    \newtheorem{prop}{Proposition}[section]
    \newtheorem{lem}{Lemma}[section]
    \newtheorem{rem}{Remark}[section]
    \newtheorem{cor}{Corollary}[section]
    \newtheorem{thm}{Theorem}[section]
    \newtheorem{defn}{Definition}
    \providecommand{\examplename}{Example}
        \newtheorem{example}{\protect\examplename}
        \theoremstyle{plain}
    \theoremstyle{plain}
    \newtheorem*{thm*}{Theorem}
    \def\bx {\boldsymbol{x}}
    \def\by {\boldsymbol{y}}
    \def\bu {\boldsymbol{u}}
    \def\bv {\boldsymbol{v}}
    \def\bp {\boldsymbol{p}}
    \def\bh {\boldsymbol{h}}
    \def\bS {\boldsymbol{S}}
    \def\R {\mathbb{R}}
    \def\dom {\mathrm{dom~}}
    \def\inter {\mathrm{int~}}
    \def\ri {\mathrm{ri~}}
    \def\cl {\mathrm{cl~}}
    \def\bd {\mathrm{bd~}}
    \def\Rn {\R^n}
    \def\gmRn {\Gamma_0(\Rn)}
    \def\Laplacian{\Delta_{\boldsymbol x}}
    \def\erfc {\rm erfc}
    \def\cballr {B_r(\boldsymbol{0})}
    \newcommand{\diff}{\mathop{}\!d} 
    \DeclareMathOperator*{\argmin}{arg\,min}
    \DeclareMathOperator*{\argmax}{arg\,max}
    \DeclareMathOperator*{\essinf}{ess\,inf}
    \newcommand{\expectationJ}[1]{\mathbb{E}_{J}\left [{#1}\right] }
    \newcommand{\normsq}[1]{\left\Vert{#1} \right\Vert_{2}^{2}}
\renewcommand*{\backref}[1]{}
\renewcommand*{\backrefalt}[4]{%
    \ifcase #1 (Not cited.)%
    \or        (Cited on page~#2.)%
    \else      (Cited on pages~#2.)%
    \fi}
\begin{document}

\title[Bayesian estimators in imaging sciences and HJ PDEs]{On Bayesian posterior mean estimators in imaging sciences and Hamilton-Jacobi Partial Differential Equations}

\date{Research supported by NSF DMS 1820821. Authors' names are given in last/family name alphabetical order.}

\author[J. Darbon]{J\'er\^ome Darbon}
\address{Division of Applied Mathematics, Brown University.}
\email{jerome\_darbon@brown.edu}

\author[G. P. Langlois]{Gabriel P. Langlois}
\address{Division of Applied Mathematics, Brown University.}
\email{gabriel\_provencher\_langlois@brown.edu}

\maketitle


\begin{abstract}
Variational and Bayesian methods are two approaches that have been widely used to solve image reconstruction problems. In this paper, we propose original connections between Hamilton--Jacobi (HJ) partial differential equations and a broad class of Bayesian methods and posterior mean estimators with Gaussian data fidelity term and log-concave prior. Whereas solutions to certain first-order HJ PDEs with initial data describe maximum a posteriori estimators in a Bayesian setting, here we show that solutions to some viscous HJ PDEs with initial data describe a broad class of posterior mean estimators. These connections allow us to establish several representation formulas and optimal bounds involving the posterior mean estimate. In particular, we use these connections to HJ PDEs to show that some Bayesian posterior mean estimators can be expressed as proximal mappings of twice continuously differentiable functions, and furthermore we derive a representation formula for these functions.
\end{abstract}

\section{Introduction} \label{sec:intro}
Image denoising problems consist in estimating an unknown image from a noisy observation in a way that accounts for the underlying uncertainties, and variational and Bayesian methods have become two important approaches for doing so. The goal of this paper is to describe a broad class of Bayesian posterior mean estimators with Gaussian data fidelity term and log-concave prior using Hamilton--Jacobi (HJ) partial differential equations (PDEs) and use these connections to clarify certain image denoising properties of this class of Bayesian posterior estimators.

To illustrate the main ideas of this paper, we first briefly introduce convex finite-dimensional variational and Bayesian methods relevant to image denoising problems. Variational methods formulate image denoising problems as the optimization of a weighted sum of a data fidelity term (which embeds the knowledge of the nature of the noise corrupting the unknown image) and a regularization term (which embeds known properties of the image to reconstruct) \cite{chambolle2016introduction,darbon2015convex}, where the goal is to minimize this sum to obtain an estimate that hopefully accounts well for both the data fidelity term and the regularization term. Bayesian methods formulate image denoising problems in a probabilistic framework that combine observed data through a likelihood function (which models the noise corrupting the unknown image) and prior knowledge through a prior distribution (which models known properties of the unknown image) to generate a posterior distribution. An appropriate decision rule that minimizes the posterior expected value of a loss function, also called a Bayes estimator, then selects a meaningful image estimate from the posterior distribution that hopefully accounts well for both the prior knowledge and observed data \cite{demoment1989image,stuart2010inverse,tarantola2005inverse,tikhonov1995numerical,vogel2002computational}. A standard example is the posterior mean estimator, which minimizes the posterior expected value of the square error from the noisy observation, and it corresponds to the mean of the posterior distribution (\cite{kay1993fundamentals}, pages 344-345).

We will focus on the following class of variational and Bayesian imaging models: given an observed image $\bx \in \Rn$ corrupted by additive Gaussian noise and parameters $t > 0$ and $\epsilon > 0$, estimate the original uncorrupted image by computing, respectively, the maximum a posteriori (MAP) and posterior mean (PM) estimates:
\begin{equation} \label{eq:minimizer_prob}  
\bu_{MAP}(\bx,t) \coloneqq \argmin_{\by\in\Rn}\left\{ \frac{1}{2t}\normsq{\bx-\by}+J(\by)\right\}
\end{equation}
and
\begin{equation} \label{eq:pm_estimate}
    \bu_{PM}(\bx,t,\epsilon) \coloneqq \frac{\int_{\Rn} \by e^{-\left(\frac{1}{2t}\normsq{\bx-\by} + J(\by)\right)/\epsilon} \diff\by}{\int_{\Rn} e^{-\left(\frac{1}{2t}\normsq{\bx-\by} + J(\by)\right)/\epsilon} \diff\by}.
\end{equation}
The functions $\by\mapsto \frac{1}{2t}\normsq{\bx-\by}$ and $J\colon\Rn\to\R\cup\{+\infty\}$ in \eqref{eq:minimizer_prob} are, respectively, the (Gaussian) data fidelity and regularization terms, and the functions $\by \mapsto e^{-\left(\frac{1}{2t}\normsq{\bx-\by} + J(\by)\right)/\epsilon}$ and $\by \mapsto e^{-J(\by)/\epsilon}$ in \eqref{eq:pm_estimate} are, respectively, the (Gaussian) likelihood function and generalized prior distribution. The parameter $t>0$ controls the relative importance of the data fidelity term over the regularization term, and the parameter $\epsilon$ controls the shape of the posterior distribution in \eqref{eq:pm_estimate}, where small values of $\epsilon$ favor configurations close to the mode, which is the MAP estimate, of the posterior distribution.

To illustrate the MAP and PM estimates and their denoising capabilities, we give an example based on the Rudin--Osher--Fatemi (ROF) image denoising model, which consists of a total variation (TV) regularization term with quadratic data fidelity term \cite{bouman1993generalized,chambolle1997image,rudin1992nonlinear}. We assume that images are defined on a lattice $\mathcal{V}$ of cardinality $|\mathcal{V}| = n$. The value of an image $\bx$ at a size $i \in \mathcal{V}$ is denoted by $x_i \in \R$. We consider specifically the finite dimension anisotropic TV term endowed with 4-nearest neighbors interactions \cite{winkler2003image}, which takes the form
\begin{equation*}
    TV(\by) = \frac{1}{2}\sum_{i \in \mathcal{V}}\sum_{j \in \mathcal{N}(i)}|y_i-y_j|.
\end{equation*}
The associated anisotropic ROF problem \cite{rudin1992nonlinear} takes the form
\begin{equation}\label{eq:rof_model}
\inf_{y_i \in \mathbb{R} \colon i \in \mathcal{V}}\left\{\sum_{i \in \mathcal{V}} \frac{1}{2t}(x_i - y_i)^2 + \frac{1}{2}\sum_{i \in \mathcal{V}}\sum_{j \in \mathcal{N}(i)}|y_i-y_j| \right\} \equiv \inf_{\by \in \Rn}\left\{\frac{1}{2t}\normsq{\bx-\by} + TV(\by) \right\}.
\end{equation}
In a Bayesian setting, the posterior mean estimate associated to the anisotropic ROF problem above is
\begin{equation}\label{eq:pm_tv}
\bu_{PM}(\bx,t,\epsilon) = \frac{\int_{\Rn} \by e^{-\left(\frac{1}{2t}\normsq{\bx-\by} + TV(\by)\right)/\epsilon} \diff \by}{\int_{\Rn} e^{-\left(\frac{1}{2t}\normsq{\bx-\by} + TV(\by)\right)/\epsilon} \diff\by}.
\end{equation}
Figure~\ref{fig:original_images}(A) depicts the image Barbara, which we corrupt with Gaussian noise (zero mean with standard deviation $\sigma = 20$ with pixel values in $[0,255]$) in Figure~\ref{fig:original_images}(B). The resultant noisy image $\bx$ is denoised by computing the minimizer $\bu_{MAP}(\bx,t)$ to the ROF model~\eqref{eq:rof_model} with $t = 20$ and is illustrated in Figure~\ref{fig:original_images}(C), and the posterior mean estimate $\bu_{PM}(\bx,t,\epsilon)$~\eqref{eq:pm_tv} associated to the ROF model~\eqref{eq:rof_model} with $t = \epsilon = 20$ and illustrated in Figure~\ref{fig:original_images}. Figure 1(C) illustrates the denoised image using the minimizer to the TV problem~\eqref{eq:rof_model}, and Figure 1(D) illustrates the denoised image using the posterior mean estimate~\eqref{eq:pm_tv}.

\begin{figure}[ht]
    \centering
\begin{subfigure}{0.55\textwidth}
    \centering
    \includegraphics[width = 0.60\textwidth]{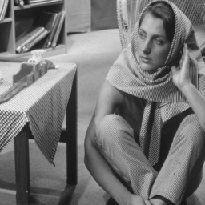}
    \caption{}
    \label{subfig:lena}
\end{subfigure}%
\begin{subfigure}{0.55\textwidth}
    \centering
    \includegraphics[width = 0.60\textwidth]{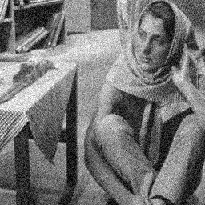}
    \caption{}
    \label{subfig:lena_noisy}
\end{subfigure}

\begin{subfigure}{0.55\textwidth}
    \centering
    \includegraphics[width = 0.60\textwidth]{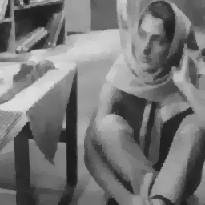}
    \caption{}
    \label{subfig:lena_denoised_umap}
\end{subfigure}%
\begin{subfigure}{0.55\textwidth}
    \centering
    \includegraphics[width = 0.60\textwidth]{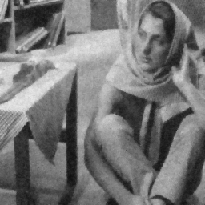}
    \caption{}
    \label{subfig:lena_denoised_upm}
\end{subfigure}
    \caption{The anisotropic ROF model for image denoising applied to the noisy grayscale 256x256 image of Barbara.  The noise is Gaussian with zero mean and standard deviation $\sigma = 20$, and the parameters of the ROF model~\eqref{eq:rof_model} and posterior mean estimate~\eqref{eq:pm_tv} are $t=20$ and $\epsilon=20$. (A) Original image of Barbara. (B) Noisy image of Barbara. (C) Denoised image of Barbara with the MAP estimate. (D) Denoised image of Barbara with PM estimate.}
    \label{fig:original_images}
\end{figure}

Variational methods are popular because the resultant optimization problem for various non-smooth and convex regularization terms used in image denoising problems, such as total variation and $l_1$-norm based regularization terms, is well-understood \cite{bouman1993generalized,candes2006robust,chambolle1997image,darbon2015convex,darbon2019decomposition,donoho2006compressed,rudin1992nonlinear} and can be solved efficiently using robust numerical optimization methods \cite{chambolle2016introduction}. These methods may have shortcomings, however, in that reconstructed images from variational methods with non-smooth and convex regularization terms may have undesirable and visually unpleasant staircasing effects due to the singularities of the non-smooth regularization terms \cite{chan2000high,dobson1996recovery,durand2000image,nikolova2007model,louchet2008modeles,woodford2009global}. This is illustrated in the example above in Figure 1(C), which contains regions where the pixel values are equal and lead to staircasing effects. While posterior mean estimates are typically slower to compute than MAP estimates, posterior mean estimates with Gaussian fidelity term and total variation regularization terms have been shown to avoid staircasing effects \cite{louchet2008modeles,louchet2013posterior}. This is illustrated for example in Figure 1(D), where the denoised image with posterior mean estimate does not contain visibly substantial regions where the pixel values are equal.

Several papers have proposed original connections between MAP and Bayesian estimators, including posterior mean estimators. First, the papers of \cite{louchet2008modeles,gribonval2011should,gribonval2013reconciling,louchet2013posterior} showed that the class of Bayesian posterior mean estimates \eqref{eq:pm_estimate} can be expressed as minimizers to optimization problems involving a Gaussian fidelity term and a smooth convex regularization term, i.e., there exists a smooth regularization term $f_{\rm{reg}}\colon \Rn \to \R$ such that
\begin{equation}\label{eq:pm_rep_map}
\bu_{PM}(\bx,t,\epsilon) = \argmin_{\by \in \Rn} \left\{\frac{1}{2}\normsq{\bx-\by} + f_{\rm{reg}}(\by)\right\}.
\end{equation}
This result was later extended to some non-Gaussian data fidelity terms in \cite{gribonval2018characterization,gribonval2018bayesian}. These results, however, proved existence of the regularization term $f_{\rm{reg}}$ and not their explicit form. Second, the papers of \cite{burger2014maximum,burger2016bregman,pereyra2019revisiting} showed that the MAP estimate~\eqref{eq:minimizer_prob} can, under certain assumptions on the regularization term $J$, be characterized as a proper Bayes estimator, that is, the MAP estimate~\eqref{eq:minimizer_prob} minimizes the posterior expected value of an appropriate loss function.

In addition to these results, it is known that under certain assumptions on the regularization term $J$, the value of the minimization problem
\begin{equation} \label{eq:minimization_prob} 
S_{0}(\bx,t)\coloneqq\min_{\by\in\Rn}\left\{ \frac{1}{2t}\normsq{\bx-\by}+J(\by)\right\}
\end{equation}
whose minimizer is the MAP estimate~\eqref{eq:minimizer_prob}, satisfies the first-order HJ PDE 
\begin{equation}
\begin{dcases}
\frac{\partial S_0}{\partial t}(\bx,t)+\frac{1}{2}\normsq{\nabla_{\bx}S_0(\bx,t)}=0 & \mbox{in }\Rn\times(0,+\infty),\\
S_0(\bx,0)=J(\bx) & \mbox{in }\Rn.
\end{dcases}
\end{equation}
The properties of the minimizer $\bu_{MAP}(\bx,t)$ follow from the properties of the solution to this HJ equation~\cite{hiriart2013convexII,moreau1965proximite,rockafellar2009variational,darbon2015convex}. In particular, the MAP estimate satisfies the representation formula $\bu_{PM}(\bx,t) = \bx - t\nabla_{\bx}S_0(\bx,t)$. Similar results have been established for finite-dimensional variational models in imaging sciences \cite{darbon2019decomposition}.

The connections between Bayesian posterior mean estimates and MAP estimates as identified by \cite{louchet2008modeles,gribonval2011should,gribonval2013reconciling,louchet2013posterior,gribonval2018characterization,gribonval2018bayesian}, the connections between MAP estimates and Bayesian estimators as identified by \cite{burger2014maximum,burger2016bregman,pereyra2019revisiting}, and the connections between MAP estimates and HJ PDEs suggest there may be deep connections between Bayesian estimators, including posterior mean estimates, and HJ PDEs. This paper proposes to establish original connections between Bayesian posterior mean estimates and HJ PDEs. We shall see in this paper that under appropriate conditions on the regularization term $J$, the posterior mean estimate~\eqref{eq:pm_estimate} is described by the solution to a viscous HJ PDE with initial data $J$. Specifically, the function $S_\epsilon\colon\Rn\times[0,+\infty)\to[0,+\infty)$ defined by
\begin{equation}\label{eq:intro_Seps}
S_{\epsilon}(\bx,t) \coloneqq -\epsilon\ln\left(\frac{1}{\left(2\pi t\epsilon\right)^{n/2}}\int_{\Rn}e^{-\left(\frac{1}{2t}\left\Vert \bx-\by\right\Vert _{2}^{2}+J(\by)\right)/\epsilon}\diff\by\right)
\end{equation}
solves the viscous HJ equation with initial data
\begin{equation}
\begin{dcases}\label{intro:pde}
\frac{\partial S_{\epsilon}}{\partial t}(\bx,t)+\frac{1}{2}\left\Vert \nabla_{\bx}S_{\epsilon}(\bx,t)\right\Vert _{2}^{2}=\frac{\epsilon}{2}\Laplacian S_{\epsilon}(\bx,t) & \text{ in }\Rn\times(0,+\infty),\\
S_{\epsilon}(\bx,0)=J(\bx) & \text{ in }\Rn,
\end{dcases}
\end{equation}
and the PM estimate satisfies the representation formula
\begin{equation}
\bu_{PM}(\bx,t,\epsilon)=\bx-t\nabla_{\bx}S_{\epsilon}(\bx,t).
\end{equation}
Moreover, we shall see that the connections between the posterior mean estimate~\eqref{eq:pm_estimate} and viscous HJ PDE~\eqref{intro:pde} will allow us to find the exact representation of the regularization term $f_{\rm{reg}}$ in \eqref{eq:pm_rep_map}. These connections will also enable us to show that when the regularization function $J$ is convex on $\Rn$, with no further regularity such as continuous differentiability or uniform Lipschitz continuity, then the MAP estimate is also a proper Bayes estimator, i.e., it minimizes the posterior expected value of a loss function.

\subsection{Contributions} \label{subsec:intro_contributions}
In this paper, we propose original connections between solutions to HJ PDEs and a broad class of Bayesian methods and posterior mean estimators. These connections are described in Theorems~\ref{thm:visc_hj_eqns} and~\ref{thm:char_opt_sol} for viscous HJ PDEs and first-order HJ PDEs, respectively. We show in Theorem~\ref{thm:visc_hj_eqns} that the posterior mean estimate $\bu_{PM}(\bx,t,\epsilon)$ is described by the solution to the viscous HJ PDE~\eqref{intro:pde} with initial data corresponding to the convex regularization term $J$, which we characterize in detail in terms of the data $\bx$ and parameters $t$ and $\epsilon$. In particular, the posterior mean estimate satisfies the representation formula $\bu_{PM}(\bx,t,\epsilon) = \bx - t\nabla_{\bx}S_\epsilon(\bx,t)$. Next, we use the connections between viscous HJ PDEs and posterior mean estimates established in Theorem~\ref{thm:visc_hj_eqns} to show in Theorem~\ref{thm:char_opt_sol} that the posterior mean estimate $\bu_{PM}(\bx,t,\epsilon)$ can be expressed through the gradient of the solution to a first-order HJ PDE with twice continuously differentiable convex initial data $\Rn \ni \bx \mapsto K^{*}_{\epsilon}(\bx,t)-\frac{1}{2}\Vert \bx \Vert_{2}^{2}$, where
\[
K_{\epsilon}(\bx,t) = t\epsilon\ln\left(\frac{1}{(2\pi t\epsilon)^{n/2}}\int_{\dom J}e^{\left(\frac{1}{t}\left\langle \bx,\by\right\rangle -\frac{1}{2t}\left\Vert \by\right\Vert _{2}^{2}-J(\by)\right)/\epsilon}\diff\by\right)
\]
and $\bx \mapsto  K^{*}_{\epsilon}(\bx,t)$ is the Fenchel--Legendre transform of the function $\bx \mapsto K_{\epsilon}(\bx,t)$. In other words, we show
\[
\bu_{PM}(\bx,t,\epsilon) = \argmin_{\by \in \Rn}\left\{\frac{1}{2}\normsq{\bx-\by} + \left( K^{*}_{\epsilon}(\by,t)-\frac{1}{2}\Vert \by \Vert_{2}^{2}\right)\right\}.
\]
This formula gives the representation of the convex regularization term enabling one to express the posterior mean estimate as the minimizer of a convex variational problem, and in fact in terms of the solution to a first-order HJ PDE, thereby extending the results of \cite{louchet2008modeles,gribonval2011should} who showed existence of this regularization term when the data fidelity term is Gaussian, but not its representation. The twice continuously differentiability of this regularization term, in particular, implies that the posterior mean estimate $\bu_{PM}(\bx,t,\epsilon)$ avoids image denoising staircasing effects as a consequence of the results derived by \citet{nikolova2004weakly} (specifically, by Theorem 3 in her paper).

We also use the connections between posterior mean estimators and solutions to viscous HJ PDEs established in Theorem~\ref{thm:visc_hj_eqns} to prove several topological, representation, and monotonicity properties of posterior mean estimators, respectively, in Propositions~\ref{prop:topo_properties}, \ref{prop:pm_rep_props}, and \ref{prop:mono_properties}. These properties are then used in Proposition~\ref{prop:bounds_limits} to derive an optimal upper bound on the mean squared error (MSE) error $\expectationJ{\normsq{\by-\bu_{PM}(\bx,t,\epsilon)}}$, several estimates on the MAP and posterior mean estimates, and the behavior of the posterior mean estimate $\bu_{PM}(\bx,t,\epsilon)$ in the limit $t \to 0$. 
Finally, we use the connections between both MAP and posterior mean estimates and HJ PDEs to characterize the MAP estimate~\eqref{eq:minimizer_prob} in the context of Bayesian estimation theory, and specifically in Theorem~\ref{thm:bregman_div} to show that the MAP estimate~\eqref{eq:minimizer_prob} corresponds to the Bayes estimator of the Bayesian risk~\eqref{eq:breg2} whenever $J$ is convex on $\Rn$. Under the assumption that the data fidelity term is Gaussian, our result extends the findings of \cite{burger2014maximum} and \cite{pereyra2019revisiting} by removing the restriction on $J$ to be uniformly Lipschitz continuous on $\Rn$. When $J$ is not defined everywhere on $\Rn$, we show that the Bayesian risk~\eqref{eq:breg2} has a corresponding Bayes estimator that is described in terms of the solution to both the first-order HJ PDE~\eqref{thm:e_u_nonvisc_hj} and the viscous HJ PDE~\eqref{thm:visc_hj_eqns}.

\subsection{Organization}\label{subsec:intro_organization}
In Section \ref{sec:background}, we review concepts of real and convex analysis that will be used throughout this paper. In Section~\ref{sec:connections}, we establish theoretical connections between a broad class of Bayesian posterior mean estimators and HJ PDEs. Our mathematical set-up is described in Subsection~\ref{subsec:set-up}, the connections of posterior mean estimators to viscous HJ PDEs are described in Subsection~\ref{subsec:HJ_eqns_visc}, and the connections of posterior mean estimators to first-order HJ PDEs are described in Subsection~\ref{subsec:HJ_eqns_1order}. We use these connections to establish various properties of posterior mean estimators in Section~\ref{sec:properties_estimators}. Specifically, we establish topological, representation, and monotonicity properties of posterior mean estimators in Subsection~\ref{subsec:prop_topo-rep-mono}, an optimal upper bound for the MSE, various estimates and bounds involving the posterior mean estimate, and the behavior of the posterior mean estimate in the limit $t \to 0$ in Subsection~\ref{subsec:prop_bounds-est}. Finally, we establish properties of MAP and posterior mean estimators in terms of Bayesian risks involving Bregman divergences in Subsection~\ref{subsec:prop_breg_div}.
\section{Background} \label{sec:background}
This section reviews concepts from real and convex analysis that will be used in this paper. We refer the reader to \cite{folland2013real,hiriart2013convexI,hiriart2013convexII,rockafellar1970convex,rockafellar2009variational} for comprehensive references. In what follows, the Euclidean scalar product on $\Rn$ will be denoted by $\left\langle \cdot,\cdot\right\rangle $ and its associated norm by $\left\Vert \cdot\right\Vert _{2}$. The closure and interior of a non-empty set $C\subset\Rn$ will be denoted by $\cl C$ and $\inter C$, respectively. The boundary of a non-empty set $C\subset\Rn$ is defined as $\cl C \setminus \inter C$ and will be denoted by $\bd C$. The domain of a function $f\colon\Rn\to\R\cup\{+\infty\}$ is the set
\[
\dom f = \left\{ \bx\in\Rn : f(\bx)<+\infty\right\}.
\]
We will denote the Borel $\sigma$-algebra on $\Rn$ by $\mathcal{B}(\Rn)$, and if given a Borel-measurable set $\Omega \subset \mathcal{B}(\Rn)$ and a Lebesgue measurable function $f\colon \Rn \to \R$, we denote the Lebesgue integral of $f$ over $\Omega$ by
\[
\int_{\Omega} f(\by) \diff\by,
\]
where $d\by$ denotes the $n$-dimensional Lebesgue measure.

\begin{defn}[Proper and lower semicontinuous functions] \label{def:domains_prop}
A function $f\colon\Rn\to\R\cup\{+\infty\}$ is proper if $\dom f \neq \varnothing$ and $f(\bx) > -\infty$ for every $\bx \in \dom f$. 

A function $f\colon\Rn\to\R\cup\{+\infty\}$ is lower semicontinuous at $\bx\in\Rn$ if it satisfies $\liminf_{k\to+\infty}f(\bx_{k})\geqslant f(\bx)$ for every sequence $\left\{ \bx_{k}\right\} _{k=1}^{+\infty}\subset\Rn$ such that $\lim_{k\to+\infty}\bx_{k}=\bx$.
\end{defn}
\begin{defn}[Differentiability and the gradient] \label{def:diff_grad}
Let $f\colon \Rn \to \R\cup\{+\infty\}$ be a proper function and let $\bx \in \Rn$ be a point where $f$ is finite. The function $f$ is \textit{differentiable} at $\bx$ if there exists a linear form $Df(\bx) \colon \Rn\to\R$ such that
\[
\lim_{\left\Vert \bh \right\Vert_{2} \to 0}\frac{\left\Vert f(\bx+\bh)-f(\bx)-Df(\bx)(\bh) \right\Vert_{2}}{\left\Vert \bh \right\Vert_{2}} = 0.
\]
The linear form $Df(\bx)$, if it exists, can be represented by a unique vector in $\Rn$ denoted by $\nabla f(\bx)$ through $Df(\bx)(\bh) = \left \langle \nabla f(\bx),\bh \right\rangle$ for every $\bh \in \Rn$. The element $\nabla f(\bx)$ is called the gradient of $f$ at $\bx$.
\end{defn}
\begin{defn}[Convex sets and their relative interiors] \label{def:convex_sets}
A subset $C\subset\Rn$ is convex if for every pair $(\bx,\by) \in C \times C$ and every scalar $\lambda \in (0,1)$, the line segment $\lambda\bx+(1-\lambda)\by \in C$.

The relative interior of a convex set $C$, denoted by $\ri(C)$, is the set of points in the interior of the unique smallest affine set containing $C$. Every convex set $C$ with non-empty interior is $n$-dimensional with $\ri C=\inter C$ and has positive Lebesgue measure, and furthermore the Lebesgue measure of the boundary $\bd C$ equals zero \citep{lang1986note}.
\end{defn}
\begin{defn}[Convex functions and the set $\gmRn$] \label{def:convex} 
A proper function $f\colon\Rn\to\R\cup\{+\infty\}$ is convex if its domain is convex and if for every pair $(\bx,\by) \in \dom f \times \dom f$ and every scalar $\lambda\in(0,1)$, the inequality
\begin{equation}
f(\lambda\bx+(1-\lambda)\by)\leqslant\lambda f(\bx)+(1-\lambda)f(\by)\label{eq:convex_def}
\end{equation}
holds in $\R\cup\{+\infty\}$.

The class of proper, convex and lower semicontinuous functions is denoted by $\gmRn$.

A proper function $f$ is \textit{strictly convex} if the inequality is strict in (\ref{eq:convex_def}) whenever $\bx\neq\by$, and it is \textit{strongly convex with parameter $m>0$} if for every pair $(\bx,\by) \in \dom f \times \dom f$ and every scalar $\lambda\in(0,1)$, the inequality
\[
f(\lambda\bx+(1-\lambda)\by)\leqslant\lambda f(\bx)+(1-\lambda)f(\by)-\frac{m}{2}\lambda(1-\lambda)\left\Vert \bx-\by\right\Vert _{2}^{2}.
\]
holds in $\R\cup\{+\infty\}$. 

A function $g\colon\Rn\to(0,+\infty)$ is log-concave (respectively, \textit{strictly log-concave}, \textit{strongly log-concave of parameter} $m>0$) if the function $-\ln(g)$ is convex (respectively strictly convex, strongly convex of parameter $m>0$).
\end{defn}
\begin{defn}[Projections] \label{def:projections} 
Let $C$ be a closed convex subset of $\Rn$. To every $\bx \in \Rn$, there exists a unique element $\pi_{C}(\bx) \in C$ called the projection of $\bx$ onto $C$ that is closest to $\bx$ in Euclidean norm, i.e.,
\begin{equation} \label{eq:proj_def}
\pi_{C}(\bx)\coloneqq\argmin_{\by\in C}\left\Vert \bx-\by\right\Vert _{2}^{2}.
\end{equation}
This correspondence defines a map $\bx \mapsto \pi_{C}(\bx)$ from $\Rn$ to $C$ called the projector onto $C$ (\cite{aubin2012differential}, Chapter 0.6, Corollary 1). It satisfies the following characterization:
\begin{equation} \label{eq:proj_char}
\left\langle \bx-\pi_{C}(\bx),\by-\pi_{C}(\bx)\right\rangle \leqslant0, \quad\forall\by\in C.
\end{equation}
\end{defn}
\begin{defn}[Subdifferentials and subgradients] \label{def:subgrad} 
Let $f \in \gmRn$. The \textit{subdifferential} of $f$ at $\bx\in\dom f$ is the set $\partial f(\bx)$ of vectors $\bp\in\Rn$ that for every $\by \in \Rn$ satisfies the inequality
\begin{equation} \label{eq:subgrad_def}
f(\by)\geqslant f(\bx)+\left\langle \bp,\by-\bx\right\rangle.
\end{equation}
The vectors $\bp\in\partial f(\bx)$ are called the subgradients of $f$ at $\bx$. The set of points $\bx\in\dom f$ for which the subdifferential $\partial f(\bx)$ is non-empty is denoted by $\dom\partial f$. and it includes the relative interior of the domain of $f$, that is, $\ri(\dom f) \subset \dom\partial f$ (\citep{rockafellar1970convex}, Theorem 23.4). 

If $f$ is strongly convex of parameter $m > 0$, then the subgradients of $f$ at $\bx \in \dom \partial f$ satisfy the stronger inequality
\[
f(\by)\geqslant f(\bx)+\left\langle \bp,\by-\bx\right\rangle + \frac{m}{2}\normsq{\by-\bx}.
\]

If $f$ is differentiable at $\bx$, then $\bx \in \dom\partial f$ and the gradient $\nabla f(\bx)$ is the unique subgradient of $f$ at $\bx$, and conversely if $f$ has a unique subgradient at $\bx$, then $f$ is differentiable at that point (\citep{rockafellar1970convex}, Theorem 25.1).

The set-valued subdifferential mapping $\dom \partial f \ni \bx \mapsto \partial f(\bx)$ satisfies two important properties that will be used in this paper. First, it is monotone in that if $f$ is strongly convex of parameter $m \geqslant 0$ for every pair $(\bx_{0},\bx_{1}) \in \dom\partial f \times \dom\partial f$ and $\bp_{0}\in\partial f(\bx_{0})$, $\bp_{1}\in\partial f(\bx_{1})$, the following inequality holds (\cite{rockafellar1970convex}, page 240 and Corollary 31.5.2):
\begin{equation} \label{eq:monotone_mapping}
\left\langle \bp_{0}-\bp_{1},\bx_{0}-\bx_{1}\right\rangle \geqslant m\normsq{\bx_0-\bx_1}.
\end{equation}
Second, for every $\bx \in \dom\partial f$ the subdifferential $\partial f(\bx)$ is a closed convex set, and as a consequence, the mapping $\dom\partial f \ni \bx \mapsto \pi_{\partial f(\bx)}(\boldsymbol{0})$ selects the subgradient of the minimal norm in $\partial f(\bx)$ and defines a function continuous almost everywhere on $\dom\partial f$, which is a consequence of the fact that this mapping agrees with the gradient of $f$ over the set of points in $\inter{\dom J}$ at which $f$ is differentiable (\cite{rockafellar1970convex}, Theorem~25.5).
\end{defn}
\begin{defn}[Fenchel--Legendre transform] \label{def:legendre_t} 
Let $f\in\gmRn$. The \textit{Fenchel--Legendre transform} $f^{*}\colon\Rn\to\mathbb{R\cup}\{+\infty\}$ of $f$ is defined by
\begin{equation}
f^{*}(\bp)=\sup_{\bx\in\Rn}\left\{ \left\langle \bp,\bx\right\rangle -f(\bx)\right\} .\label{eq:fenchel_t_def}
\end{equation}
For every $f\in\gmRn$, the mapping $f\mapsto f^{*}$ is one-to-one, $f^{*}\in\gmRn$, and $(f^{*})^{*}=f$. Moreover, for every $\bx\in\Rn$ and $\bp\in\Rn$, $f$ and $f^*$ satisfy Fenchel's inequality
\begin{equation} \label{eq:fenchel_ineq}
f(\bx)+f(\bp)\geqslant\left\langle \bx,\bp\right\rangle,
\end{equation}
where equality holds if and only if $\bp\in\partial f(\bx)$, if and only if $\bx\in\partial f^{*}(\bp)$ (\citep{hiriart2013convexII}, corollary 1.4.4). If $f$ is also differentiable, the supremum in (\ref{eq:fenchel_t_def}) is attained whenever there exists $\bx\in\Rn$ such that $\bp=\nabla f(\bx)$. 

\end{defn}
\begin{defn}[Infimal convolutions] \label{def:inf_conv}
Let $f_{1}\in\gmRn$ and $f_{2}\in\gmRn$. The \textit{infimal convolution} of $f_{1}$ and $f_{2}$ is the function
\begin{equation}\label{eq:infconv_def}
\Rn \ni \bx \mapsto (f_{1} \Box f_{2})(\bx) = \inf_{\bx_{1}+\bx_{2}=\bx}\left\{ f_{1}(\bx_{1})+f_{2}(\bx_{2})\right\}.
\end{equation}
The infimal convolution is exact if the infimum is attained at $\bx_{1}\in\dom f_{1}$ and $\bx_{2}\in\dom f_{2}$, and in that case the infimum in \eqref{eq:infconv_def} can be replaced by a minimum. The Fenchel--Legendre transform of the infimal convolution (\ref{eq:infconv_def}) is the sum of their respective Fenchel--Legendre transforms (\citep{rockafellar1970convex}, Theorem 16.4), that is,
\[
\left(f_{1}\Box f_{2}\right)^{*}(\bp)=f_{1}^{*}(\bp)+f_{2}^{*}(\bp).
\]

If $f \in \gmRn$, then Moreau's Theorem \cite{moreau1965proximite,hiriart1989moreau} asserts that
\[
\frac{1}{2}\normsq{\cdot}\Box f + \frac{1}{2}\normsq{\cdot}\Box f^* = \frac{1}{2}\normsq{\cdot},
\]
and conversely, if $g$ and $h$ are two convex functions on $\Rn$ such that $g + h = \frac{1}{2}\normsq{\cdot}$, then there exists a unique function $F \in \gmRn$ such that
\[
g = \frac{1}{2}\normsq{\cdot}\Box F,\, \mbox{and }\, h = \frac{1}{2}\normsq{\cdot}\Box F^*,
\]
where $F(\bx) = h^*(\bx) - \frac{1}{2}\normsq{\bx}$ for every $\bx \in \Rn$, and
moreover $g$ and $f$ are continuously differentiable and
\[
\nabla g(\bx)\in\partial F(\nabla h(\bx))\quad\mbox{and}\quad\nabla h(\bx)\in\partial F^{*}(\nabla g(\bx)).
\]
\end{defn}
\begin{defn}[Bregman divergences] \label{def:breg_div} 
Let $f\in\gmRn$. The \textit{Bregman divergence} of $f$ is the function $D_{f}\colon\Rn\times\Rn\to\R\cup\{+\infty\}$ defined by
\begin{equation}
D_{f}(\bx,\bp)=f(\bx)-\left\langle \bp,\bx\right\rangle +f^{*}(\bp).\label{eq:breg_good_def}
\end{equation}
It satisfies $D_{f}(\bx,\bp)\geqslant0$ for every $\bx\in\Rn$ and $\bp\in\Rn$ by Fenchel's inequality (\ref{eq:fenchel_ineq}), with $D_{f}(\bx,\bp)=0$ whenever $\bp\in\partial f(\bx)$. It also satisfies $D_{f}(\bx,\bp)=D_{f^{*}}(\bp,\bx)$, with $D_{f}(\bx,\bp)=D_{f}(\bp,\bx)$ if and only if $f$ is the quadratic $f=\frac{1}{2}\left\Vert \cdot\right\Vert _{2}^{2}$.
\end{defn}
\begin{defn}[Moreau--Yosida envelopes and proximal mappings]\label{def:moreau_yosida}
Let $t > 0$, and $J \in \gmRn$. The functions
\begin{equation}
\bx\mapsto \inf_{\by\in\Rn}\left\{ \frac{1}{2t}\normsq{\bx-\by}+J(\by)\right\} 
\end{equation}
and
\begin{equation}
\bx\mapsto \argmin_{\by\in\Rn}\left\{ \frac{1}{2t}\normsq{\bx-\by}+J(\by)\right\}
\end{equation}
are called the Moreau--Yosida envelope and proximal mapping of $J$, respectively \cite{hiriart2013convexII,moreau1965proximite,rockafellar2009variational}. Their properties have been extensively studied in convex and functional analysis, and they form the basis for the mathematical analysis of the convex variational imaging model \eqref{eq:minimization_prob} and corresponding minimizer \eqref{eq:minimizer_prob}~\cite{darbon2015convex}. The following theorem describes the behavior of both the solution to the infimum problem \eqref{eq:minimization_prob} and its corresponding minimizer \eqref{eq:minimizer_prob}, and it shows in particular that for any observed image $\bx\in\Rn$ and parameter $t>0$, the imaging problem \eqref{eq:minimization_prob} has always a unique solution. The readers may refer to \citet{darbon2015convex} for more details.
\begin{thm} \label{thm:e_u_nonvisc_hj}
Suppose $J$ is a proper, lower semicontinuous and convex function, i.e., $J\in\gmRn$. Then the following statements hold.
\begin{enumerate}
\item[(i)]  The unique continuously differentiable and convex function $S_0\colon\Rn\times[0,+\infty)\to\R$ that satisfies the first-order Hamilton--Jacobi equation with initial data
\begin{equation}
\begin{dcases}
\frac{\partial S_0}{\partial t}(\bx,t)+\frac{1}{2}\normsq{\nabla_{\bx}S_0(\bx,t)}=0 & \mbox{in }\Rn\times(0,+\infty),\\
S_0(\bx,0)=J(\bx) & \mbox{in }\Rn,
\end{dcases}\label{eq:hj_non_visc_pde}
\end{equation}
is defined by
\begin{align}
S_0(\bx,t) & =\left(\frac{t}{2}\normsq{\cdot}+J^{*}\right)^{*}(\bx)\quad \ \ (\mbox{Hopf formula})\label{eq:hopf_formula}\\
& = \left((\frac{t}{2}\normsq{\cdot})^* \Box J\right)(\bx) \qquad(\mbox{Lax--Oleinik formula}) \\
 & =\inf_{\by\in\Rn}\left\{ \frac{1}{2t}\normsq{\bx-\by}+J(\by)\right\} .\label{eq:lax_formula}
\end{align}
Furthermore, for every $\bx\in\dom J$, sequence $\{t_k\}_{k=1}^{+\infty}$ of positive real numbers converging to $0$, and sequence $\{\boldsymbol{d}_k\}_{k=1}^{+\infty}$ of vectors converging to $\boldsymbol{d}\in\Rn$, the pointwise limit $S_0(\bx+t_k\boldsymbol{d}_k,t_k)$ as $k \to +\infty$ exists and satisfies
\[
\lim_{k \to +\infty} S_0(\bx+t_k\boldsymbol{d}_k,t_k)=J(\bx).
\]
\item[(ii)]  For every $\bx\in\Rn$ and $t>0$, the infimum in (\ref{eq:lax_formula}) exists and is attained at a unique point $\bu_{MAP}(\bx,t)\in\dom\partial J$. In addition, the minimizer $\bu_{MAP}(\bx,t)$ satisfies the formula
\begin{equation} \label{eq:grad_map_rep}
\bu_{MAP}(\bx,t)=\bx-t\nabla_{\bx}S_0(\bx,t),
\end{equation}
and $\left(\frac{\bx-\bu_{MAP}(\bx,t)}{t}\right)\in\partial J(\bu_{MAP}(\bx,t))$.
\item[(iii)] For every $\bx \in \dom J$, the pointwise limit of $\bu_{MAP}(\bx,t)$ as $t\to0$ exists and satisfies
\[
\lim_{\substack{t\to0\\t>0}} \bu_{MAP}(\bx,t)=\bx.
\]

\item[(iv)] Let $\bx \in \dom\partial J$ and let $\{t_k\}_{k=1}^{+\infty}$ be a sequence of positive real numbers converging to zero. Then the limit of $\nabla_{\bx}S_0(\bx,t_k)$ as $k\to +\infty$ exists and satisfies
\begin{equation} \label{eq:grad_limit_subdiff}
    \lim_{k\to +\infty}\nabla_{\bx}S_0(\bx,t_k) = \pi_{\partial J(\by)}(\boldsymbol{0}).
\end{equation}
\end{enumerate}
\end{thm}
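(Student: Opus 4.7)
The four statements amount to a careful packaging of the classical Moreau--Yosida regularization theory for $J \in \gmRn$ together with the fact that the Moreau--Yosida envelope solves the first-order Hamilton--Jacobi equation. I would treat (i) and (ii) jointly through Moreau's theorem (recorded in Definition~\ref{def:inf_conv}) and an envelope-type differentiation in $t$, and then deduce (iii) and (iv) from (i)--(ii) using lower semicontinuity of $J$ and the monotonicity of its subdifferential, inequality~(\ref{eq:monotone_mapping}).

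\textbf{Parts (i) and (ii).} The three representations of $S_0$ are immediate from Fenchel calculus: since $\left(\tfrac{t}{2}\normsq{\cdot}\right)^{*} = \tfrac{1}{2t}\normsq{\cdot}$, the Lax--Oleinik expression reduces to the explicit infimum, and taking the Fenchel transform of both sides and applying the identity $(f_{1} \Box f_{2})^{*} = f_{1}^{*} + f_{2}^{*}$ yields the Hopf formula. Because $\by \mapsto \tfrac{1}{2t}\normsq{\bx - \by} + J(\by)$ is strongly convex with parameter $1/t$, it admits a unique minimizer $\bu_{MAP}(\bx, t)$, whose first-order optimality condition reads $(\bx - \bu_{MAP})/t \in \partial J(\bu_{MAP})$, giving (ii) together with $\bu_{MAP} \in \dom\partial J$. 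Applying Moreau's theorem to the decomposition $\tfrac{1}{2}\normsq{\cdot}\Box(tJ) + \tfrac{1}{2}\normsq{\cdot}\Box(tJ)^{*} = \tfrac{1}{2}\normsq{\cdot}$ (obtained by rescaling $J \mapsto tJ$) shows that $tS_0(\cdot, t) = \tfrac{1}{2}\normsq{\cdot}\Box(tJ)$ is continuously differentiable and convex on $\Rn$ with gradient $\bx - \bu_{MAP}(\bx, t)$, yielding $\nabla_{\bx} S_0(\bx, t) = (\bx - \bu_{MAP}(\bx, t))/t$. A standard envelope argument then gives $\tfrac{\partial S_0}{\partial t}(\bx, t) = -\tfrac{1}{2t^{2}}\normsq{\bx - \bu_{MAP}(\bx, t)} = -\tfrac{1}{2}\normsq{\nabla_{\bx} S_0(\bx, t)}$, verifying the PDE. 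Uniqueness within the class of convex $C^{1}$ solutions follows from classical Hamilton--Jacobi theory via the characteristic structure of the equation. Finally, the initial-time limit $S_0(\bx + t_{k} \boldsymbol{d}_{k}, t_{k}) \to J(\bx)$ for $\bx \in \dom J$ is sandwiched between the upper bound obtained by testing $\by = \bx$ in the infimum, namely $S_0(\bx + t_{k}\boldsymbol{d}_{k}, t_{k}) \leqslant \tfrac{t_{k}}{2}\normsq{\boldsymbol{d}_{k}} + J(\bx)$, and the matching lower bound from $S_0 \geqslant J(\bu_{MAP})$, the lower semicontinuity of $J$, and the convergence $\bu_{MAP}(\bx + t_{k}\boldsymbol{d}_{k}, t_{k}) \to \bx$ established in (iii) (extended to nonzero $\boldsymbol{d}_{k}$ by the same reasoning).

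\textbf{Parts (iii) and (iv).} For (iii), testing the infimum at $\by = \bx$ gives $\tfrac{1}{2t}\normsq{\bx - \bu_{MAP}(\bx, t)} + J(\bu_{MAP}(\bx, t)) \leqslant J(\bx)$; combining this with an affine minorant $J \geqslant \left\langle \bp_{0}, \cdot\right\rangle + c_{0}$ (which exists since $J \in \gmRn$) and Cauchy--Schwarz forces $\normsq{\bx - \bu_{MAP}(\bx, t)} = O(t)$, whence $\bu_{MAP}(\bx, t) \to \bx$. For (iv), the key observation is that $\nabla_{\bx} S_0(\bx, t_{k}) \in \partial J(\bu_{MAP}(\bx, t_{k}))$. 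Applying the monotonicity inequality~(\ref{eq:monotone_mapping}) with $\bp_{0} = \nabla_{\bx} S_0(\bx, t_{k})$, $\bx_{0} = \bu_{MAP}(\bx, t_{k})$, and any fixed $\bp_{1} = \bp \in \partial J(\bx)$, $\bx_{1} = \bx$, together with $\bu_{MAP}(\bx, t_{k}) - \bx = -t_{k} \nabla_{\bx} S_0(\bx, t_{k})$, produces $\normsq{\nabla_{\bx} S_0(\bx, t_{k})} \leqslant \left\langle \bp, \nabla_{\bx} S_0(\bx, t_{k})\right\rangle \leqslant \left\Vert\bp\right\Vert_{2}\left\Vert\nabla_{\bx} S_0(\bx, t_{k})\right\Vert_{2}$, so $\left\Vert\nabla_{\bx} S_0(\bx, t_{k})\right\Vert_{2} \leqslant \left\Vert\bp\right\Vert_{2}$ for every $\bp \in \partial J(\bx)$. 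The sequence is therefore bounded; any accumulation point $\bq$ lies in $\partial J(\bx)$ by closedness of the subdifferential graph (using $\bu_{MAP}(\bx, t_{k}) \to \bx$) and satisfies $\left\Vert\bq\right\Vert_{2} \leqslant \left\Vert\bp\right\Vert_{2}$ for every $\bp \in \partial J(\bx)$, which uniquely identifies $\bq = \pi_{\partial J(\bx)}(\boldsymbol{0})$; uniqueness of the accumulation points upgrades subsequential convergence to full convergence.

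\textbf{Main obstacle.} The most delicate step is part (iv): the gradient $\nabla_{\bx} S_0(\bx, t_{k})$ is a subgradient of $J$ at the perturbed point $\bu_{MAP}(\bx, t_{k})$, not at $\bx$ itself, so continuity cannot be invoked directly. The monotonicity inequality~(\ref{eq:monotone_mapping}) is the pivotal tool that simultaneously transfers subgradient information back to $\bx$ and yields the norm bound needed to identify the limit with the minimum-norm element of $\partial J(\bx)$.
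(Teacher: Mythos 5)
Your proof is correct. The paper itself does not argue this theorem directly but defers to the literature (Hiriart-Urruty for (i), and Darbon's 2015 paper for (ii)--(iv)), and those references proceed exactly as you do: Fenchel calculus for the Hopf/Lax--Oleinik identities, Moreau's decomposition for the $C^1$ regularity and the gradient formula $\nabla_{\bx}S_0(\bx,t)=(\bx-\bu_{MAP}(\bx,t))/t$, the test point $\by=\bx$ plus an affine minorant for the $t\to 0$ limits, and the monotonicity of $\partial J$ to pin down the minimal-norm subgradient in (iv). The only places you rely on standard facts without spelling them out --- the Danskin/envelope step for $\partial S_0/\partial t$ and the uniqueness of the convex $C^1$ solution of the HJ equation --- are exactly the points the cited references also treat as classical, so nothing essential is missing.
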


\begin{proof}
The proof of (i) relies on convex analysis; see \cite{hiriart2012optimisation} for a detailed proof. A proof of (ii)-(iv) can be found in \cite{darbon2015convex} (Lemma 2.1, Proposition 3.1, Proposition 3.2 (i), and Proposition 3.4).
\end{proof}
\end{defn}

\section{Connections between Bayesian posterior mean estimators and Hamilton--Jacobi partial differential equations}  \label{sec:connections}

\subsection{Set-up} \label{subsec:set-up}
To establish connections between Bayesian posterior mean estimators and Hamilton--Jacobi equations, we will assume that the regularization term $J$ in the variational imaging model~\eqref{eq:minimization_prob} satisfies the following assumptions:
\begin{enumerate}
\item[(A1)]  $J\in\gmRn$,
\item[(A2)]  $\inter(\dom J)\neq \varnothing$,
\item[(A3)]  $\inf_{\by\in\Rn}J(\by)<+\infty$,
and without loss of generality, $\inf_{\by\in\Rn}J(\by)=0$.
\end{enumerate}
Assumption (A1) ensures that the minimal value of the convex imaging problem \eqref{eq:minimization_prob} and its minimizer \eqref{eq:minimizer_prob} are well-defined and enjoy several properties (see Section~\ref{sec:background}, Definition 10, Theorem~\ref{thm:e_u_nonvisc_hj}). Assumption (A2) ensures that for every $\bx \in \Rn$, $t>0$, and $\epsilon > 0$, the posterior distribution
\begin{equation} \label{eq:prob_dist_j}
    \Rn\times\Rn\times(0,+\infty)\times(0,+\infty) \ni (\by,\bx,t,\epsilon) \mapsto q(\by|\bx,t,\epsilon) = \frac{e^{-\left(\frac{1}{2t}\left\Vert \bx-\by\right\Vert _{2}^{2}+J(\by)\right)/\epsilon}}{\int_{\Rn}e^{-\left(\frac{1}{2t}\left\Vert \bx-\by\right\Vert _{2}^{2}+J(\by)\right)/\epsilon}\diff\by}
\end{equation}
and its associated partition function
\begin{equation} \label{eq:partition_function}
\Rn\times(0,+\infty)\times(0,+\infty)\ni(\bx,t,\epsilon)\mapsto Z_{J}(\bx,t,\epsilon)=\int_{\Rn}e^{-\left(\frac{1}{2t}\left\Vert \bx-\by\right\Vert _{2}^{2}+J(\by)\right)/\epsilon}\diff\by
\end{equation}
are well-defined, and finally assumption (A3) guarantees that the partition function~\eqref{eq:partition_function} is also bounded from above independently of $\bx \in \Rn$. Additional requirements beyond that $J\in\gmRn$ are necessary because the integral in \eqref{eq:partition_function} adds a measure-theoretic aspect largely absent from the convex minimization problem \eqref{eq:minimization_prob}. For convenience, in the rest of this paper we will denote the expected value of a measurable function $f\colon(\Rn,\mathcal{B}(\Rn))\to(\R^{m},\mathcal{B}(\R^{m}))$ (with $m\in\mathbb{N}$) by
\begin{equation} \label{eq:expectation}
\expectationJ{f(\by)}=\frac{1}{Z_{J}(\bx,t,\epsilon)}\int_{\Rn}f(\by)e^{-\left(\frac{1}{2t}\left\Vert \bx-\by\right\Vert _{2}^{2}+J(\by)\right)/\epsilon}\diff\by.
\end{equation}
Thus, we write $\expectationJ{\by}$ for the posterior mean estimate and $\expectationJ{\normsq{\by - \bu_{PM}(\bx,t,\epsilon)}}$ for the MSE of the posterior distribution~\eqref{eq:prob_dist_j}, respectively.

\subsection{Connections to second-order Hamilton--Jacobi equations} \label{subsec:HJ_eqns_visc}
The next theorem establishes connections between viscous HJ PDEs with initial data $J$ satisfying assumptions (A1)-(A3) and both the partition function~\eqref{eq:partition_function} and the Bayesian posterior mean estimate~\eqref{eq:pm_estimate}. These connections mirror those between the first-order HJ PDE~\eqref{eq:hj_non_visc_pde} with initial data $J$ satisfying assumption (A1) and both the convex minimization problem~\eqref{eq:minimization_prob} and the MAP estimate~\eqref{eq:minimizer_prob}. The connections between viscous HJ PDEs and Bayesian posterior mean estimators will be leveraged later to describe various properties of posterior mean estimators in terms of the observed image $\bx$ and parameters $t$ and $\epsilon$, and in particular in Section~\ref{subsec:HJ_eqns_1order} to show that the posterior mean estimate~\eqref{eq:pm_estimate} can be expressed as the minimizer associated to the solution to a first-order HJ PDE (Theorem~\ref{thm:char_opt_sol}) with twice continuously differentiable and convex regularization term.

\begin{thm}[The viscous Hamilton--Jacobi equation with initial data in $\gmRn$] \label{thm:visc_hj_eqns} Suppose the function $J$ satisfies assumptions (A1)-(A3). Then the following statements hold.
\begin{enumerate}
\item[(i)]  For every $\epsilon>0$, the function $S_\epsilon\colon\Rn\times[0,+\infty)\to[0,+\infty)$ defined by
\begin{equation} \label{eq:hj_s_eps}
S_{\epsilon}(\bx,t)\coloneqq-\epsilon\ln\left(\frac{1}{\left(2\pi t\epsilon\right)^{n/2}}Z_J(\bx,t,\epsilon)\right)=-\epsilon\ln\left(\frac{1}{\left(2\pi t\epsilon\right)^{n/2}}\int_{\Rn}e^{-\left(\frac{1}{2t}\left\Vert \bx-\by\right\Vert _{2}^{2}+J(\by)\right)/\epsilon}\diff\by\right)
\end{equation}
is the unique smooth solution to the second-order Hamilton Jacobi PDE with initial data
\begin{equation}
\begin{dcases}\label{eq:hj_visc_pde}
\frac{\partial S_{\epsilon}}{\partial t}(\bx,t)+\frac{1}{2}\left\Vert \nabla_{\bx}S_{\epsilon}(\bx,t)\right\Vert _{2}^{2}=\frac{\epsilon}{2}\Laplacian S_{\epsilon}(\bx,t) & \text{ in }\Rn\times(0,+\infty),\\
S_{\epsilon}(\bx,0)=J(\bx) & \text{ in }\Rn.
\end{dcases}
\end{equation}
In addition, the domain of integration in \eqref{thm:visc_hj_eqns} can be taken to be $\dom J$ or, up to a set of Lebesgue measure zero, $\inter(\dom J)$ or $\dom(\partial J)$. Furthermore, for every $\bx\in\dom J$ and $\epsilon > 0$, except possibly at the boundary points $\bx\in(\dom J)\backslash(\inter(\dom J))$ if such points exist, the pointwise limit $S_\epsilon(\bx,t)$ as $t \to 0$ exists and satisfies 
\begin{equation}
    \lim_{\substack{t\to0\\t>0}}S_{\epsilon}(\bx,t)=e^{-J(\bx)/\epsilon}.
\end{equation}
If $\bx\in(\dom J)\backslash(\inter(\dom J))$, then we may only conclude 
\[
\liminf_{\substack{t\to0\\t>0}}S_{\epsilon}(\bx,t)\geqslant J(\bx)
\]
 and 
\[\limsup_{\substack{t\to0\\t>0}}S_{\epsilon}(\bx,t)\leqslant J(\bx)-\epsilon\ln\left(\frac{1}{(2\pi\epsilon)^{n/2}}\int_{\dom J}e^{-\frac{1}{2\epsilon}\left\Vert \bx-\by\right\Vert _{2}^{2}}\diff\by\right).
\]

\item[(ii)]  (Convexity and monotonicity properties).
\begin{enumerate}
    \item The function $\Rn\times(0,+\infty)\ni(\bx,t)\mapsto S_{\epsilon}(\bx,t)-\frac{n\epsilon}{2}\ln t$ is jointly convex.
    \item The function $(0,+\infty)\ni t\mapsto S_{\epsilon}(\bx,t)-\frac{n\epsilon}{2}\ln t$ is strictly monotone decreasing.
    \item The function $(0,+\infty)\ni\epsilon\mapsto S_{\epsilon}(\bx,t)-\frac{n\epsilon}{2}\ln\epsilon$ is strictly monotone decreasing.
    \item The function $\Rn \ni \bx \mapsto \frac{1}{2}\normsq{\bx} - tS_\epsilon(\bx,t)$ is strictly convex.
\end{enumerate}

\item[(iii)] (Connections to the posterior mean and MSE) The posterior mean estimate $\bu_{PM}(\bx,t,\epsilon)$ and the MSE $\expectationJ{\left \Vert \by - \bu_{PM}(\bx,t,\epsilon) \right \Vert_{2}^{2}}$ satisfy the formulas
\begin{equation} \label{eq:grad_s_eps}
    \bu_{PM}(\bx,t,\epsilon) = \bx - t\nabla_{\bx}S_\epsilon(\bx,t)
\end{equation}
and
\begin{equation}\label{eq:exact_variance}
\begin{alignedat}{1}
    \expectationJ{\left \Vert \by - \bu_{PM}(\bx,t,\epsilon) \right \Vert_{2}^{2}} &= t\epsilon \nabla_{\bx}\cdot \bu_{PM}(\bx,t,\epsilon) \\
    &= nt\epsilon - t^2\epsilon\Laplacian S_{\epsilon}(\bx,t).
\end{alignedat}
\end{equation}
Moreover, $\bx \mapsto \bu_{PM}(\bx,t,\epsilon)$ is a bijective function.

\item[(iv)] (Vanishing $\epsilon \to 0$ limit) Let $S_0:\Rn \times (0,+\infty) \to \R$ denote the continuously differentiable and convex solution to the first-order HJ PDE~\eqref{eq:hj_non_visc_pde} with initial data $J$. For every $\bx\in\Rn$ and $t>0$, the following limit holds:
\begin{equation} \label{eq:ldp_limit}
\lim_{\substack{\epsilon\to0\\\epsilon>0}} -\epsilon\ln\left(\frac{1}{\left(2\pi t\epsilon\right)^{n/2}}\int_{\Rn}e^{-\left(\frac{1}{2t}\left\Vert \bx-\by\right\Vert _{2}^{2}+J(\by)\right)/\epsilon}\diff\by\right) = \inf_{\by\in\Rn}\left\{ \frac{1}{2t}\left\Vert \bx-\by\right\Vert _{2}^{2}+J(\by)\right\},
\end{equation}
that is,
\[
\lim_{\substack{\epsilon\to0\\\epsilon>0}}S_{\epsilon}(\bx,t) = S_0(\bx,t),
\]
and the limit converges uniformly over every compact set of $\Rn\times(0,+\infty)$ in $(\bx,t)$. In addition, the gradient $\nabla_{\bx}S_{\epsilon}(\bx,t)$, the partial derivative $\frac{\partial S_{\epsilon}(\bx,t)}{\partial t}$, and the Laplacian $\frac{\epsilon}{2}\Laplacian S_{\epsilon}(\bx,t)$ satisfy the limits
\[
\lim_{\substack{\epsilon\to0\\\epsilon>0}}\nabla_{\bx}S_{\epsilon}(\bx,t)=\nabla_{\bx}S_0(\bx,t), \quad\lim_{\substack{\epsilon\to0\\
\epsilon>0}}\frac{\partial S_{\epsilon}}{\partial t}(\bx,t)=\frac{\partial S_0}{\partial t}(\bx,t),
\]
and
\[
\lim_{\substack{\epsilon\to0\\
\epsilon>0}}\frac{\epsilon}{2}\Laplacian S_{\epsilon}(\bx,t)=0,
\]
where each limit converges uniformly over every compact set of $\Rn\times(0,+\infty)$ in $(\bx,t)$. As a consequence, for every $\bx\in\Rn$ and $t>0$, the pointwise limit of $\bu_{PM}(\bx,t,\epsilon)$ as $\epsilon\to 0$ exists and satisfy
\[
\lim_{\substack{\epsilon\to0\\\epsilon>0}} \bu_{PM}(\bx,t,\epsilon) = \bu_{MAP}(\bx,t),
\]
and the limit converges uniformly over every compact set of $\Rn\times(0,+\infty)$ in $(\bx,t)$.
\end{enumerate}
\end{thm}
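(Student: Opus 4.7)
The plan is to derive everything from the Cole--Hopf transformation and Pr\'ekopa--Leindler, in the order (i)$\to$(iii)$\to$(ii)$\to$(iv). First, for (i), I observe that $v(\bx,t)\coloneqq(2\pi t\epsilon)^{-n/2}Z_J(\bx,t,\epsilon)$ is precisely the convolution of the heat kernel for $\partial_t u = \tfrac{\epsilon}{2}\Laplacian u$ with the bounded non-negative function $e^{-J/\epsilon}$. Standard parabolic regularity then gives that $v$ is smooth and strictly positive on $\Rn\times(0,+\infty)$ (strict positivity uses only that $e^{-J/\epsilon}>0$ on the non-empty open set $\inter(\dom J)$, guaranteed by (A2)), and a direct calculation verifies that $S_\epsilon=-\epsilon\ln v$ satisfies the viscous HJ equation. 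Replacing the integration domain by $\dom J$, $\inter(\dom J)$, or $\dom\partial J$ costs nothing because the symmetric differences have zero Lebesgue measure. For the initial trace at an interior point, an approximate-identity argument using the concentration of the heat kernel and continuity of $J$ on $\inter(\dom J)$ delivers $S_\epsilon(\bx,t)\to J(\bx)$; the boundary $\liminf$ bound comes from Fatou, while the $\limsup$ bound comes from restricting the integration to $\dom J$ and applying Jensen's inequality to the Gaussian-weighted average of $J$.

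For (iii), differentiation under the integral sign, justified by the Gaussian decay supplied by the quadratic fidelity term, gives $\nabla_{\bx}S_\epsilon=(\bx-\expectationJ{\by})/t$, which is the representation formula $\bu_{PM}=\bx-t\nabla_{\bx}S_\epsilon$. Differentiating once more in $\bx$ yields the fluctuation--response identity
\[
\nabla_{\bx}\bu_{PM}(\bx,t,\epsilon)=\tfrac{1}{t\epsilon}\,\expectationJ{(\by-\bu_{PM})(\by-\bu_{PM})^{\top}};
\]
taking its trace and combining with $\nabla_{\bx}\cdot\bu_{PM}=n-t\Laplacian S_\epsilon$ produces the MSE formula. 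Injectivity of $\bu_{PM}(\cdot,t,\epsilon)$ follows because this Jacobian is strictly positive definite (the posterior has positive density on the open set $\inter(\dom J)$, hence a non-degenerate covariance); bijectivity onto its range then follows from the strict convexity of the potential $\tfrac12\normsq{\bx}-tS_\epsilon(\bx,t)$ together with a coercivity estimate obtained from bounding $Z_J$ below by integrating over a ball inside $\inter(\dom J)$ where $J$ is bounded. For (ii), joint convexity (a) follows from Pr\'ekopa--Leindler applied to the jointly convex exponent $(\bx,\by,t)\mapsto \frac{1}{2t}\normsq{\bx-\by}+J(\by)$ (the $(\bx,t)$-joint convexity is the perspective of $\tfrac12\normsq{\cdot}$). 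Substituting the Laplacian formula from (iii) into the viscous HJ equation gives
\[
\partial_t\!\bigl(S_\epsilon-\tfrac{n\epsilon}{2}\ln t\bigr)=-\tfrac{1}{2}\normsq{\nabla_{\bx}S_\epsilon}-\tfrac{1}{2t^2}\expectationJ{\normsq{\by-\bu_{PM}(\bx,t,\epsilon)}},
\]
which is strictly negative because the posterior variance is strictly positive, proving (b). An analogous direct $\epsilon$-derivative produces a relative-entropy-type expression of the same sign, yielding (c), and (d) is immediate from the fluctuation--response identity, which shows $\nabla^2\bigl(\tfrac12\normsq{\bx}-tS_\epsilon\bigr)=\nabla_{\bx}\bu_{PM}\succ 0$.

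For (iv), the pointwise limit \eqref{eq:ldp_limit} is a Laplace/Varadhan-type calculation: $-\epsilon\ln\int e^{-f/\epsilon}\diff\by\to \inf f$ for proper convex $f$, and the $\frac{n\epsilon}{2}\ln(2\pi t\epsilon)$ prefactor vanishes. The local uniform convergence is automatic because each $S_\epsilon-\frac{n\epsilon}{2}\ln t$ is jointly convex by (ii)(a), and pointwise convergence of convex functions to a finite convex function is automatically uniform on compact subsets of the relative interior of the limiting domain. The convergences of $\nabla_{\bx}S_\epsilon$ and $\partial_t S_\epsilon$ then come from the classical fact that locally uniformly convergent convex functions have gradients that converge at every differentiability point of the limit; here this is every point in $\Rn\times(0,+\infty)$ because $S_0\in C^1$ by Theorem~\ref{thm:e_u_nonvisc_hj}. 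The vanishing of $\tfrac{\epsilon}{2}\Laplacian S_\epsilon$ is then forced by the viscous HJ equation itself, since $\partial_t S_\epsilon+\tfrac12\normsq{\nabla_{\bx}S_\epsilon}$ already converges to the correct first-order limit; the posterior mean convergence is a corollary via the representation formula from (iii). The main obstacles I anticipate are the delicate boundary analysis in (i), where the $\limsup$ bound requires a careful Jensen argument with respect to a re-normalized Gaussian on $\dom J$, and the justification of all derivative exchanges in (iii) uniformly in the parameters; elsewhere the convex-analytic machinery does most of the work.
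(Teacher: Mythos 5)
Your overall architecture is the same as the paper's: represent $(2\pi t\epsilon)^{-n/2}Z_J$ as the heat semigroup applied to $e^{-J/\epsilon}$, pass through the Cole--Hopf transform for (i), use Pr\'ekopa--Leindler on the jointly convex exponent for (ii)(a), and in (iv) combine a Laplace/Varadhan limit with the Rockafellar machinery (pointwise convergence of convex functions is locally uniform; gradients converge wherever the limit is differentiable) and read off the Laplacian limit from the PDE itself. Two of your variations are genuine improvements in transparency: the fluctuation--response identity $\nabla_{\bx}\bu_{PM}=\tfrac{1}{t\epsilon}\mathrm{Cov}$ gives (iii), (ii)(d), and the injectivity half of bijectivity in one stroke (the paper instead gets (ii)(d) from H\"older and omits the computation in (iii)); and your proof of (ii)(b) by substituting the variance formula into the PDE to get $\partial_t(S_\epsilon-\tfrac{n\epsilon}{2}\ln t)=-\tfrac12\normsq{\nabla_{\bx}S_\epsilon}-\tfrac{1}{2t^2}\expectationJ{\normsq{\by-\bu_{PM}}}<0$ is clean and different from the paper's pointwise monotone-integrand argument. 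For surjectivity of $\bu_{PM}$ you gesture at ``bijectivity onto its range,'' which is vacuous; the paper's route is to note that $\bx\mapsto\frac12\normsq{\bx}-tS_\epsilon(\bx,t)$ is finite, differentiable and strictly convex on all of $\Rn$, hence of Legendre type, and cite Rockafellar's Corollary 26.3.1.

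There are two concrete gaps. First, the boundary $\limsup$ estimate in (i). Jensen applied to the Gaussian-weighted average of $J$ on $\dom J$ gives $S_\epsilon(\bx,t)\leqslant-\epsilon\ln\bigl(\tfrac{N_t}{(2\pi t\epsilon)^{n/2}}\bigr)+\int J\diff\mu_t$ with $\mu_t$ the normalized Gaussian on $\dom J$; at a relative boundary point $\int J\diff\mu_t$ need not converge to $J(\bx)$ (a lsc convex $J$ can fail to be upper semicontinuous along $\dom J$ at such points), and the normalization term converges to the small-$t$ limiting Gaussian fraction of $\dom J$, not to the scale-$\epsilon$ mass $\frac{1}{(2\pi\epsilon)^{n/2}}\int_{\dom J}e^{-\normsq{\bx-\by}/2\epsilon}\diff\by$ appearing in the stated bound. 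The paper's proof instead uses the interpolation $J(\bx-\sqrt{t}\by)=J((1-\sqrt{t})\bx+\sqrt{t}(\bx-\by))\leqslant(1-\sqrt{t})J(\bx)+\sqrt{t}J(\bx-\by)$, which factors out $e^{-(1-\sqrt t)J(\bx)/\epsilon}$ and leaves an integral handled by Fatou; you need this (or an equivalent device) to land on the stated inequality.

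Second, (ii)(c). A direct computation gives $\partial_\epsilon\bigl(S_\epsilon-\tfrac{n\epsilon}{2}\ln\epsilon\bigr)=\tfrac{n}{2}\ln(2\pi t)-H(q)$, where $H(q)$ is the differential entropy of the posterior; this quantity does not have a fixed sign, so your ``relative-entropy-type expression of the same sign'' does not close. Indeed, in the Tikhonov example $S_\epsilon=\frac{m\normsq{\bx}}{2(1+mt)}+\frac{n\epsilon}{2}\ln(1+mt)$ one finds $\partial_\epsilon\bigl(S_\epsilon-\tfrac{n\epsilon}{2}\ln\epsilon\bigr)=\tfrac{n}{2}\bigl(\ln\tfrac{1+mt}{\epsilon}-1\bigr)>0$ for small $\epsilon$. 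What the paper actually establishes (Lemma in Appendix A, part (ii)(c)) is that $\epsilon\mapsto\epsilon^{n/2}w_\epsilon(\bx,t)=(2\pi t)^{-n/2}Z_J(\bx,t,\epsilon)$ is strictly increasing, by pointwise monotonicity of $\epsilon\mapsto e^{-\Phi/\epsilon}$ for $\Phi\geqslant0$; that is the level at which a monotonicity statement can be proved, and you should formulate and prove (c) there rather than for $S_\epsilon-\tfrac{n\epsilon}{2}\ln\epsilon$.
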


\begin{proof}
See Appendix~\ref{app:A}.
\end{proof}

\begin{rem}
Note that solutions to \eqref{subsec:HJ_eqns_visc} exist under weaker conditions by weakening assumptions (A1) and (A3), but then global existence, pointwise limit to the initial condition (almost everywhere), boundedness, and log--concavity properties of solutions may no longer hold.
\end{rem}

To illustrate certain aspects of Theorem \ref{thm:visc_hj_eqns} and properties of posterior mean estimates, we give here two analytical examples.

\begin{example}[Tikhonov--Phillips regularization] \label{example:tikhonov}
Let $J(\bx) = \frac{m}{2}\left \Vert \bx \right \Vert_{2}^{2}$ with $m>0$, and consider the solution $S_0(\bx,t)$ and $S_{\epsilon}(\bx,t)$ to the first-order PDE \eqref{eq:hj_non_visc_pde} and viscous HJ PDE \eqref{eq:hj_visc_pde} with initial data $J$, respectively. 

The solution $S_0(\bx,t)$ is given by the Lax--Oleinik formula
\begin{alignat*}{1}
S_0(\bx,t) & =\inf_{\by\in\Rn}\left\{ \frac{1}{2t}\left\Vert \bx-\by\right\Vert _{2}^{2}+\frac{m}{2}\left\Vert \by\right\Vert _{2}^{2}\right\} \\
& =\frac{m\left\Vert \bx\right\Vert _{2}^{2}}{2(1+mt)}.
\end{alignat*}
This minimization problem is a special case of Tikhonov--Phillips regularization (also known as ridge regression in statistics), a method for regularizing ill-posed problems in inverse problems and statistics using a quadratic regularization term \cite{phillips1962technique,tikhonov1995numerical}. The corresponding minimizer can be computed using the gradient $\nabla_{\bx}S_0(\bx,t)$ via equation \eqref{eq:grad_map_rep} in Theorem \ref{thm:visc_hj_eqns}:
\begin{equation*}
\bu_{MAP}(\bx,t)  = \bx - t\nabla_{\bx}S_0(\bx,t) = \bx - \frac{mt\bx}{1+mt} = \frac{\bx}{1+mt}.
\end{equation*}

The solution $S_\epsilon(\bx,t)$ is given by the integral
\begin{alignat*}{1}
S_{\epsilon}(\bx,t) & =-\epsilon\ln\left(\frac{1}{(2\pi t\epsilon)^{n/2}}\int_{\Rn}e^{-\left(\frac{1}{2t}\left\Vert \bx-\by\right\Vert _{2}^{2}+\frac{m}{2}\left\Vert \by\right\Vert _{2}^{2}\right)}\diff\by\right)\\
 & =\frac{m\left\Vert \bx\right\Vert ^{2}}{2(1+mt)}+\frac{n\epsilon}{2}\ln\left(1+mt\right).
\end{alignat*}
The posterior mean estimate $\bu_{PM}(\bx,t,\epsilon)$ can be computed using the representation formula~\eqref{eq:grad_s_eps} in Theorem~\ref{thm:visc_hj_eqns}(iii) by calculating the gradient $\nabla_{\bx}S_\epsilon(\bx,t)$:
\begin{equation*}
\bu_{PM}(\bx,t,\epsilon)  = \bx - t\nabla_{\bx}S_\epsilon(\bx,t) = \bx - \frac{mt\bx}{1+mt} = \frac{\bx}{1+mt}.
\end{equation*}
The MSE $\expectationJ{\normsq{\by-\bu_{PM}(\bx,t,\epsilon)}}$ can be computed using the representation formula~\eqref{eq:exact_variance} in Theorem~\ref{thm:visc_hj_eqns}(iii) by calculating the divergence of $\bu_{PM}(\bx,t,\epsilon)$:
\begin{equation}\label{eq:tikhonov_var}
    \expectationJ{\left \Vert \by - \bu_{PM}(\bx,t,\epsilon)\right \Vert_{2}^{2}} = t\epsilon \nabla_{\bx} \cdot \bu_{PM}(\bx,t,\epsilon) = \frac{nt\epsilon}{1+mt}.
\end{equation}

Comparing the solutions $S_0(\bx,t)$ and $S_\epsilon(\bx,t)$, we see that $\lim_{\substack{\epsilon\to0\\
\epsilon>0}}S_{\epsilon}(\bx,t)=S_0(\bx,t)$ for every $\bx\in\Rn$ and $t>0$, in accordance to the result established in Theorem~\ref{thm:visc_hj_eqns}(iv). Note also that while $(\bx,t)\mapsto S_0(\bx,t)$ is jointly convex, its viscous counterpart $(\bx,t)\mapsto S_\epsilon(\bx,t)$ is not. Indeed, $t \mapsto S_\epsilon(\bx,t)$ is not convex. It is convex only after subtracting $\frac{n\epsilon}{2}\ln t$ from $S_{\epsilon}(\bx,t)$. This implies that the joint convexity result Theorem \ref{thm:visc_hj_eqns}(ii)(a) is sharp.
\end{example}

\begin{example}[Soft thresholding] \label{ex:smooth_shrink} 
Let $J(\bx)=\sum_{i=1}^{n}\lambda_{i}\left|\bx_{i}\right|$, where $\lambda_{i}>0$ for each $i\in\{1,\dots,n\}$, and consider the solutions $S_0(\bx,t)$ and $S_{\epsilon}(\bx,t)$ to the first-order PDE \eqref{eq:hj_non_visc_pde} and second-order PDE \eqref{eq:hj_visc_pde} with initial data $J$, respectively.

The solution $S_0(\bx,t)$ is given by the Lax--Oleinik formula
\begin{align*}
S_{0}(\bx,t) & =\inf_{\by\in\Rn}\left\{ \frac{1}{2t}\left\Vert \bx-\by\right\Vert _{2}^{2}+\sum_{i=1}^{n}\lambda_{i}\left|y_{i}\right|\right\} \label{eq:ex_sol_eps_0}\\
 & =\sum_{i=1}^{n}\left(\inf_{y_{i}\in\R}\left\{ \frac{1}{2t}(x_{i}-y_{i})^{2}+\lambda_{i}\left|y_{i}\right|\right\} \right),
\end{align*}
where $x_i$ and $y_i$ denote the $i^{\rm th}$ component of the vectors $\bx$ and $\by$, respectively. In the context of imaging, this minimization problem corresponds to denoising an image with the weighted sum of a quadratic fidelity term and a weighted $l_{1}$-norm as the regularization term. This term is widely used in imaging to encourage sparsity of an image, and it has received considerable interest due to its connection with compressed sensing reconstruction \citep{candes2006robust,donoho2006compressed}. The solution to this minimization problem corresponds to a soft thresholding applied component-wise to the vector $\bx$ \cite{daubechies2004iterative, figueiredo2001wavelet,lions1979splitting}. The soft thresholding operator is defined for any real number $a$ and positive real number $\alpha$ as
\begin{equation} \label{eq:thresholding_operator}
\R \times (0,+\infty) \ni (a,\alpha) \mapsto T(a,\alpha)=\begin{cases}
a-\alpha & \mbox{if }a>\alpha,\\
0 & \mbox{if }a\in[-\alpha,\alpha],\\
a+\alpha & \mbox{if }a<-\alpha.
\end{cases}
\end{equation}
The minimizer in the Lax--Oleinik formula of $S_0(\bx,t)$ is then given component-wise by
\[
\left(\bu_{MAP}(\bx,t)\right)_{i}=T(x_{i},t\lambda_{i}),
\]
so that
\begin{equation*}
S_{0}(\bx,t) = \sum_{i=1}^{n}\left( \frac{1}{2t}(x_{i}-T(x_{i},t\lambda_{i}))^{2}+\lambda_{i}\left|T(x_{i},t\lambda_{i})\right|\right).
\end{equation*}

The solution $S_\epsilon(\bx,t)$ is given by the integral
\begin{align*}
S_{\epsilon}(\bx,t) & =-\epsilon\ln\left(\frac{1}{(2\pi t\epsilon)^{n/2}}\int_{\Rn}e^{-\left(\frac{1}{2t}\left\Vert \bx-\by\right\Vert _{2}^{2}+\sum_{k=1}^{n}\lambda_{i}\left|y_{i}\right|\right)/\epsilon}\diff\by\right)\\
 & =-\epsilon\sum_{i=1}^{n}\ln\left(\frac{1}{2}\sqrt{\frac{2}{\pi t\epsilon}}\int_{-\infty}^{+\infty}e^{-\left(\frac{1}{2t}(x_{i}-y_i)^{2}+\lambda_{i}\left|y_i\right|\right)/\epsilon}\diff y_i\right)\\
 & =-\epsilon\sum_{i=1}^{n}\ln\left(\frac{1}{2}\sqrt{\frac{2}{\pi t\epsilon}}\left(\int_{0}^{+\infty}e^{-\left(\frac{1}{2t}(x_{i}+y_i)^{2}+\lambda_{i}y_i\right)/\epsilon}\diff y_i + \int_{0}^{+\infty}e^{-\left(\frac{1}{2t}(x_{i}-y_i)^{2}+\lambda_{i}y_i\right)/\epsilon}\diff y_i\right)\right)
\end{align*}
To compute this integral, first define the function
\begin{equation*}
    \R \ni z \mapsto L(z) = \frac{1}{2}e^{z^2}\erfc\left(z\right),
\end{equation*}
where $\erfc$ denotes the complementary error function. Then we have (\citep{gradshte2007table}, page 336, integral 3.332, 2., and page 887, integral 8.250, 1.)
\begin{equation*}
    \frac{1}{2}\sqrt{\frac{2}{\pi t\epsilon}}\int_{0}^{+\infty}e^{-\left(\frac{1}{2t}(x_{i}+y_i)^{2}+\lambda_{i}y_i\right)/\epsilon}\diff y_i = e^{-\frac{x_{i}^2}{2t\epsilon}}L\left(\frac{x_i + t\lambda_i}{\sqrt{2t\epsilon}}\right)
\end{equation*}
and
\begin{equation*}
    \frac{1}{2}\sqrt{\frac{2}{\pi t\epsilon}}\int_{0}^{+\infty}e^{-\left(\frac{1}{2t}(x_{i}-y_i)^{2}+\lambda_{i}y_i\right)/\epsilon}\diff y_i = e^{-\frac{x_{i}^2}{2t\epsilon}}L\left(\frac{-x_i + t\lambda_i}{\sqrt{2t\epsilon}}\right),
\end{equation*}
from which we get
\begin{equation*}
S_{\epsilon}(\bx,t)=\frac{\left\Vert \bx\right\Vert _{2}^{2}}{2t} -\epsilon\sum_{i=1}^{n}\ln\left(L\left(\frac{x_i + t\lambda_i}{\sqrt{2t\epsilon}}\right) + L\left(\frac{-x_i + t\lambda_i}{\sqrt{2t\epsilon}}\right)\right).
\end{equation*}
Now, to compute the posterior mean estimate it suffices to compute the gradient of $\nabla_{\bx}S_\epsilon(\bx,t)$ and use the formula $\bu_{PM}(\bx,t,\epsilon) = \bx - t\nabla_{\bx}S_\epsilon(\bx,t)$. To do so, we must compute the derivative of the function $L$. Since 
\begin{equation*}
    \frac{dL}{dz}(z) = 2zL(z)+\frac{1}{\sqrt{\pi}},
\end{equation*}
the chain rule gives 
\begin{equation*}
    \frac{\partial}{\partial x_i}\left(L\left(\frac{x_i + t\lambda_i}{\sqrt{2t\epsilon}}\right) + L\left(\frac{-x_i + t\lambda_i}{\sqrt{2t\epsilon}}\right)\right)  = \left(\frac{x_i + t\lambda_i}{t\epsilon}\right)L\left(\frac{x_i + t\lambda_i}{\sqrt{2t\epsilon}}\right) - \left(\frac{-x_i + t\lambda_i}{t\epsilon}\right)L\left(\frac{-x_i + t\lambda_i}{\sqrt{2t\epsilon}}\right).
\end{equation*}
The posterior mean estimate is therefore given component-wise by
\begin{alignat*}{1}
    (\bu_{PM}(\bx,t,\epsilon))_{i} & = x_i - t(\nabla_{\bx}S_\epsilon(\bx,t))_i\\
    & = x_i + t\lambda_i\left(\frac{L\left(\frac{x_i + t\lambda_i}{\sqrt{2t\epsilon}}\right) + L\left(\frac{-x_i + t\lambda_i}{\sqrt{2t\epsilon}}\right)}{L\left(\frac{x_i + t\lambda_i}{\sqrt{2t\epsilon}}\right) - L\left(\frac{-x_i + t\lambda_i}{\sqrt{2t\epsilon}}\right)}\right)
\end{alignat*}

The posterior mean estimate $\bu_{PM}(\bx,t,\epsilon)$ yields a smooth analogue of the soft thresholding operator $T$ (defined in \eqref{eq:thresholding_operator}) evaluated at $(x_i,t\lambda_i)$, in the sense that $\lim_{\substack{\epsilon\to0\\\epsilon>0}} (\bu_{PM}(\bx,t,\epsilon))_i = T(x_{i},t\lambda_{i})$ for every $i\in\{1,\dots,n\}$ by Theorem \ref{thm:visc_hj_eqns}(iv). Figure~\ref{fig:sparsity_example} shows the MAP and posterior mean estimates in one dimension for the choice of $t=1.25$, $\epsilon=\left\{ 0.025,\,0.1,\,0.25,\,0.5,\,1\right\} $, and $\lambda_{1}=2$ for $x\in[-5,5]$.
\begin{figure}[h]
\includegraphics[width=0.75\textwidth]{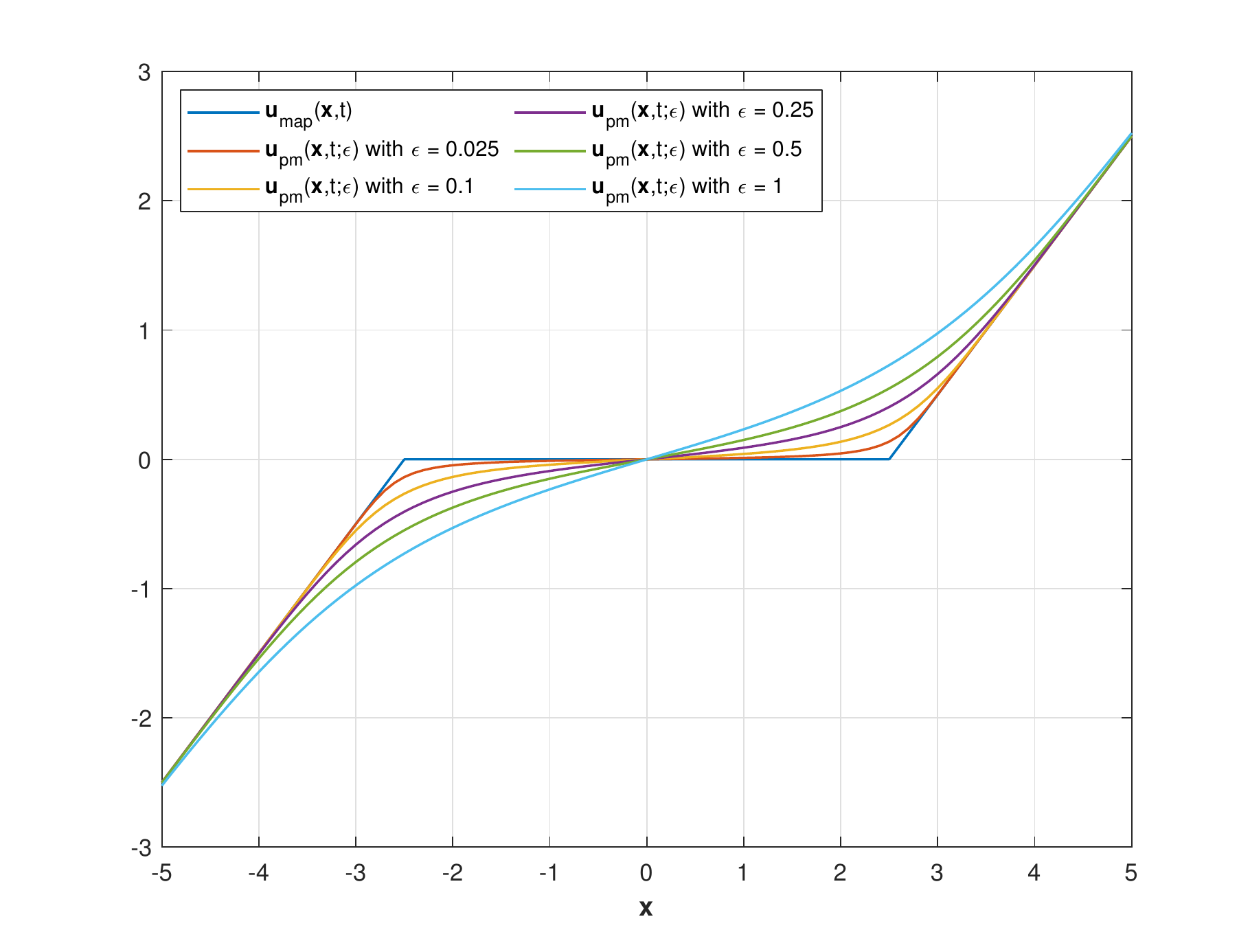}
\caption{Numerical example of the MAP and posterior mean estimates in one dimension with $J(x)=\lambda_{1}\left|x\right|$ for the choice of $t=1.25$, $\epsilon=\left\{ 0.025,\,0.1,\,0.25,\,0.5,\,1\right\}$, and $\lambda_{1}=2$ for $x\in[-5,5]$.}
\label{fig:sparsity_example}
\end{figure}
\end{example}

\subsection{Connections to first-order Hamilton--Jacobi equations} \label{subsec:HJ_eqns_1order}

In this section, we use the connections between the posterior mean estimate~\eqref{eq:pm_estimate} and viscous HJ PDEs established in Theorem~\ref{thm:visc_hj_eqns} to show that the posterior mean estimate can be expressed through the solution to a first-order HJ PDE with initial data of the form of~\eqref{eq:hj_non_visc_pde}. In particular, we show that the posterior mean estimate satisfies the proximal mapping formula
\begin{equation*}
\bu_{PM}(\bx,t,\epsilon) = \argmin_{\by\in\Rn}\left\{ \frac{1}{2}\left\Vert \bx-\by\right\Vert _{2}^{2} + \left(K^{*}_{\epsilon}(\by,t)-\frac{1}{2}\Vert \by \Vert_{2}^{2}\right)\right\},
\end{equation*}
where the function $K_\epsilon\colon\Rn\times\times(0,+\infty) \to \R$ is defined through the solution $S_\epsilon(\bx,t)$ to the viscous HJ PDE~\eqref{eq:hj_visc_pde} via
\begin{equation*}
K_{\epsilon}(\bx,t) \coloneqq \frac{1}{2}\left\Vert \bx\right\Vert _{2}^{2}-tS_{\epsilon}(\bx,t) \equiv t\epsilon\ln\left(\frac{1}{(2\pi t\epsilon)^{n/2}}\int_{\dom J}e^{\left(\frac{1}{t}\left\langle \bx,\by\right\rangle -\frac{1}{2t}\left\Vert \by\right\Vert _{2}^{2}-J(\by)\right)/\epsilon}\diff\by\right),
\end{equation*}
which is convex by Theorem~\ref{thm:visc_hj_eqns}(ii)(d), and where $K^{*}_{\epsilon}(\by,t)$ denotes the Fenchel--Legendre transform of $\by \mapsto K_\epsilon(\by,t)$. This result gives the representation of the convex imaging regularization term whose existence was derived by \cite{louchet2008modeles,gribonval2011should,gribonval2013reconciling,louchet2013posterior} (and later extended to non-Gaussian data fidelity terms in \cite{gribonval2018characterization,gribonval2018bayesian}). This representation result depends crucially on the connections established between the posterior mean estimate $\bu_{PM}(\bx,t,\epsilon)$ and the viscous HJ PDE~\eqref{eq:hj_visc_pde} established in Theorem~\ref{thm:visc_hj_eqns}. Moreover, we show that $\by \mapsto K^{*}_{\epsilon}(\by,t)$ is twice continuously differentiable. This fact has the important consequence that the posterior mean estimate $\bu_{PM}(\bx,t,\epsilon)$ for image denoising avoids staircasing effects thanks to a result established by \citet{nikolova2004weakly} (Theorem 3 in her paper, specifically). This result was proven for Total Variation regularization terms by \citet{louchet2008modeles} in a different manner; here our results are applicable to any regularization term $J$ satisfying assumptions (A1)-(A3).

\begin{thm}[Connections between the posterior mean estimate and first-order HJ PDEs] \label{thm:char_opt_sol}
Suppose the function $J$ satisfies assumptions (A1)-(A3). For every $\bx \in \Rn$, $t>0$, and $\epsilon>0$, let $S_\epsilon(\bx,t)$ denote the solution to the second-order HJ PDE \eqref{eq:hj_visc_pde} with initial data $J$ and let $\bu_{PM}(\bx,t,\epsilon)$ denote the posterior mean estimate \eqref{eq:pm_estimate}. Consider the first-order HJ PDE
\begin{equation} \label{eq:hj_pde_mod}
\begin{dcases}
\frac{\partial\tilde{S}}{\partial s}(\bx,s)+\frac{1}{2}\left\Vert \nabla_{\bx}\tilde{S}(\bx,s)\right\Vert _{2}^{2}=0 & \textrm{\text{ in }}\,\Rn\times(0,+\infty),\\
\tilde{S}(\bx,0)=K^{*}_{\epsilon}(\bx,t)-\frac{1}{2}\Vert \bx \Vert_{2}^{2} & \textrm{\text{ in }}\,\Rn.
\end{dcases}
\end{equation}
Then the initial data $\bx \mapsto K^{*}_{\epsilon}(\bx,t)-\frac{1}{2}\Vert \bx \Vert_{2}^{2}$ is convex, the solution to the HJ PDE~\eqref{eq:hj_pde_mod} satisfies the Lax--Oleinik formula
\begin{equation*}
\tilde{S}_0(\bx,s) = \inf_{\by\in\Rn}\left\{ \frac{1}{2s}\left\Vert \bx-\by\right\Vert _{2}^{2} + \left(K^{*}_{\epsilon}(\by,t)-\frac{1}{2}\Vert \by \Vert_{2}^{2}\right)\right\},
\end{equation*}
and the corresponding minimizer at $s = 1$ is the posterior mean estimate $\bu_{PM}(\bx,t,\epsilon)$:
\begin{equation}\label{eq:upm_rep_map_est}
\bu_{PM}(\bx,t,\epsilon) = \argmin_{\by\in\Rn}\left\{ \frac{1}{2}\left\Vert \bx-\by\right\Vert _{2}^{2} + \left(K^{*}_{\epsilon}(\by,t)-\frac{1}{2}\Vert \by \Vert_{2}^{2}\right)\right\}.
\end{equation}
\end{thm}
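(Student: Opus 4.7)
The strategy is to realize the claim as a direct application of Moreau's theorem (Definition~\ref{def:inf_conv}) chained with Theorem~\ref{thm:e_u_nonvisc_hj} applied to a suitable first-order HJ PDE. The key observation is the pointwise identity
\[
tS_{\epsilon}(\cdot,t)+K_{\epsilon}(\cdot,t)=\tfrac{1}{2}\left\Vert \cdot\right\Vert _{2}^{2},
\]
which follows at once from the definition of $K_{\epsilon}$ recalled just before the theorem.

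First, I would verify the hypotheses of Moreau's theorem for this splitting, namely that both summands are convex and finite-valued on $\Rn$. The first summand is convex as a consequence of the joint convexity statement in Theorem~\ref{thm:visc_hj_eqns}(ii)(a) at fixed $t>0$, and the second is strictly convex by Theorem~\ref{thm:visc_hj_eqns}(ii)(d); both are smooth on $\Rn\times(0,+\infty)$, hence everywhere finite, by the integral representation~\eqref{eq:hj_s_eps}. Moreau's theorem applied with $g=tS_{\epsilon}(\cdot,t)$ and $h=K_{\epsilon}(\cdot,t)$ then produces a unique $F\in\gmRn$ given by
\[
F(\bx)=K_{\epsilon}^{*}(\bx,t)-\tfrac{1}{2}\left\Vert \bx\right\Vert _{2}^{2},
\]
together with the Moreau envelope representation $tS_{\epsilon}(\cdot,t)=\tfrac{1}{2}\|\cdot\|_{2}^{2}\,\Box\,F$. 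The membership $F\in\gmRn$ is precisely the convexity claim for the initial datum of~\eqref{eq:hj_pde_mod}.

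Because $F\in\gmRn$, I would then invoke Theorem~\ref{thm:e_u_nonvisc_hj} on~\eqref{eq:hj_pde_mod}: part~(i) immediately yields the Lax--Oleinik formula
\[
\tilde{S}_{0}(\bx,s)=\inf_{\by\in\Rn}\left\{ \tfrac{1}{2s}\left\Vert \bx-\by\right\Vert _{2}^{2}+K_{\epsilon}^{*}(\by,t)-\tfrac{1}{2}\left\Vert \by\right\Vert _{2}^{2}\right\},
\]
and part~(ii) gives existence and uniqueness of the minimizer together with the representation $\bx-s\nabla_{\bx}\tilde{S}_{0}(\bx,s)$ for it.

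Finally, I would identify the minimizer at $s=1$ with $\bu_{PM}(\bx,t,\epsilon)$. By the Moreau envelope identity obtained above, $\tilde{S}_{0}(\bx,1)=tS_{\epsilon}(\bx,t)$, so Theorem~\ref{thm:e_u_nonvisc_hj}(ii) yields
\[
\argmin_{\by\in\Rn}\left\{ \tfrac{1}{2}\left\Vert \bx-\by\right\Vert _{2}^{2}+F(\by)\right\} =\bx-\nabla_{\bx}\tilde{S}_{0}(\bx,1)=\bx-t\nabla_{\bx}S_{\epsilon}(\bx,t),
\]
and the right-hand side equals $\bu_{PM}(\bx,t,\epsilon)$ by the representation formula~\eqref{eq:grad_s_eps} in Theorem~\ref{thm:visc_hj_eqns}(iii). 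The main conceptual step is recognizing the Moreau envelope structure hidden in the identity $tS_{\epsilon}+K_{\epsilon}=\tfrac{1}{2}\|\cdot\|_{2}^{2}$; once that is seen the proof is a clean chaining of existing theorems, the only remaining (routine) verifications being the finiteness, convexity, and smoothness of the two summands, all of which follow directly from the previously established properties of $S_{\epsilon}$ and $K_{\epsilon}$.
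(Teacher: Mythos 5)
Your proposal is correct and follows essentially the same route as the paper's own proof: the identity $tS_{\epsilon}(\cdot,t)+K_{\epsilon}(\cdot,t)=\tfrac{1}{2}\Vert\cdot\Vert_{2}^{2}$, Moreau's decomposition theorem to obtain $F=K_{\epsilon}^{*}(\cdot,t)-\tfrac{1}{2}\Vert\cdot\Vert_{2}^{2}\in\gmRn$ and the envelope representation of $tS_{\epsilon}$, then Theorem~\ref{thm:e_u_nonvisc_hj} and the formula $\bu_{PM}(\bx,t,\epsilon)=\bx-t\nabla_{\bx}S_{\epsilon}(\bx,t)$ to identify the minimizer at $s=1$. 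Your explicit verification of finiteness of the two summands is a small but welcome addition over the paper's write-up.
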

Moreover, for every $t>0$ and $\epsilon > 0$ the function $\Rn \ni \by \mapsto K^{*}_{\epsilon}(\by,t)$ is twice continuously differentiable.

\begin{proof}
By definition of the function $(\bx,t) \mapsto K_{\epsilon}(\bx,t)$, we may write
\begin{equation*}
    tS_\epsilon(\bx,t) + K_\epsilon(\bx,t) = \frac{1}{2}\Vert \bx \Vert_{2}^{2}.
\end{equation*}
As both $\bx \mapsto tS_\epsilon(\bx,t)$ and $\bx \mapsto K_\epsilon(\bx,t)$ are convex by Theorem~\ref{thm:visc_hj_eqns}(ii)(a) and (d), we can apply Moreau's decomposition theorem (see definition \ref{def:inf_conv} in Section \ref{sec:background}) to conclude that $\bx \mapsto K^{*}_{\epsilon}(\bx,t)-\frac{1}{2}\Vert \bx \Vert_{2}^{2}$ is convex and to express $tS_\epsilon(\bx,t)$ as
\begin{equation} \label{eq:s_eps_equiv}
    tS_\epsilon(\bx,t) = \inf_{\by \in \Rn}\left\{\frac{1}{2}\left \Vert \bx - \by \right \Vert_{2}^{2} + \left(K^{*}_{\epsilon}(\by,t) - \frac{1}{2}\left \Vert \by \right \Vert_{2}^{2}\right)\right\}
\end{equation}
On the one hand, by Theorem~\ref{thm:e_u_nonvisc_hj} the right hand side of \eqref{eq:s_eps_equiv} is the solution $\tilde{S}_0(\bx,s)$ to the first-order HJ PDE \eqref{eq:hj_pde_mod} at $s = 1$, and therefore its minimizer is given by $\bx - \nabla_{\bx}\tilde{S}_0(\bx,1)$. On the other hand, the gradient $\nabla_{\bx}\tilde{S}_0(\bx,1)$ is equal to the left hand side of \eqref{eq:s_eps_equiv}, that is, $\nabla_{\bx}\tilde{S}_0(\bx,1) = t\nabla_{\bx}S_\epsilon(\bx,t)$, which is equal to $\bx - \bu_{PM}(\bx,t,\epsilon)$ by formula~\eqref{eq:pm_estimate}. As a result, the posterior mean estimate $\bu_{PM}(\bx,t,\epsilon)$ minimizes the right hand side of \eqref{eq:s_eps_equiv}, that is, 
\begin{equation*}
\bu_{PM}(\bx,t,\epsilon) = \argmin_{\by\in\Rn}\left\{ \frac{1}{2}\left\Vert \bx-\by\right\Vert _{2}^{2} + \left(K^{*}_{\epsilon}(\by,t)-\frac{1}{2}\Vert \by \Vert_{2}^{2}\right)\right\}.
\end{equation*}

Now, using the strict convexity of $\bx \mapsto K_{\epsilon}(\bx,t)$ and that $\nabla K_\epsilon(\bx,t) = \bu_{PM}(\bx,t,\epsilon)$ is a bijective function in $\bx$ for every $t>0$ and $\epsilon>0$ by Theorem~\ref{thm:visc_hj_eqns} we can invoke (Theorem 26.5, \cite{rockafellar1970convex}) to conclude that $\by \mapsto K^{*}_{\epsilon}(\by,t)$ is a continuously differentiable, strictly convex, and bijective function on $\Rn$, and moreover that $\by \mapsto \nabla_{\by}K^{*}_{\epsilon}(\by,t)$ corresponds to the inverse of $\bx\mapsto\bu_{PM}(\bx,t,\epsilon)$, i.e., $\nabla_{\by}K^{*}_{\epsilon}(\bu_{PM}(\bx,t,\epsilon),t) = \bx$. Finally, as $\bx \mapsto K_{\epsilon}(\bx,t)$ is twice differentiable and strictly convex on $\Rn$, the inverse function theorem (Theorem 7, Appendix C, \cite{evans1998partial}) implies that $\by \mapsto \nabla_{\by}K^{*}_{\epsilon}(\by,t)$ is continuously differentiable on $\Rn$, whence $\by \mapsto K_\epsilon(\by,t)$.
\end{proof}
\section{Properties of MMSE and MAP estimators} \label{sec:properties_estimators}

In this section, we describe various properties of the Bayesian posterior mean estimate~\eqref{eq:pm_estimate} in terms of the data $\bx \in \Rn$, parameters $t>0$ and $\epsilon>0$, and the imaging regularization term $J$. Specifically, in Section~\ref{subsec:prop_topo-rep-mono}, we derive topological, representation, and monotonicity properties of the posterior mean estimate, which we use in Section~\ref{subsec:prop_bounds-est} to further derive an optimal upper bound on the mean square error and other bounds and limit properties of the posterior mean estimate. Finally, we describe the MAP and posterior mean estimates in terms of Bayes risks and their connections to HJ PDEs in Sect.~\ref{subsec:prop_breg_div}.

\subsection{Topological, representation, and monotonicity properties} \label{subsec:prop_topo-rep-mono}
This section describes the topological, representation, and monotonicity properties of the Bayesian posterior mean estimate~\eqref{eq:pm_estimate}, which are stated, respectively, in Propositions~\ref{prop:topo_properties}, \ref{prop:pm_rep_props}, and \ref{prop:mono_properties}.

The first result, Proposition~\ref{prop:topo_properties}, states that the posterior mean estimate belongs in the interior of the domain of $J$ for all data $\bx \in \Rn$ and parameters $t>0$ and $\epsilon>0$.
\begin{prop}[Topological properties] \label{prop:topo_properties}
Suppose the function $J$ satisfies assumptions (A1)-(A3), and let $\bx\in\Rn$, $t>0$, and $\epsilon>0$. Then the posterior mean estimate $\bu_{PM}(\bx,t,\epsilon)$ is contained in $\inter{\dom J}$. As a consequence, the subdifferential $\partial J(\bu_{PM}(\bx,t,\epsilon))$ is non-empty and 
\[
J(\bu_{PM}(\bx,t,\epsilon))\leqslant \expectationJ{J(\by)} < \epsilon\left(e^{S_\epsilon(\bx,t)/\epsilon} - 1\right) < +\infty.
\]
\end{prop}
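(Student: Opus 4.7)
The plan is to prove the three claims in order, using the convex structure of $J$, the definition of $S_\epsilon$ in \eqref{eq:hj_s_eps}, and one clean moment-cancellation identity.

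\textbf{Topology step.} To show $\bu_{PM}(\bx,t,\epsilon)\in \inter(\dom J)$, I would exploit that the posterior density in \eqref{eq:prob_dist_j} vanishes outside $\dom J$ and is strictly positive on $\inter(\dom J)$, which by (A2) and Definition~\ref{def:convex_sets} is a non-empty open convex set of positive Lebesgue measure whose boundary $\bd(\dom J)$ is negligible. Since $\dom J$ is convex, a standard first-moment argument (every closed half-space containing $\dom J$ also contains its posterior barycenter) places $\bu_{PM}\in\cl(\dom J)$. Assuming for contradiction that $\bu_{PM}\in\bd(\dom J)$, I would pick a supporting hyperplane at $\bu_{PM}$ with normal $\bv\neq\boldsymbol{0}$ so that $\langle \bv,\by-\bu_{PM}\rangle\leqslant 0$ for every $\by\in\dom J$; this inequality sharpens to a strict one on $\inter(\dom J)$, because any interior point where it were tight could be perturbed along $\bv$ while staying inside the open set, violating the support inequality. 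Since the posterior density is strictly positive on this open set of positive measure, $\expectationJ{\langle \bv,\by-\bu_{PM}\rangle}<0$, contradicting $\langle\bv,\expectationJ{\by}-\bu_{PM}\rangle=0$. Non-emptiness of $\partial J(\bu_{PM}(\bx,t,\epsilon))$ is then immediate from the inclusion $\inter(\dom J)\subset \ri(\dom J)\subset \dom\partial J$ recorded in Definition~\ref{def:subgrad}.

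\textbf{Jensen step.} The left inequality $J(\bu_{PM}(\bx,t,\epsilon))\leqslant \expectationJ{J(\by)}$ is Jensen's inequality for the convex function $J$ applied to the posterior law; it uses finiteness of $J$ at $\bu_{PM}(\bx,t,\epsilon)$, which is guaranteed by Step 1.

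\textbf{Moment computation.} For the strict upper bound, I would start from the elementary pointwise inequality $z\leqslant \epsilon(e^{z/\epsilon}-1)$ for $z\geqslant 0$ (equivalent to $e^y\geqslant 1+y$), applied with $z=J(\by)$ on $\dom J$. Integrating against the posterior, the factor $e^{J(\by)/\epsilon}$ exactly cancels the Gibbs weight in \eqref{eq:expectation}, leaving a truncated Gaussian:
\[
\expectationJ{e^{J(\by)/\epsilon}} \;=\; \frac{1}{Z_J(\bx,t,\epsilon)}\int_{\dom J} e^{-\normsq{\bx-\by}/(2t\epsilon)}\diff\by \;\leqslant\; \frac{(2\pi t\epsilon)^{n/2}}{Z_J(\bx,t,\epsilon)} \;=\; e^{S_\epsilon(\bx,t)/\epsilon},
\]
where the final equality is just \eqref{eq:hj_s_eps} rearranged. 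Combining yields $\expectationJ{J(\by)}\leqslant \epsilon(e^{S_\epsilon(\bx,t)/\epsilon}-1)$, and finiteness of the right-hand side is automatic from finiteness of $S_\epsilon$ by Theorem~\ref{thm:visc_hj_eqns}(i). Strictness of the middle inequality follows because at least one of the two uses above is strict in any non-degenerate setting: either $\dom J\subsetneq\Rn$ (so the truncated Gaussian is strictly less than $(2\pi t\epsilon)^{n/2}$), or $J$ is not identically zero on $\dom J$ (so $e^{J/\epsilon}>1+J/\epsilon$ on a set of positive posterior mass).

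\textbf{Main obstacle.} The most delicate piece is the topology step: ruling out the possibility that mass concentrated near the boundary of $\dom J$ drags the posterior mean onto $\bd(\dom J)$. The key ingredients are the strictness of the support inequality on the open interior and the positive posterior mass that this open set carries, both provided by (A2) together with Definition~\ref{def:convex_sets}. Once $\bu_{PM}(\bx,t,\epsilon)\in\inter(\dom J)$ is secured, the remainder reduces to a one-line Jensen argument and the clean moment-cancellation computation above.
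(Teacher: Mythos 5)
Your proposal is correct and follows essentially the same route as the paper's Appendix B: a two-step topological argument (first-moment/half-space reasoning to get $\bu_{PM}(\bx,t,\epsilon)\in\cl(\dom J)$, then a supporting-hyperplane contradiction using the strict positivity of the posterior on $\inter(\dom J)$ to upgrade to the interior), followed by the subgradient/Jensen inequality and the elementary bound $1+z\leqslant e^{z}$ with the cancellation $e^{J(\by)/\epsilon}$ against the Gibbs weight. Your added discussion of when the middle inequality is actually strict is in fact slightly more careful than the paper, whose proof only establishes $\leqslant$ (and indeed equality occurs for $J\equiv 0$ on $\Rn$).
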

\begin{proof}
See Appendix B.
\end{proof}

The second result, Proposition~\ref{prop:pm_rep_props}, gives representation formulas for the posterior mean estimate. In particular, when the regularization term $J$ satisfies assumptions (A1)-(A3) and $\dom J = \Rn$, the posterior mean estimate and MSE then satisfy representation formulas in terms of the mean minimal subgradient of $J$ given by $\expectationJ{\pi_{\partial J(\by)}(\boldsymbol{0})}$. These representation formulas are then used to show that when $\dom J \neq \Rn$, the posterior mean estimate can nonetheless be approximated using the first-order HJ PDE~\eqref{eq:hj_non_visc_pde} by smoothing the initial $J$ via a Moreau--Yosida approximation $S_0(\bx,\mu)$ for any $\mu >0$.

\begin{prop}[Representation properties]\label{prop:pm_rep_props}
Suppose the function $J$ satisfies assumptions (A1)-(A3), let $\bx\in\Rn$, $t>0$, and $\epsilon>0$, and let $(\bx,t) \mapsto S_0$ and $(\bx,t) \mapsto S_\epsilon$ denote the solutions to the first and second-order HJ PDEs~\eqref{eq:hj_non_visc_pde} and \eqref{eq:hj_visc_pde} with initial data $J$, respectively.
\begin{enumerate}

    \item[(i)] (Representation formulas) Suppose that $\dom J = \Rn$. Then the posterior mean estimate $\bu_{PM}(\bx,t,\epsilon)$ and the MSE $\expectationJ{\normsq{\by-\bu_{PM}(\bx,t,\epsilon)}}$ of the Bayesian posterior distribution~\eqref{eq:prob_dist_j} satisfy the representation formulas
\begin{equation} \label{eq:diff_pm_rep}
    \bu_{PM}(\bx,t,\epsilon) = \bx -t\expectationJ{\pi_{\partial J(\by)}(\boldsymbol{0})}
\end{equation}
and
\begin{equation} \label{eq:diff_var_rep}
    \expectationJ{\normsq{\by-\bu_{PM}(\bx,t,\epsilon)}} = nt\epsilon - t\expectationJ{\left \langle \pi_{\partial J(\by)}(\boldsymbol{0}), \by - \bu_{PM}(\bx,t,\epsilon) \right \rangle},
\end{equation}
with $\pi_{\partial J(\by)}(\boldsymbol{0}) = \nabla J(\by)$ when $J$ is continuously differentiable. In particular, the gradient and Laplacian of the solution $(\bx,t) \mapsto S_\epsilon(\bx,t)$ to the HJ PDE~\eqref{eq:hj_visc_pde} with initial data $J$ satisfy 
\[
\nabla_{\bx}S_\epsilon(\bx,t) = \expectationJ{\pi_{\partial J(\by)}(\boldsymbol{0})}
\]
and
\[
\Laplacian S_{\epsilon}(\bx,t) = \frac{1}{t\epsilon}\expectationJ{\left \langle \pi_{\partial J(\by)}(\boldsymbol{0}), \by - \bu_{PM}(\bx,t,\epsilon) \right \rangle},
\]

    \item[(ii)] (Limit formulas) Let $\{\mu_{k}\}_{k=1}^{+\infty}$ be a sequence of positive real numbers decreasing to zero. Then the gradient of the solution $(\bx,t) \mapsto S_\epsilon(\bx,t)$ to the HJ PDE~\eqref{eq:hj_visc_pde} with initial data $J$ satisfies the limit
\[
\nabla_{\bx} S_\epsilon(\bx,t) = \lim_{k\to +\infty}\left(\frac{\int_{\Rn} \nabla_{\by}S_0(\by,\mu_k)e^{-\left(\frac{1}{2t}\normsq{\bx-\by} + S_0(\by,\mu_k)\right)/\epsilon}\diff\by}{\int_{\Rn} e^{-\left(\frac{1}{2t}\normsq{\bx-\by} + S_0(\by,\mu_k)\right)/\epsilon}\diff\by}\right).
\]
As a consequence, the posterior mean estimate $\bu_{PM}(\bx,t,\epsilon)$ satisfies the limit
\begin{equation} \label{eq:approx_limit}
    \bu_{PM}(\bx,t,\epsilon) = \bx - t\lim_{k\to +\infty}\left(\frac{\int_{\Rn} \nabla_{\by}S_0(\by,\mu_k)e^{-\left(\frac{1}{2t}\normsq{\bx-\by} + S_0(\by,\mu_k)\right)/\epsilon}\diff\by}{\int_{\Rn} e^{-\left(\frac{1}{2t}\normsq{\bx-\by} + S_0(\by,\mu_k)\right)/\epsilon}\diff\by}\right).
\end{equation}
\end{enumerate}
\end{prop}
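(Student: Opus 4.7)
The plan is to derive the gradient identity $\nabla_{\bx}S_\epsilon(\bx,t) = \expectationJ{\pi_{\partial J(\by)}(\boldsymbol{0})}$ by integration by parts, whence both \eqref{eq:diff_pm_rep} and \eqref{eq:diff_var_rep} follow via Theorem~\ref{thm:visc_hj_eqns}(iii). Direct differentiation of \eqref{eq:hj_s_eps} gives $\nabla_{\bx}S_\epsilon(\bx,t) = \expectationJ{(\bx-\by)/t}$. The key observation is $((\bx-\by)/(t\epsilon))\,e^{-\normsq{\bx-\by}/(2t\epsilon)} = \nabla_{\by}\bigl(e^{-\normsq{\bx-\by}/(2t\epsilon)}\bigr)$; integration by parts in $\by$, with boundary terms vanishing by Gaussian decay, transfers the derivative onto $e^{-J(\by)/\epsilon}$. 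Because $\dom J = \Rn$, $J$ is locally Lipschitz on $\Rn$, hence differentiable a.e.\ by Rademacher's theorem, with $\nabla J(\by) = \pi_{\partial J(\by)}(\boldsymbol{0})$ at every differentiability point (Definition~\ref{def:subgrad}). This produces the stated identity and yields \eqref{eq:diff_pm_rep}.

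For \eqref{eq:diff_var_rep}, I take the divergence of the gradient identity. Writing the expectation as $Z_J^{-1}\int \pi_{\partial J(\by)}(\boldsymbol{0}) e^{-H(\by)/\epsilon}\diff\by$ with $H(\by) = \normsq{\bx-\by}/(2t) + J(\by)$ and using $\nabla_{\bx}H = (\bx-\by)/t$ together with the Gibbs identity $\nabla_{\bx}\ln Z_J = -\nabla_{\bx}S_\epsilon/\epsilon$ produces
\begin{equation*}
\Laplacian S_\epsilon(\bx,t) = \frac{1}{t\epsilon}\left(\langle \nabla_{\bx}S_\epsilon, \bx - \bu_{PM}\rangle - \expectationJ{\langle \pi_{\partial J(\by)}(\boldsymbol{0}), \bx-\by\rangle}\right).
\end{equation*}
Decomposing $\bx - \by = (\bx - \bu_{PM}) - (\by - \bu_{PM})$ inside the expectation, the $(\bx - \bu_{PM})$ contribution reproduces $\langle \nabla_{\bx}S_\epsilon, \bx - \bu_{PM}\rangle$ and cancels the first term, leaving $\Laplacian S_\epsilon = (t\epsilon)^{-1}\expectationJ{\langle \pi_{\partial J(\by)}(\boldsymbol{0}), \by - \bu_{PM}\rangle}$. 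Substituting into Theorem~\ref{thm:visc_hj_eqns}(iii) gives \eqref{eq:diff_var_rep}.

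\textbf{Part (ii).} When $\dom J \neq \Rn$, the plan is to apply part (i) to the Moreau--Yosida envelope $S_0(\cdot,\mu_k)$ of $J$, which by Theorem~\ref{thm:e_u_nonvisc_hj}(i) is finite-valued, convex, and continuously differentiable on all of $\Rn$, and then pass $\mu_k \to 0$. Applying part (i) with $J$ replaced by $S_0(\cdot,\mu_k)$ directly produces the displayed quotient formula with $\nabla_{\by}S_0(\by,\mu_k)$ in place of the minimal-norm subgradient. For the limit, Theorem~\ref{thm:e_u_nonvisc_hj}(i) gives $S_0(\by,\mu_k) \to J(\by)$ pointwise for $\by \in \dom J$, and assumption (A3) ensures $S_0(\by,\mu_k) \ge 0$ uniformly, so dominated convergence (with the Gaussian factor as the dominator) yields pointwise convergence of the associated partition functions and hence $S_\epsilon^{(\mu_k)}(\bx,t) \to S_\epsilon(\bx,t)$. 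Since each $\bx \mapsto S_\epsilon^{(\mu_k)}(\bx,t) - (n\epsilon/2)\ln t$ is convex by Theorem~\ref{thm:visc_hj_eqns}(ii)(a), pointwise convergence to a smooth limit upgrades to locally uniform convergence and hence to pointwise convergence of gradients, which establishes the stated formula. The limit \eqref{eq:approx_limit} then follows from $\bu_{PM}(\bx,t,\epsilon) = \bx - t\nabla_{\bx}S_\epsilon(\bx,t)$.

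\textbf{Main obstacle.} The most delicate step is justifying the integration by parts in part (i) when $J$ is only locally Lipschitz rather than $C^1$. The cleanest route is in fact to prove part (ii) first (where $S_0(\cdot,\mu_k)$ is smooth and the integration by parts is classical), and then deduce part (i) for general $J$ with $\dom J = \Rn$ as the limit $\mu_k \to 0$ using $\nabla_{\by}S_0(\by,\mu_k) \to \pi_{\partial J(\by)}(\boldsymbol{0})$ almost everywhere (Theorem~\ref{thm:e_u_nonvisc_hj}(iv)) together with dominated convergence, thereby bypassing any chain-rule subtleties for locally Lipschitz integrands.
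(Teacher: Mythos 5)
Your overall strategy matches the paper's: part (i) is proved by a divergence/integration-by-parts identity for the vector field $\by\mapsto \bv\, e^{-(\frac{1}{2t}\normsq{\bx-\by}+J(\by))/\epsilon}$ with boundary terms killed by Gaussian decay, and part (ii) is proved exactly as in the paper — apply the smooth case of (i) to the Moreau--Yosida envelope $S_0(\cdot,\mu_k)$, use dominated convergence on the partition functions only, and upgrade pointwise convergence of the convex potentials to convergence of gradients (this is Rockafellar, Theorem 25.7). Your alternative derivation of the Laplacian formula by differentiating $\expectationJ{\pi_{\partial J(\by)}(\boldsymbol{0})}$ in $\bx$ and decomposing $\bx-\by=(\bx-\bu_{PM})-(\by-\bu_{PM})$ is a correct variant of the paper's second divergence identity. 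The one substantive difference is how the non-smoothness of $J$ is handled in part (i): the paper invokes Pfeffer's measure-theoretic divergence theorem together with the Alberti--Ambrosio--Cannarsa result that the non-differentiability set of a convex function is $\sigma$-finite for the $(n-1)$-dimensional measure, applies it on balls $\cl{\cballr}$, and lets $r\to+\infty$. Your Rademacher/weak-derivative route is a legitimate alternative, but as written it silently assumes the absolute integrability of $\by\mapsto\pi_{\partial J(\by)}(\boldsymbol{0})\,e^{-(\frac{1}{2t}\normsq{\bx-\by}+J(\by))/\epsilon}$, which is not free (the subgradients of a finite convex function can grow super-exponentially) and is something the truncation argument must actually deliver rather than presuppose.

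The genuine gap is in your proposed ``cleanest route'' for the main obstacle: proving (ii) first and recovering (i) by passing the limit $\mu_k\to 0$ inside the numerator integral ``together with dominated convergence.'' There is no obvious dominating function there. The natural candidates fail: using $\left\Vert\nabla_{\by}S_0(\by,\mu_k)\right\Vert_2\leqslant\left\Vert\pi_{\partial J(\by)}(\boldsymbol{0})\right\Vert_2$ and $e^{-S_0(\by,\mu_k)/\epsilon}\leqslant 1$ gives the majorant $\left\Vert\pi_{\partial J(\by)}(\boldsymbol{0})\right\Vert_2 e^{-\frac{1}{2t\epsilon}\normsq{\bx-\by}}$, which is not integrable for, e.g., $J(\by)=e^{\normsq{\by}}$ (where $\left\Vert\pi_{\partial J(\by)}(\boldsymbol{0})\right\Vert_2\sim 2\left\Vert\by\right\Vert_2 e^{\normsq{\by}}$); keeping the factor $e^{-S_0(\by,\mu_k)/\epsilon}$ does not help because the envelopes increase to $J$, so $e^{-S_0(\by,\mu_k)/\epsilon}\geqslant e^{-J(\by)/\epsilon}$ and the only $k$-uniform upper bound $e^{-S_0(\by,\mu_1)/\epsilon}$ has merely Gaussian decay; and the bound $\left\Vert\nabla_{\by}S_0(\by,\mu_k)\right\Vert_2\leqslant\sqrt{2S_0(\by,\mu_k)/\mu_k}$ blows up as $\mu_k\to 0$. (The paper runs into the same difficulty in Appendix~\ref{app:mono} and resorts to Fatou's lemma, which only yields an inequality, then uses the already-established representation formulas to close the argument.) So the limit interchange you propose needs a uniform-integrability argument that is not supplied, and this is precisely why the paper proves (i) directly via Pfeffer's theorem rather than as a corollary of (ii). Your primary route for (i) is salvageable, but only if you supply the missing integrability of the subgradient term; the detour through (ii) does not bypass the difficulty.
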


\begin{proof}
See Appendix C for the proof.
\end{proof}

\begin{rem}
Note that the representation formulas in Proposition~\ref{prop:pm_rep_props}(ii) may not hold if $\dom J \neq \Rn$. To see this, consider $J:\Rn\to\R\cup\{+\infty\}$ defined by
\begin{equation*}
J(\by) =
\begin{dcases}
0, & \textrm{if } \left\Vert \by \right\Vert_2 \leqslant 1,\\
+\infty, & \textrm{otherwise}.
\end{dcases}
\end{equation*}
Then the domain of $J$ is the unit sphere in $\Rn$, which is convex, and $J$ satisfies assumptions (A1)-(A3). The function $J$ is continuously differentiable in $\inter{\dom J}$, with $\nabla J(\by) = \boldsymbol{0}$ for every $\by\in\inter{\dom J}$. Clearly, $\expectationJ{\pi_{\partial J(\by)}(\boldsymbol{0})} = 0$. However, for every $\bx \neq \boldsymbol{0}$, the posterior mean estimate $\bu_{PM}(\bx,t,\epsilon) \neq \bx$. Hence, the representation formula~\eqref{eq:diff_pm_rep} does not hold in that case.
\end{rem}

The third result, Proposition~\ref{prop:mono_properties}, describes monotonicity properties of the posterior mean estimate, which in particular will be leveraged in the next subsection to derive an optimal upper bound for the MSE $\expectationJ{\normsq{\by-\bu_{PM}(\bx,t,\epsilon)}}$ and several estimates and limit results of $\bu_{PM}(\bx,t,\epsilon)$ in terms of the observed image $\bx$ and parameter $t>0$. Our proof of the following proposition, which is presented in Appendix~\ref{app:mono}, uses the properties of solutions to first-order HJ PDEs presented in Theorem~\ref{thm:e_u_nonvisc_hj} together with the representation formulas~\eqref{eq:diff_pm_rep} and \eqref{eq:diff_var_rep}.

\begin{prop}[Monotonicity property] \label{prop:mono_properties}
Suppose the function $J$ satisfies assumptions (A1)-(A3), and let $\bx\in\Rn$, $t>0$, and $\epsilon>0$. Suppose also that $J$ is strongly convex of parameter $m\geqslant0$ (with $m=0$ corresponding to the definition of convexity). Then for every $\by_0 \in \dom\partial J$,
\begin{equation}  \label{eq:monotonicity_prop}
\begin{alignedat}{1}
\left(\frac{1+mt}{t}\right)\expectationJ{\normsq{\by-\by_{0}}} &\leqslant \expectationJ{\left\langle \left(\frac{\by - \bx}{t} + \pi_{\partial J(\by)}(\boldsymbol{0})\right)-\left(\frac{\by_0 - \bx}{t} + \pi_{\partial J(\by_0)}(\boldsymbol{0})\right),\by-\by_{0}\right\rangle}
\\
&\leqslant n\epsilon -\left\langle \left(\frac{\by_0 - \bx}{t} + \pi_{\partial J(\by_0)}(\boldsymbol{0})\right), \bu_{PM}(\bx,t,\epsilon) - \by_0 \right\rangle.
\end{alignedat}
\end{equation}
\end{prop}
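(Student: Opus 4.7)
The plan is to prove the two inequalities separately.

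For the left inequality, I would exploit the monotonicity of the subdifferential of a strongly convex function. Fix $\bx$ and set $g_{\bx}(\by) := \frac{1}{2t}\normsq{\bx-\by}+J(\by)$; this function is strongly convex of parameter $\frac{1+mt}{t}$, being the sum of the $\frac{1}{t}$-strongly convex quadratic and the $m$-strongly convex function $J$. Its subdifferential at any $\by \in \dom\partial J$ satisfies $\partial g_{\bx}(\by) = \frac{\by-\bx}{t} + \partial J(\by)$, so the vector $A(\by) := \frac{\by-\bx}{t} + \pi_{\partial J(\by)}(\boldsymbol{0})$ is a subgradient of $g_{\bx}$ at $\by$. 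Applying the monotonicity inequality~\eqref{eq:monotone_mapping} to $g_{\bx}$ at the points $\by$ and $\by_{0}$ with the subgradients $A(\by)$ and $A(\by_{0})$, and then taking the expectation over $\by$ with respect to the posterior, immediately yields the left inequality.

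For the right inequality, I would reduce the task to proving
\[
\expectationJ{\left\langle A(\by),\, \by - \by_{0}\right\rangle} \leqslant n\epsilon,
\]
from which the desired upper bound follows by expanding $\expectationJ{\langle A(\by) - A(\by_{0}),\, \by - \by_{0}\rangle}$, using that $A(\by_{0})$ is deterministic and $\expectationJ{\by} = \bu_{PM}(\bx,t,\epsilon)$. I would first treat the special case $\dom J = \Rn$, in which equality actually holds. Here the representation formula~\eqref{eq:diff_pm_rep} gives $\expectationJ{\pi_{\partial J(\by)}(\boldsymbol{0})} = (\bx - \bu_{PM}(\bx,t,\epsilon))/t$, so $\expectationJ{A(\by)} = \boldsymbol{0}$, and hence $\expectationJ{\langle A(\by),\, \by - \by_{0}\rangle} = \expectationJ{\langle A(\by),\, \by - \bu_{PM}(\bx,t,\epsilon)\rangle}$. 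Splitting this inner product and using $\expectationJ{\by - \bu_{PM}(\bx,t,\epsilon)} = \boldsymbol{0}$, the contribution from $\frac{\by-\bx}{t}$ evaluates to $\tfrac{1}{t}\expectationJ{\normsq{\by - \bu_{PM}(\bx,t,\epsilon)}}$, while the contribution from $\pi_{\partial J(\by)}(\boldsymbol{0})$ equals $n\epsilon - \tfrac{1}{t}\expectationJ{\normsq{\by - \bu_{PM}(\bx,t,\epsilon)}}$ by formula~\eqref{eq:diff_var_rep}; their sum is precisely $n\epsilon$.

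To extend to a general $J$ with $\dom J \neq \Rn$, I would approximate $J$ by its Moreau--Yosida envelopes $J_{\mu_{k}} := S_{0}(\cdot,\mu_{k})$ with $\mu_{k}\downarrow 0$. By Theorem~\ref{thm:e_u_nonvisc_hj}, each $J_{\mu_{k}}$ is continuously differentiable and convex on all of $\Rn$, so the preceding computation applied to the posterior with prior $J_{\mu_{k}}$ gives equality. Passing to the limit $\mu_{k}\to 0$ via the limit formula in Proposition~\ref{prop:pm_rep_props}(ii), combined with dominated / Fatou-type convergence arguments for the expectations associated to each approximating posterior, produces the inequality (rather than equality) $\expectationJ{\left\langle A(\by),\, \by - \by_{0}\right\rangle} \leqslant n\epsilon$ in the general case. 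The main technical obstacle is exactly this passage to the limit: one must control the convergence of the expectations of $\normsq{\by - \by_{0}}$ and of $\langle \tfrac{\by-\bx}{t}+\nabla J_{\mu_{k}}(\by),\, \by - \by_{0}\rangle$ under the perturbed posteriors, using the pointwise convergence $\nabla J_{\mu_{k}}(\by) \to \pi_{\partial J(\by)}(\boldsymbol{0})$ on $\dom\partial J$, integrability bounds furnished by the strong log-concavity of the perturbed posteriors, and the uniform-in-$k$ control on partition functions implied by assumption (A3).
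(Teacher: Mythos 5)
Your proposal is correct and follows essentially the same route as the paper: the left inequality from strong monotonicity of the subdifferential of $\by\mapsto\frac{1}{2t}\normsq{\bx-\by}+J(\by)$, the identity $\expectationJ{\langle\frac{\by-\bx}{t}+\pi_{\partial J(\by)}(\boldsymbol{0}),\by-\by_0\rangle}=n\epsilon$ in the smooth full-domain case via the representation formulas of Proposition~\ref{prop:pm_rep_props}, and Moreau--Yosida regularization with Fatou/dominated convergence for general $J$. The only point to make explicit in the limit passage is that the paper obtains the upper bound by applying Fatou to the \emph{nonnegative} integrands $\langle A(\by)-A(\by_0),\by-\by_0\rangle e^{-(\frac{1}{2t}\normsq{\bx-\by}+S_0(\by,\mu_k))/\epsilon}$ (nonnegative precisely by the monotonicity you invoke for the left inequality), which is exactly the device your sketch needs to turn the approximate equalities into the inequality for general $\dom J$.
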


\begin{proof}
See Appendix~\ref{app:mono} for the proof.
\end{proof}

As a corollary of this result, we now show that the mean minimal subgradient $\expectationJ{\pi_{\partial J(\by)}(\boldsymbol{0})}$ is finite; this fact will be used later in Subsection~\ref{subsec:prop_breg_div} for proving Theorem~\ref{thm:bregman_div}(i).

\begin{cor}\label{cor:mmsp}
Suppose $J$ satisfies assumptions (A1)-(A3). For every $\bx\in\Rn$, $t>0$, and $\epsilon>0$, the mean minimal subgradient $\expectationJ{\pi_{\partial J(\by)}(\boldsymbol{0})}$ is finite.
\end{cor}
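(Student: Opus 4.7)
The plan is to prove the stronger statement $\expectationJ{\|\pi_{\partial J(\by)}(\boldsymbol{0})\|_2} < +\infty$, from which finiteness of each coordinate of the vector $\expectationJ{\pi_{\partial J(\by)}(\boldsymbol{0})}$ follows immediately, as each coordinate is dominated in absolute value by the Euclidean norm. The strategy is to derive a pointwise upper bound on $\|\pi_{\partial J(\by)}(\boldsymbol{0})\|_2$ in terms of quantities whose expectation we already control, and then integrate against the posterior.

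First, I would use assumption (A2) to fix a reference point $\by_0 \in \inter(\dom J)$ together with a radius $r > 0$ such that $\overline{B_{r/2}(\by_0)} \subset \inter(\dom J)$. Since proper convex functions are continuous on the interior of their domain, $J$ is bounded above on this compact set by some $M < +\infty$. For any $\by \in \dom\partial J$ and any unit vector $\bv \in \Rn$, the subgradient inequality applied at $\by$ with test point $\by_0 + (r/2)\bv$ gives
\[
\langle \pi_{\partial J(\by)}(\boldsymbol{0}),\, \by_0 + (r/2)\bv - \by\rangle \;\leqslant\; J(\by_0 + (r/2)\bv) - J(\by) \;\leqslant\; M,
\]
where the second inequality uses $J \geqslant 0$ from (A3). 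Maximizing over unit $\bv$ (the supremum is attained when $\bv$ is parallel to $\pi_{\partial J(\by)}(\boldsymbol{0})$) then yields the key pointwise bound
\[
\frac{r}{2}\,\|\pi_{\partial J(\by)}(\boldsymbol{0})\|_2 \;\leqslant\; M + \langle \pi_{\partial J(\by)}(\boldsymbol{0}),\, \by - \by_0\rangle.
\]
This bound holds almost surely under the posterior, since the posterior is absolutely continuous with respect to Lebesgue measure and the boundary of $\dom J$ has Lebesgue measure zero (Definition~\ref{def:convex_sets}).

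Next, I would bound the expectation of the cross term by applying Proposition~\ref{prop:mono_properties} with $m = 0$ and the same $\by_0$. Setting $\bp_0 = \pi_{\partial J(\by_0)}(\boldsymbol{0})$, expanding the middle expression in~\eqref{eq:monotonicity_prop}, and discarding the nonnegative term $\frac{1}{t}\expectationJ{\normsq{\by-\by_0}}$ gives an explicit finite upper bound on $\expectationJ{\langle \pi_{\partial J(\by)}(\boldsymbol{0}) - \bp_0,\, \by - \by_0\rangle}$ in terms of $\bx$, $t$, $\epsilon$, $\by_0$, $\bp_0$, and $\bu_{PM}(\bx,t,\epsilon)$. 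Because $\expectationJ{\langle \bp_0,\by-\by_0\rangle} = \langle \bp_0, \bu_{PM}(\bx,t,\epsilon) - \by_0\rangle$ is manifestly finite, it follows that $\expectationJ{\langle \pi_{\partial J(\by)}(\boldsymbol{0}),\, \by - \by_0\rangle}$ is finite. Taking expectations in the pointwise bound above then yields $(r/2)\expectationJ{\|\pi_{\partial J(\by)}(\boldsymbol{0})\|_2} < +\infty$, completing the argument.

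The main obstacle I anticipate is a measure-theoretic subtlety: before manipulating expectations I must first justify that each is well-defined in $(-\infty,+\infty]$ (or $[-\infty,+\infty)$). The subgradient inequality evaluated at the fixed $\by_0$ gives $\langle \pi_{\partial J(\by)}(\boldsymbol{0}),\, \by-\by_0\rangle \geqslant J(\by) - J(\by_0) \geqslant -J(\by_0)$ pointwise, supplying a uniform lower bound on the integrand, so its expectation lies in $(-\infty,+\infty]$; the upper bound from Proposition~\ref{prop:mono_properties} then pins it to a finite value. The analogous positive-part/negative-part reasoning justifies passing to expectations in the final pointwise inequality. Once this bookkeeping is in place, the argument proceeds as sketched.
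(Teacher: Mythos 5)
Your proposal is correct, and it takes a genuinely different (and in fact stronger) route than the paper. The paper's own proof also leans on Proposition~\ref{prop:mono_properties}, but it works purely with scalar products: it pairs the integrand with the fixed direction $\by_0-\bu_{PM}(\bx,t,\epsilon)$ for $\by_0\in\inter(\dom J)$, deduces from the two-sided monotonicity bound that $\left\langle \expectationJ{\pi_{\partial J(\by)}(\boldsymbol{0})},\bu_{PM}(\bx,t,\epsilon)-\by_0\right\rangle$ is finite, and concludes finiteness of the vector (implicitly, by letting $\by_0$ range over enough of $\inter(\dom J)$ to span $\Rn$). You instead establish the stronger assertion $\expectationJ{\left\Vert\pi_{\partial J(\by)}(\boldsymbol{0})\right\Vert_2}<+\infty$: the subgradient inequality tested on a small ball $\overline{B_{r/2}(\by_0)}\subset\inter(\dom J)$, together with local boundedness of $J$ there and $J\geqslant 0$ from (A3), gives the pointwise estimate $\frac{r}{2}\Vert\pi_{\partial J(\by)}(\boldsymbol{0})\Vert_2\leqslant M+\left\langle\pi_{\partial J(\by)}(\boldsymbol{0}),\by-\by_0\right\rangle$, and the cross term's expectation is controlled by the same Proposition~\ref{prop:mono_properties} (the decomposition into the two nonnegative pieces $\frac{1}{t}\normsq{\by-\by_0}$ and $\left\langle\pi_{\partial J(\by)}(\boldsymbol{0})-\pi_{\partial J(\by_0)}(\boldsymbol{0}),\by-\by_0\right\rangle$ is legitimate, and your lower bound $\left\langle\pi_{\partial J(\by)}(\boldsymbol{0}),\by-\by_0\right\rangle\geqslant -J(\by_0)$ makes all expectations well-defined before they are shown finite). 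What this buys is an absolute-integrability statement that immediately yields finiteness of every component without any argument about spanning directions, thereby also closing the small gap in the paper's one-direction conclusion; the cost is the extra geometric input (an interior ball and a local bound $M$ on $J$), which is harmless under (A2). Both arguments are non-circular, since Proposition~\ref{prop:mono_properties} is proved independently of this corollary.
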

\begin{proof}
Let $\by_0$ be any element of $\inter{\dom J}$ different from $\bu_{PM}(\bx,t,\epsilon)$; such an element exists because $\inter{\dom J} \neq \varnothing$ by assumption (A2). Consider the scalar product
\begin{equation*}
\begin{alignedat}{1}
\left\langle \frac{\by - \bx}{t} + \pi_{\partial J(\by)}(\boldsymbol{0}), \by_0 - \bu_{PM}(\bx,t,\epsilon)  \right\rangle = &\left\langle \left(\frac{\by - \bx}{t} + \pi_{\partial J(\by)}(\boldsymbol{0})\right), \by - \bu_{PM}(\bx,t,\epsilon)  \right\rangle
\\
& - \left\langle \left(\frac{\by - \bx}{t} + \pi_{\partial J(\by)}(\boldsymbol{0})\right), \by - \by_0 \right\rangle.
\end{alignedat}
\end{equation*}
Take the expectation $\mathbb{E}_J\left[\cdot\right]$ and use the monotonicity property~\eqref{eq:monotonicity_prop} to get the inequality
\[
-n\epsilon \leqslant \left\langle \left(\frac{\bu_{PM}(\bx,t,\epsilon) - \bx}{t} + \expectationJ{\pi_{\partial J(\by)}(\boldsymbol{0})}\right) - \left(\frac{\by_0 - \bx}{t} + \pi_{\partial J(\by_0)}(\boldsymbol{0})\right), \bu_{PM}(\bx,t,\epsilon) - \by_0 \right\rangle \leqslant n\epsilon.
\]
The scalar product in the equation above is therefore finite, which implies that the mean minimal subgradient $\expectationJ{\pi_{\partial J(\by)}(\boldsymbol{0})}$ in the scalar product is also finite.
\end{proof}

\subsection{Bound and limit properties} \label{subsec:prop_bounds-est}

In this section, we derive an optimal bound on the MSE $\expectationJ{\normsq{\by-\bu_{PM}(\bx,t,\epsilon)}}$, various bounds on the posterior mean estimate $\bu_{PM}(\bx,t,\epsilon)$, and limiting results of the posterior mean estimate in terms of the parameters $t$.

\begin{prop}[Bounds and limit properties] \label{prop:bounds_limits}
Suppose $J$ satisfies assumptions (A1)-(A3), and suppose that it is strongly convex of parameter $m \geqslant 0$ (with $m = 0$ corresponding to the definition of convexity).

\begin{itemize}
    \item[(i)] For every $\bx \in \Rn$, $t>0$, and $\epsilon>0$, the MSE $\expectationJ{\normsq{\by-\bu_{PM}(\bx,t,\epsilon)}}$ of the Bayesian posterior distribution~\eqref{eq:prob_dist_j} satisfies the upper bound 
    \begin{equation} \label{eq:mono_var_ub}
        \expectationJ{\normsq{\by-\bu_{PM}(\bx,t,\epsilon)}} \leqslant \frac{nt\epsilon}{1+mt}.
    \end{equation}

    \item[(ii)] For every $\bx \in \Rn$, $t>0$, and $\epsilon>0$, the square of the Euclidean norm between the posterior mean estimate and the MAP estimate satisfies the upper bound
    \begin{equation} \label{eq:difference_pm_map}
        \normsq{\bu_{MAP}(\bx,t)-\bu_{PM}(\bx,t,\epsilon)} \leqslant \frac{nt\epsilon}{1+mt}.
    \end{equation}
    
    \item[(iii)] The posterior mean estimate is monotone and non-expensive, that is, for every $\bx$, $\boldsymbol{d} \in \Rn$, $t>0$, and $\epsilon>0$,
    \begin{equation} \label{eq:upm_monotone}
        \left\langle \bu_{PM}(\bx + \boldsymbol{d},t,\epsilon) - \bu_{PM}(\bx,t,\epsilon), \boldsymbol{d} \right\rangle \geqslant 0
    \end{equation}
    and
    \begin{equation} \label{eq:lipschitz_cts}
         \left\Vert\bu_{PM}(\bx + \boldsymbol{d},t,\epsilon) -\bu_{PM}(\bx,t,\epsilon)\right\Vert_2  \leqslant \left\Vert \boldsymbol{d}\right\Vert_2.
    \end{equation}
    \item[(iv)] Let $\{t_k\}_{k=1}^{+\infty}$ be a sequence of positive real numbers converging to $0$ and let $\{\boldsymbol{d}_k\}_{k=1}^{+\infty}$ be a sequence of elements of $\Rn$ converging to $\boldsymbol{d} \in \Rn$. Then for every $\bx \in \dom J$ and $\epsilon>0$, the pointwise limit of $\bu_{PM}(\bx + t_k\boldsymbol{d}_k,t_k,\epsilon)$ as $k\to +\infty$ exists and satisfies
    \[
        \lim_{k \to +\infty} \bu_{PM}(\bx + t_k\boldsymbol{d}_k,t_k,\epsilon) = \bx.
    \]
\end{itemize}
\end{prop}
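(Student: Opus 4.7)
The strategy for parts (i) and (ii) is to apply the monotonicity inequality~\eqref{eq:monotonicity_prop} with a judicious choice of $\by_0$ that makes the scalar product in the right-hand upper bound vanish. For (i), I would take $\by_0 = \bu_{PM}(\bx,t,\epsilon)$, which is admissible because Proposition~\ref{prop:topo_properties} guarantees $\bu_{PM}(\bx,t,\epsilon) \in \inter(\dom J) \subset \dom\partial J$; then $\bu_{PM}(\bx,t,\epsilon) - \by_0 = \boldsymbol{0}$ annihilates the scalar product, leaving $\left(\frac{1+mt}{t}\right)\expectationJ{\normsq{\by-\bu_{PM}(\bx,t,\epsilon)}} \leqslant n\epsilon$, which rearranges to~\eqref{eq:mono_var_ub}. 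For (ii), I would take $\by_0 = \bu_{MAP}(\bx,t)$, which lies in $\dom\partial J$ with the specific subgradient $\bp_0 = \frac{\bx - \bu_{MAP}(\bx,t)}{t} \in \partial J(\bu_{MAP}(\bx,t))$ by Theorem~\ref{thm:e_u_nonvisc_hj}(ii). Since the derivation of~\eqref{eq:monotonicity_prop} relies only on the monotonicity of the subdifferential, the inequality remains valid when $\pi_{\partial J(\by_0)}(\boldsymbol{0})$ is replaced by any subgradient of $J$ at $\by_0$; using $\bp_0$, the combination $\frac{\by_0 - \bx}{t} + \bp_0 = \boldsymbol{0}$ kills the scalar product and yields $\expectationJ{\normsq{\by - \bu_{MAP}(\bx,t)}} \leqslant \frac{nt\epsilon}{1+mt}$. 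Jensen's inequality applied to the convex map $\|\cdot\|_{2}^{2}$ then gives $\normsq{\bu_{MAP}(\bx,t) - \bu_{PM}(\bx,t,\epsilon)} = \normsq{\expectationJ{\bu_{MAP}(\bx,t) - \by}} \leqslant \expectationJ{\normsq{\bu_{MAP}(\bx,t) - \by}}$, delivering~\eqref{eq:difference_pm_map}.

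For (iii), I would invoke Moreau's decomposition theorem as recalled in Definition~\ref{def:inf_conv}. By Theorem~\ref{thm:visc_hj_eqns}(ii)(a) and (d), the maps $\bx \mapsto tS_\epsilon(\bx,t)$ and $\bx \mapsto K_\epsilon(\bx,t) \coloneqq \frac{1}{2}\normsq{\bx} - tS_\epsilon(\bx,t)$ are both convex on $\Rn$ and sum to $\frac{1}{2}\normsq{\cdot}$. Moreau's theorem therefore guarantees that both are continuously differentiable and that their gradients are proximal mappings of some common $F \in \gmRn$ and of $F^*$, which are firmly non-expansive and in particular $1$-Lipschitz. Since $\bu_{PM}(\bx,t,\epsilon) = \bx - t\nabla_\bx S_\epsilon(\bx,t) = \nabla_\bx K_\epsilon(\bx,t)$, the map $\bx \mapsto \bu_{PM}(\bx,t,\epsilon)$ is the gradient of the convex function $K_\epsilon(\cdot,t)$ (yielding the monotonicity~\eqref{eq:upm_monotone}) and is $1$-Lipschitz (yielding~\eqref{eq:lipschitz_cts}).

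For (iv), I would split $\left\Vert \bu_{PM}(\bx + t_k\boldsymbol{d}_k, t_k, \epsilon) - \bx \right\Vert_2$ by the triangle inequality into three terms: $\left\Vert \bu_{PM}(\bx+t_k\boldsymbol{d}_k, t_k, \epsilon) - \bu_{PM}(\bx, t_k, \epsilon) \right\Vert_2$, $\left\Vert \bu_{PM}(\bx, t_k, \epsilon) - \bu_{MAP}(\bx, t_k) \right\Vert_2$, and $\left\Vert \bu_{MAP}(\bx, t_k) - \bx \right\Vert_2$. The first is bounded by $t_k\left\Vert \boldsymbol{d}_k \right\Vert_2 \to 0$ via the non-expansiveness from (iii), the second by $\sqrt{nt_k\epsilon/(1+mt_k)} \to 0$ via (ii), and the third tends to zero by Theorem~\ref{thm:e_u_nonvisc_hj}(iii) since $\bx \in \dom J$. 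The subtle step in the whole plan is part (ii): the inequality~\eqref{eq:monotonicity_prop} is stated with the minimal-norm subgradient $\pi_{\partial J(\by_0)}(\boldsymbol{0})$, but eliminating the right-hand scalar product requires inserting a specific, generally non-minimal, subgradient at $\bu_{MAP}$. This is justified by observing that the proof of Proposition~\ref{prop:mono_properties} only uses monotonicity of the subdifferential at $\by_0$, which is valid for every subgradient selection.
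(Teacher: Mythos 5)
Your proposal is correct and follows essentially the same route as the paper: parts (i) and (ii) both specialize the monotonicity inequality~\eqref{eq:monotonicity_prop} to $\by_0 = \bu_{PM}(\bx,t,\epsilon)$ and $\by_0 = \bu_{MAP}(\bx,t)$ respectively (the paper sidesteps your subgradient-replacement step by first extracting $\expectationJ{\langle \frac{\by-\bx}{t}+\pi_{\partial J(\by)}(\boldsymbol{0}),\by-\by_0\rangle}\leqslant n\epsilon$ and then invoking strong monotonicity of $\partial\Phi_J$ at $\bu_{MAP}$, where $\boldsymbol{0}$ is a subgradient, but your justification via the proof of Proposition~\ref{prop:mono_properties} is equally valid), and then applies Jensen. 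Parts (iii) and (iv) likewise match, up to cosmetic differences: the paper cites the Fenchel duality between strong convexity and Lipschitz gradients where you cite firm non-expansiveness of proximal maps via Moreau, and it uses a two-term triangle inequality centered at $\bx+t_k\boldsymbol{d}_k$ where you use a three-term one centered at $\bx$.
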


\begin{proof}
Proof of (i):  Since $\bu_{PM}(\bx,t,\epsilon) \in \inter{\dom J}$ by Proposition~\ref{prop:topo_properties} and $\inter{\dom J} \subset \dom \partial J$ (see Definition~\ref{def:subgrad}), we can set $\by_0 = \bu_{PM}(\bx,t,\epsilon)$ in the monotonicity inequality~\eqref{eq:monotonicity_prop} in Proposition~\ref{prop:mono_properties}(i) and rearrange to get the upper bound~\eqref{eq:mono_var_ub}.

Proof of (ii):  Note that for every $\by_0 \in \dom\partial J$, the monotonicity inequality~\eqref{eq:monotonicity_prop} in Proposition~\ref{prop:mono_properties} yields
\[
\expectationJ{\left\langle \left(\frac{\by - \bx}{t} + \pi_{\partial J(\by)}(\boldsymbol{0})\right),\by-\by_{0}\right\rangle} \leqslant n\epsilon.
\]
Choose $\by_0 = \bu_{MAP}(\bx,t)$, which for every $\bx$ and $t>0$ is always an element of $\dom \partial J$ and also satisfies the inclusion $\left(\frac{\bx-\bu_{MAP}(\bx,t)}{t}\right) \in \partial J(\bu_{MAP}(\bx,t))$ by part (ii) of Theorem~\ref{thm:e_u_nonvisc_hj}. Hence the monotonicity of the subdifferential of $\by \mapsto \frac{1}{2t}\normsq{\bx-\by} + J(\by)$ and strong convexity of $J$ of parameter $m\geqslant0$ implies
\[
\left(\frac{1+mt}{t}\right)\normsq{\by-\bu_{MAP}(\bx,t)} \leqslant \left\langle \left(\frac{\bx - \by}{t} + \pi_{\partial J(\by)}(\boldsymbol{0})\right),\by-\bu_{MAP}(\bx,t)\right\rangle.
\]
Combine these inequalities to get $\expectationJ{\normsq{\by-\bu_{MAP}(\bx,t)}} \leqslant \frac{nt\epsilon}{1+mt}$, and use the convexity of the Euclidean norm to get inequality~\eqref{eq:difference_pm_map}.

Proof of (iii): The convexity of $\bx \mapsto K_{\epsilon}(\bx,t)$ by Theorem~\ref{thm:visc_hj_eqns}(ii)(d) and $\nabla_{\bx}K_{\epsilon}(\bx,t) = \bu_{PM}(\bx,t,\epsilon)$ implies the monotone property~\eqref{eq:upm_monotone} (see definition~\ref{def:subgrad}, equation~\eqref{eq:monotone_mapping}, and \cite{rockafellar1970convex}, page 240 and Corollary 31.5.2). Since both functions $\bx \mapsto S_\epsilon(\bx,t)$ and $\bx \mapsto \frac{1}{2}\normsq{\bx} - tS_\epsilon(\bx,t)$ are convex by Theorem~\ref{thm:visc_hj_eqns}(ii)(a) and (d), the gradient of the function $\bx \mapsto \frac{1}{2}\normsq{\bx} - tS_\epsilon(\bx,t)$, whose value is the posterior mean estimate $\bu_{PM}(\bx,t,\epsilon)$ by Theorem~\ref{thm:visc_hj_eqns}(iii), is Lipschitz continuous with unit constant (see \cite{zhou2018fenchel} for a simple proof), that is,
\[
\left\Vert \left(\bx+\boldsymbol{d} - t\nabla_{\bx}S_\epsilon(\bx+\boldsymbol{d},t)\right) - \left(\bx - t\nabla_{\bx}S_\epsilon(\bx,t)\right) \right\Vert_2 \equiv \left\Vert\bu_{PM}(\bx+\boldsymbol{d},t,\epsilon) - \bu_{PM}(\bx,t,\epsilon)\right\Vert_2 \leqslant \left\Vert\boldsymbol{d}\right\Vert_2,
\]
which proves the non-expensive inequality~\eqref{eq:lipschitz_cts}.

Proof of (iv): Inequality~\eqref{eq:difference_pm_map} and the triangle inequality imply
\begin{equation*}
\left\Vert (\bx + t_k\boldsymbol{d}_k)- \bu_{PM}(\bx + t_k\boldsymbol{d}_k,t_k,\epsilon)\right\Vert_{2} \leqslant \left\Vert (\bx + t_k\boldsymbol{d}_k) - \bu_{MAP}(\bx + t_k\boldsymbol{d}_k,t_k)\right\Vert_{2} + \sqrt{\frac{nt_k\epsilon}{1+mt}}.
\end{equation*}
The limit $\lim_{k \to +\infty} \bu_{PM}(\bx + t_k\boldsymbol{d}_k,t_k,\epsilon) = \bx$ then follows by Theorem~\ref{thm:e_u_nonvisc_hj}(i).
\end{proof}

\begin{rem}
The upper bound for the MSE in \eqref{eq:mono_var_ub} is optimal; as shown in Example~\ref{example:tikhonov} it is attained for the quadratic term $J(\bx) = \frac{m}{2}\normsq{\bx}$. 
\end{rem}

\subsection{Bayesian risks and Hamilton--Jacobi partial differential equations} \label{subsec:prop_breg_div}

In this section, we will consider the Bayesian risk associated to the Bregman loss function
\begin{equation}\label{eq:breg2}
\by \mapsto D_{\Phi_{J}}(\bu,\varphi_{J}(\by|\bx,t)),
\end{equation}
where
\begin{equation*}
    \Rn \times \Rn \times (0,+\infty) \ni (\by,\bx,t) \mapsto \Phi_J(\by|\bx,t) = \frac{1}{2t}\normsq{\bx-\by} + J(\by),
\end{equation*}
which is up to a constant the negative logarithm of the posterior distribution~\eqref{eq:prob_dist_j}, and
\begin{equation*}
    \Rn \times \Rn \times (0,+\infty) \ni (\by,\bx,t) \mapsto \varphi_J(\by|\bx,t) = \left(\frac{\by-\bx}{t}\right) + \pi_{\partial J(\by)}(\boldsymbol{0}),
\end{equation*}
which is a subgradient of the function $\by \mapsto \Phi_{J}(\by|\bx,t)$. The corresponding Bayesian risk to the posterior distribution~\eqref{eq:prob_dist_j} correspond to the expected value $\expectationJ{D_{\Phi_{J}}(\bu,\varphi_{J}(\by|\bx,t))}$. We refer the reader to \cite{banerjee2005optimality} and \cite{keener2011theoretical} for discussions on Bregman loss functions and Bayesian estimation theory.

Recent work by \cite{burger2014maximum} has shown that the MAP estimate~\eqref{eq:minimizer_prob} corresponds to the Bayes estimator associated to the Bregman loss function~\eqref{eq:breg2} when the regularization term $J$ is convex and uniformly Lipschitz continuous on $\Rn$. This was later extended by \cite{burger2016bregman} to posterior distributions with non-Gaussian fidelity term, and later studied from the point of view from differential geometry in \cite{pereyra2019revisiting} and also derived for posterior distributions that are strongly log-concave and sufficiently smooth. Here, we will use the connections between maximum a posteriori and posterior mean estimates and Hamilton--Jacobi equations derived in Section~\ref{sec:connections} to show that when the regularization term $J$ is convex on $\Rn$, then the MAP estimate $\bu_{MAP}(\bx,t)$ minimizes in expectation the Bregman loss function~\eqref{eq:breg2}. Thus, under the assumption of a Gaussian data fidelity term, this result generalizes the result from \citet{burger2014maximum} (Theorem 1) by removing the uniformly Lipschitz continuity assumption on $J$. Moreover, we also show that when $\dom J \neq \Rn$, there still exists a Bayes estimator. A similar result was established in \citet{pereyra2019revisiting} (see Theorem 4 and section 5.3), where $J$ was assumed to be thrice differentiable and under strong convexity assumptions on the posterior distribution. In contrast, our results only need that $J$ satisfies assumptions (A1)-(A3). The results rely on the monotonicity property~\eqref{eq:monotonicity_prop} and finiteness of the mean minimal subgradient $\expectationJ{\pi_\partial(\by)(\boldsymbol{0})}$ as shown in Corollary~\ref{cor:mmsp}.

\begin{thm}[Bregman divergences] \label{thm:bregman_div}
Suppose the function $J$ satisfies assumptions (A1)-(A3), and let $\bx\in\Rn$, $t>0,$ and $\epsilon>0$.
\begin{enumerate}

\item[(i)] The mean Bregman loss function  $\dom J\ni\bu\mapsto\expectationJ{D_{\Phi_{J}}(\bu,\varphi_{J}(\by|\bx,t))} \in \R$ has a unique minimizer $\bar{\bu}\in\dom\partial J$ that satisfies the inclusion
\begin{equation} \label{eq:subdiff_rep1}
\left(\frac{\bx-\bar{\bu}}{t}\right) \in \partial J(\bar{\bu}) + \left( \nabla_{\bx}S_\epsilon(\bx,t) - \expectationJ{\pi_{\partial J(\by)}(\boldsymbol{0})}\right),
\end{equation}
where addition in \eqref{eq:subdiff_rep1} is taken in the sense of sets.

\item[(ii)] If $J$ is finite everywhere on $\Rn$, i.e., $\dom J = \Rn$, then the MAP estimate $\bu_{MAP}(\bx,t)$ is the unique global minimizer of the Bregman loss function $\Rn \ni \bu\mapsto\expectationJ{D_{\Phi_{J}}(\bu,\varphi_{J}(\by|\bx,t))} \in \R$, i.e.,
\begin{equation} \label{eq:map_est_bregman}
    \bu_{MAP}(\bx,t) = \argmin_{\bu \in \Rn} \expectationJ{D_{\Phi_{J}}(\bu,\varphi_{J}(\by,\bx,t))}
\end{equation}
\end{enumerate}
\end{thm}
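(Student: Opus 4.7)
The plan is to compute the mean Bregman loss function explicitly, isolate the pieces that depend on $\bu$, and apply Fermat's rule to the resulting strongly convex minimization problem. Using Definition~\ref{def:breg_div},
\[
D_{\Phi_J}(\bu,\varphi_J(\by|\bx,t)) = \Phi_J(\bu|\bx,t) - \langle \varphi_J(\by|\bx,t),\bu\rangle + \Phi_J^*(\varphi_J(\by|\bx,t)|\bx,t),
\]
and taking $\expectationJ{\cdot}$ gives
\[
\expectationJ{D_{\Phi_J}(\bu,\varphi_J(\by|\bx,t))} = \Phi_J(\bu|\bx,t) - \langle \expectationJ{\varphi_J(\by|\bx,t)},\bu\rangle + C(\bx,t,\epsilon),
\]
where $C(\bx,t,\epsilon)$ collects the $\bu$-independent remainder. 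The mean subgradient $\expectationJ{\varphi_J(\by|\bx,t)}$ is a finite vector because $\expectationJ{(\by-\bx)/t} = (\bu_{PM}(\bx,t,\epsilon)-\bx)/t = -\nabla_{\bx}S_\epsilon(\bx,t)$ by Theorem~\ref{thm:visc_hj_eqns}(iii) and $\expectationJ{\pi_{\partial J(\by)}(\boldsymbol{0})}$ is finite by Corollary~\ref{cor:mmsp}, whence
\[
\expectationJ{\varphi_J(\by|\bx,t)} = -\nabla_{\bx}S_\epsilon(\bx,t) + \expectationJ{\pi_{\partial J(\by)}(\boldsymbol{0})}.
\]
Because the quadratic data-fidelity term contributes strong convexity of parameter $1/t$ to $\Phi_J(\cdot|\bx,t)$, the $\bu$-dependent part of the objective is strongly convex on $\dom J$ and therefore possesses a unique global minimizer $\bar\bu$, whose location is independent of $C(\bx,t,\epsilon)$.

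Next, to confirm that the objective indeed maps $\dom J$ into $\R$ as the statement requires, I would verify that $C(\bx,t,\epsilon)$ is finite. Using Fenchel's equality $\Phi_J^*(\varphi_J(\by|\bx,t)|\bx,t) = \langle \varphi_J(\by|\bx,t),\by\rangle - \Phi_J(\by|\bx,t)$ (valid because $\varphi_J(\by|\bx,t)\in\partial\Phi_J(\by|\bx,t)$) reduces the constant to expectations that are controlled by the MSE bound of Proposition~\ref{prop:bounds_limits}(i), the bound $\expectationJ{J(\by)}<\epsilon(e^{S_\epsilon(\bx,t)/\epsilon}-1)$ from Proposition~\ref{prop:topo_properties}, and the subgradient inequality at a reference point $\by_0\in\inter\dom J$ combined with the monotonicity of the subdifferential mapping. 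With finiteness in hand, Fermat's rule for the convex objective yields
\[
\boldsymbol{0}\in\partial_{\bu}\Phi_J(\bar\bu|\bx,t) - \expectationJ{\varphi_J(\by|\bx,t)} = \left(\frac{\bar\bu-\bx}{t}\right) + \partial J(\bar\bu) - \expectationJ{\varphi_J(\by|\bx,t)},
\]
which forces $\partial J(\bar\bu)\neq\varnothing$, hence $\bar\bu\in\dom\partial J$, and after substituting the explicit form of $\expectationJ{\varphi_J(\by|\bx,t)}$ and rearranging, produces the inclusion~\eqref{eq:subdiff_rep1} asserted in~(i).

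For part~(ii), whenever $\dom J = \Rn$ the representation formula $\nabla_{\bx}S_\epsilon(\bx,t) = \expectationJ{\pi_{\partial J(\by)}(\boldsymbol{0})}$ from Proposition~\ref{prop:pm_rep_props}(i) makes the correction term in \eqref{eq:subdiff_rep1} vanish, so that the optimality condition collapses to $(\bx-\bar\bu)/t \in \partial J(\bar\bu)$. By Theorem~\ref{thm:e_u_nonvisc_hj}(ii) this is exactly the first-order condition that uniquely characterizes the MAP estimate, and the strong convexity of the objective then gives $\bar\bu = \bu_{MAP}(\bx,t)$. The main technical obstacle in this plan is the integrability check for $C(\bx,t,\epsilon)$: unlike in \cite{burger2014maximum}, one cannot invoke a uniform Lipschitz hypothesis on $J$ to bound $\|\pi_{\partial J(\by)}(\boldsymbol{0})\|$ under the posterior, so the argument must rely on the intrinsic posterior bounds of Propositions~\ref{prop:topo_properties} and~\ref{prop:bounds_limits} that were themselves derived from the viscous HJ-PDE connection of Theorem~\ref{thm:visc_hj_eqns}. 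Once that integrability is secured, the remaining Fermat-rule computation and the collapse in~(ii) are essentially mechanical.
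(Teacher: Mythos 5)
Your proposal is correct and follows essentially the same route as the paper's proof: decompose the Bregman loss via Fenchel's equality, pass the expectation through using $\expectationJ{(\by-\bx)/t}=-\nabla_{\bx}S_\epsilon(\bx,t)$ and the finiteness of $\expectationJ{\pi_{\partial J(\by)}(\boldsymbol{0})}$ from Corollary~\ref{cor:mmsp}, secure integrability of the $\bu$-independent remainder from the monotonicity inequality~\eqref{eq:monotonicity_prop}, and read off the inclusion from the first-order optimality condition for the strongly convex objective (the paper phrases this last step as an application of Theorem~\ref{thm:e_u_nonvisc_hj}(ii) to $\bu\mapsto J(\bu)+\langle\nabla_{\bx}S_\epsilon(\bx,t)-\expectationJ{\pi_{\partial J(\by)}(\boldsymbol{0})},\bu\rangle$, which is equivalent to your Fermat-rule argument). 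Part~(ii) is handled identically in both, via the representation formula of Proposition~\ref{prop:pm_rep_props}(i).
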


\begin{proof}
See Appendix E for the proof.
\end{proof}
\section{Conclusion} \label{sec:conclusion}

In this paper, we presented original connections between Hamilton--Jacobi partial differential equations and a broad class of Bayesian posterior mean estimators with Gaussian data fidelity term and log-concave prior relevant to image denoising problems. We derived representation formulas for the posterior mean estimate $\bu_{PM}(\bx,t,\epsilon)$ in terms of the spatial gradient of the solution to a viscous HJ PDE with initial data corresponding to the convex regularization term $J$. We used these connections that the posterior mean estimate can be expressed through the gradient of the solution to a first-order HJ PDE with twice continuously differentiable convex initial data. The connections between HJ PDEs and Bayesian posterior mean estimators were further used to establish several topological, representation, and monotonicity properties of posterior mean estimates. These properties were then used to derive an optimal upper bound for the mean squared error $\expectationJ{\normsq{\by-\bu_{PM}(\bx,t,\epsilon)}}$, several estimates on the MAP and posterior mean estimates, and the behavior of the posterior mean estimate $\bu_{PM}(\bx,t,\epsilon)$ in the limit $t \to 0$. Finally, we used the connections between both MAP and posterior mean estimates and HJ PDEs to show that the MAP estimate~\eqref{eq:minimizer_prob} corresponds to the Bayes estimator of the Bayesian risk~\eqref{eq:breg2} whenever the regularization term $J$ is convex on $\Rn$ and the data fidelity term is Gaussian. We also show that when $\dom J \neq \Rn$, the Bayesian risk~\eqref{eq:breg2} has still a Bayes estimator that is described in terms of the solution to both the first-order HJ PDE~\eqref{thm:e_u_nonvisc_hj} and the viscous HJ PDE~\eqref{thm:visc_hj_eqns}.

We wish to note that in addition to its relevance to image denoising problems, the viscous HJ PDE~\eqref{intro:pde} has recently received some attention in the deep learning literature, where its solution $\bx \mapsto S_\epsilon(\bx,t)$ is known as the local entropy loss function and is a loss regularization effective at training deep networks \cite{chaudhari2018deep,chaudhari2019entropy,garcia2019variational,vidal2017mathematics}. While this paper focuses on HJ PDEs and Bayesian estimators in imaging sciences, the results in this paper may be relevant to the deep learning literature and may give new theoretical understandings of the local entropy loss function in terms of the data $\bx$ and parameters $t$ and $\epsilon$.

The results presented in this work crucially depend on the data fidelity term being Gaussian and the generalized prior distribution $\by \mapsto e^{-J(\by)}$ being log-concave. This paper did not consider non-Gaussian data fidelity terms with log-concave priors, or non-additive noise models \cite{boncelet2009image,boyat2015review}.

\appendix

\section{Proof of Theorem~\ref{thm:visc_hj_eqns}} \label{app:A}
To prove Theorem~\ref{thm:visc_hj_eqns}, we will first use the following lemma, which characterizes the partition function \eqref{eq:partition_function} in terms of the solution to a Cauchy problem involving the heat equation with initial data $J \in \gmRn$. This connection will imply parts (i) and (ii)(a)-(d) of Theorem~\ref{thm:visc_hj_eqns}.

\begin{lem}[The heat equation with initial data in $\gmRn$] \label{lem:heat_eqn}
Suppose the function $J$ satisfies assumptions (A1)-(A3).
\begin{enumerate}
\item[(i)] For every $\epsilon>0$, the function $w_\epsilon\colon\Rn\times[0,+\infty)\to (0,1]$ defined by
\begin{equation}
w_{\epsilon}(\bx,t)\coloneqq \frac{1}{(2\pi t \epsilon)^{n/2}}Z_J(\bx,t,\epsilon) = \frac{1}{\left(2\pi t\epsilon\right)^{n/2}}\int_{\Rn}e^{-\left(\frac{1}{2t}\left\Vert \bx-\by\right\Vert _{2}^{2}+J(\by)\right)/\epsilon}\diff\by\label{eq:heat_w}
\end{equation}
is the unique smooth solution to the Cauchy problem
\begin{equation}
\begin{dcases}
\frac{\partial w_{\epsilon}}{\partial t}(\bx,t)=\frac{\epsilon}{2}\Laplacian w_{\epsilon}(\bx,t) & \mbox{in }\Rn\times(0,+\infty),\\
w_{\epsilon}(\bx,0)=e^{-J(\bx)/\epsilon} & \mbox{in }\Rn.
\end{dcases}\label{eq:cauchy_heat_prob}
\end{equation}
In addition, the domain of integration of the integral \eqref{eq:heat_w} can be taken to be $\dom J$ or, up to a set of Lebesgue measure zero, $\inter(\dom J)$
or $\dom\partial J$. Furthermore, for every $\bx\in\dom J$ and $\epsilon > 0$, except possibly at the boundary points $\bx\in(\dom J)\backslash(\inter(\dom J))$ if such points exist, the pointwise limit of $w_\epsilon(\bx,t)$ as $t \to 0$ exists and satisfies
\begin{equation*}
\lim_{\substack{t\to0\\ t>0}}w_{\epsilon}(\bx,t)=e^{-J(\bx)/\epsilon},
\end{equation*}
with the limit equal to $0$ whenever $\bx\notin\dom J$. If $\bx\in(\dom J)\backslash(\inter(\dom J))$, then we may only conclude 
\[
\limsup_{\substack{t\to0\\t>0}}w_{\epsilon}(\bx,t)\leqslant e^{-J(\bx)/\epsilon}
\]
and
\[
\liminf_{\substack{t\to0\\t>0}}w_{\epsilon}(\bx,t)\geqslant e^{-J(\bx)/\epsilon}\left(\frac{1}{(2\pi\epsilon)^{n/2}}\int_{\dom J}e^{-\frac{1}{2\epsilon}\left\Vert \bx-\by\right\Vert _{2}^{2}}\diff\by\right).
\]

\item[(ii)]  (Log-concavity and monotonicity properties).
\begin{enumerate}
\item The function $\Rn\times(0,+\infty)\ni(\bx,t)\mapsto t^{n/2}w_{\epsilon}(\bx,t)$ is jointly log-concave.
\item The function $(0,+\infty)\ni t\mapsto t^{n/2}w_{\epsilon}(\bx,t)$ is strictly monotone increasing.
\item The function $(0,+\infty)\ni\epsilon\mapsto\epsilon^{n/2}w_{\epsilon}(\bx,t)$ is strictly monotone increasing.
\item The function $\Rn\ni\bx\mapsto e^{\frac{1}{2t\epsilon}\normsq{\bx}}w_{\epsilon}(\bx,t)$ is strictly log-convex.
\end{enumerate}
\end{enumerate}
\end{lem}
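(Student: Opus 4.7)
The plan is to identify $w_\epsilon(\bx,t)$ as the convolution of the bounded nonnegative function $e^{-J/\epsilon}$ with the Gaussian heat kernel at time $t\epsilon$, and then combine standard parabolic theory with convex-analytic tools (Prékopa's theorem, perspective functions, Hölder's inequality) to obtain the monotonicity and log-concavity statements.

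\textbf{Part (i).} Assumptions (A1) and (A3) give $0\leqslant e^{-J/\epsilon}\leqslant 1$, so the integral defining $w_\epsilon$ converges absolutely and lies in $(0,1]$. Smoothness on $\Rn\times(0,+\infty)$ and satisfaction of $\partial_t w_\epsilon = \tfrac{\epsilon}{2}\Laplacian w_\epsilon$ follow by differentiating under the integral sign, justified by the rapid decay of the heat kernel; uniqueness among bounded solutions is the standard maximum-principle argument. The identifications of the domain of integration use that $J\equiv+\infty$ off $\dom J$ and that $\dom J\setminus\inter\dom J = \bd\dom J$ and $\dom J\setminus\dom\partial J\subset\bd\dom J$ have Lebesgue measure zero for the convex set $\dom J$ (Definition~\ref{def:convex_sets}). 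For the limit as $t\to 0^+$, I would exploit that $J$ is continuous on $\inter\dom J$ (a standard property of $J\in\gmRn$), so the classical approximation-of-identity argument gives the pointwise limit $e^{-J(\bx)/\epsilon}$ there, while for $\bx\notin\dom J$ a dominated-convergence argument against the Gaussian gives $0$. At a boundary point $\bx\in(\dom J)\setminus\inter\dom J$, the $\limsup$ bound comes from upper semicontinuity of $e^{-J/\epsilon}$ combined with the same approximation-of-identity estimate, and the $\liminf$ bound is obtained by rescaling $\by=\bx+\sqrt{t}\,\boldsymbol{z}$ and applying Fatou's lemma on the rescaled domain $\{\boldsymbol{z}\colon\bx+\sqrt{t}\boldsymbol{z}\in\dom J\}$, using convexity of $J$ along segments from $\bx$ into $\dom J$ to control $J(\bx+\sqrt{t}\boldsymbol{z})$ by $J(\bx)$ in the limit.

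\textbf{Part (ii).} For (a), write
\[
t^{n/2}w_{\epsilon}(\bx,t) = \frac{1}{(2\pi\epsilon)^{n/2}}\int_{\Rn}\exp\!\left(-\frac{1}{\epsilon}\Bigl[\frac{\normsq{\bx-\by}}{2t}+J(\by)\Bigr]\right)\diff\by.
\]
The map $(\bx,\by,t)\mapsto \normsq{\bx-\by}/(2t)$ is jointly convex on $\Rn\times\Rn\times(0,+\infty)$ because it is the perspective of the convex function $\tfrac12\normsq{\cdot}$ evaluated at $\bx-\by$; adding the convex function $J(\by)$ preserves joint convexity, so the integrand is jointly log-concave in $(\bx,\by,t)$ and Prékopa's theorem on the preservation of log-concavity under marginalization yields (a). For (b) and (c), differentiation under the integral gives
\[
\partial_{t}[t^{n/2}w_{\epsilon}(\bx,t)] = \frac{1}{(2\pi\epsilon)^{n/2}}\int_{\Rn}\frac{\normsq{\bx-\by}}{2t^{2}\epsilon}\,e^{-(\normsq{\bx-\by}/(2t)+J(\by))/\epsilon}\diff\by,
\]
strictly positive because the integrand is nonnegative and positive on a set of positive Lebesgue measure (assumption (A2) gives $\inter\dom J\neq\varnothing$); the argument for $\partial_\epsilon[\epsilon^{n/2}w_\epsilon(\bx,t)]$ is entirely analogous, additionally using $J\geqslant 0$ from assumption (A3). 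For (d), completing the square in $\bx$ gives
\[
e^{\normsq{\bx}/(2t\epsilon)}w_{\epsilon}(\bx,t)=\frac{1}{(2\pi t\epsilon)^{n/2}}\int_{\Rn}e^{\langle\bx,\by\rangle/(t\epsilon)}\,e^{-\normsq{\by}/(2t\epsilon)-J(\by)/\epsilon}\diff\by,
\]
which, up to a positive $\bx$-independent factor, is the Laplace (moment-generating) transform of a finite positive measure. Hölder's inequality shows every such Laplace transform is log-convex, and strict log-convexity follows because assumption (A2) ensures the measure has support of nonempty interior so Hölder is strict.

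\textbf{Expected main obstacle.} The cleanest steps are the PDE/smoothness verification and the Laplace-transform and Prékopa arguments for (ii). The delicate point is the $\liminf$ lower bound in (i) at boundary points $\bx\in(\dom J)\setminus\inter\dom J$: at such points $e^{-J/\epsilon}$ need not be continuous and the usual mollifier calculation returns only the upper estimate, so one must work with the rescaled kernel and quantitatively use convexity of $J$ along segments entering $\dom J$ to extract the stated integral of the Gaussian restricted to $\dom J$.
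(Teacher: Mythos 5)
Your proposal is correct and follows essentially the same route as the paper: Widder-type existence/uniqueness for the heat equation with bounded nonnegative data, the rescaling $\by=\bx-\sqrt{t}\,\boldsymbol{z}$ with Fatou/reverse-Fatou and the convexity estimate $J(\bx-\sqrt{t}\boldsymbol{z})\leqslant(1-\sqrt{t})J(\bx)+\sqrt{t}J(\bx-\boldsymbol{z})$ for the boundary $\liminf$, Pr\'ekopa--Leindler (equivalently, Pr\'ekopa's marginalization theorem applied to the jointly convex perspective function) for (ii)(a), and H\"older for the strict log-convexity in (ii)(d). Your only deviations are cosmetic: you prove (ii)(b)--(c) by differentiating under the integral sign rather than by integrating the pointwise strict monotonicity of the integrand, which is an equally valid one-line variant.
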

The proof of (i) follows from classical PDEs arguments for the Cauchy problem \eqref{eq:cauchy_heat_prob} tailored to the initial data $(\bx,\epsilon)\mapsto e^{-J(\bx)/\epsilon}$ with $J$ satisfying assumptions (A1)-(A3), and the proof of log-concavity and monotonicity (ii)(a)-(d) follows from the Pr\'ekopa--Leindler and H\"older's inequalities \citep{leindler1972certain, prekopa1971logarithmic,folland2013real}; we present the details below.

\begin{proof}
Proof of Lemma~\ref{lem:heat_eqn} (i): By assumptions (A1) and (A2), there exists a point $\by_{0}\in\inter(\dom J)$ and a number $\delta>0$ such that the open ball $B_{\delta}(\by_{0})$ is contained in $\inter(\dom J)$ and $e^{-J(\by)/\epsilon}>0$ whenever $\by\in B_{\delta}(\by_{0})$. Since assumption (A3) yields $e^{-J(\by)/\epsilon}\leqslant1$ for every $\by\in\Rn$, these observations imply that the Cauchy problem \eqref{eq:cauchy_heat_prob} has a unique, smooth solution defined by equation \eqref{eq:heat_w}, with $0<w_\epsilon(\bx,t)\leqslant1$ for every $\bx\in\Rn$, $t>0$, and $\epsilon>0$ (see \citet{widder1976heat}, Theorem 1 in Chapter VII for global existence and smoothness properties of solutions, and Theorem 2.2 in Chapter VIII for uniqueness of solutions, and note that the results in \cite{widder1976heat} are for $n=1$ but can be extended without difficulty to $n>1$). In addition, as $e^{-J(\by)/\epsilon}=0$ for every $\by\notin\dom J$, the domain of integration of \eqref{eq:heat_w} can be taken to be $\dom J$, as the boundary points of the domain of $J$ is a set of Lebesgue measure zero relative to $\Rn$ (see definition \ref{def:convex_sets}), the domain of integration can be further taken to be $\inter(\dom J)$ or $\dom\partial J$.

Now, we will use Fatou's lemma (\citep{folland2013real}, Lemma 2.18) to compute bounds for the two limits $\limsup_{\substack{t\to0\\t>0}}w_{\epsilon}(\bx,t)$ and $\liminf_{\substack{t\to0\\t>0}}w_{\epsilon}(\bx,t)$ for every $\bx\in\Rn$ and $\epsilon>0$. First, note that the change of variables $\frac{\bx-\by}{\sqrt{t}}\mapsto\by$ in \eqref{eq:heat_w} yields
\[
w_{\epsilon}(\bx,t)=\frac{1}{(2\pi\epsilon)^{n/2}}\int_{\Rn}e^{-\left(\frac{1}{2}\left\Vert \by\right\Vert _{2}^{2}+J(\bx-\sqrt{t}\by)\right)/\epsilon}\diff\by,
\]
so that the function
\[
\by\mapsto e^{-\frac{1}{2\epsilon}\left\Vert y\right\Vert _{2}^{2}}\left(1-e^{-J(\bx-\sqrt{t}\by)/\epsilon}\right)
\]
is non-negative. The reverse Fatou's lemma therefore also applies to this function, and hence
\[
\limsup_{\substack{t\to0\\t>0}}w_{\epsilon}(\bx,t)\leqslant\frac{1}{(2\pi\epsilon)^{n/2}}\int_{\Rn}e^{-\frac{1}{2\epsilon}\left\Vert \by\right\Vert _{2}^{2}}\left(\limsup_{\substack{t\to0\\t>0}}e^{-J(\bx-\sqrt{t}\by)/\epsilon}\right)\diff\by.
\]
Using the lower semicontinuity of $J$, the limit inside the integral satisfies
\begin{eqnarray*}
\limsup_{\substack{t\to0\\t>0}}e^{-J(\bx-\sqrt{t}\by)/\epsilon} & = & e^{\limsup_{\substack{t\to0\\t>0}}-J(\bx-\sqrt{t}\by)/\epsilon}\\
& = & e^{-\liminf_{\substack{t\to0\\t>0}}J(\bx-\sqrt{t}\by)/\epsilon}\\
& \leqslant & e^{-J(\bx)/\epsilon},\quad\forall\by\in\Rn,
\end{eqnarray*}
and therefore $\limsup_{\substack{t\to0\\t>0}}w_{\epsilon}(\bx,t)\leqslant e^{-J(\bx)/\epsilon}$ for every $\bx\in\Rn$. If $\bx\notin\dom J$, then
\[
0\leqslant\liminf_{\substack{t\to0\\t>0}}e^{-J(\bx-\sqrt{t}\by)/\epsilon}\leqslant\limsup_{\substack{t\to0\\t>0}}e^{-J(\bx-\sqrt{t}\by)/\epsilon}\leqslant0,
\]
which implies $\lim_{\substack{t\to0\\t>0}}w_{\epsilon}(\bx,t)=0$ for every $\bx\notin\dom J$. Suppose now that $\bx\in\dom J$. By Fatou's lemma, 
\[
\liminf_{\substack{t\to0\\t>0}}w_{\epsilon}(\bx,t)\geqslant\frac{1}{(2\pi\epsilon)^{n/2}}\int_{\Rn}e^{-\frac{1}{2\epsilon}\left\Vert \by\right\Vert _{2}^{2}}\left(\liminf_{\substack{t\to0\\t>0}}e^{-J(\bx-\sqrt{t}\by)/\epsilon}\right)\diff\by.
\]
If $\bx\in\inter(\dom J)$, then by continuity $\lim_{\substack{t\to0\\t>0}}J(\bx-\sqrt{t}\by)=J(\bx)$ for every $\by\in\Rn$, and so $\liminf_{\substack{t\to0\\t>0}}w_{\epsilon}(\bx,t)\geqslant e^{-J(\bx)/\epsilon}$. Combined with the $\limsup$, we find $\lim_{\substack{t\to0\\t>0}}w_{\epsilon}(\bx,t)=e^{-J(\bx)/\epsilon}$ for every $\bx \in \inter(\dom J)$ and $\bx \notin \dom J$. If $\bx\in(\dom J)\backslash\inter(\dom J)$, if any such point exists, and $0<t\leqslant1$, then convexity of $J$ implies
\begin{align*}
J(\bx-\sqrt{t}\by) & =J((1-\sqrt{t})\bx+\sqrt{t}(\bx-\by))\\
& \leqslant(1-\sqrt{t})J(\bx)+\sqrt{t}J(\bx-\by),
\end{align*}
and hence for any $0<t\leqslant1$,
\begin{align*}
w_{\epsilon}(\bx,t) & \geqslant e^{-(1-\sqrt{t})J(\bx)/\epsilon}\frac{1}{(2\pi\epsilon)^{n/2}}\int_{\Rn}e^{-\frac{1}{2\epsilon}\left\Vert \by\right\Vert _{2}^{2}}e^{-\sqrt{t}J(\bx-\by)/\epsilon}\diff\by,\\
& =e^{-(1-\sqrt{t})J(\bx)/\epsilon}\frac{1}{(2\pi\epsilon)^{n/2}}\int_{\Rn}e^{-\frac{1}{2\epsilon}\left\Vert \bx-\by\right\Vert _{2}^{2}}e^{-\sqrt{t}J(\by)/\epsilon}\diff\by.
\end{align*}
Since $\liminf_{\substack{t\to0\\t>0}}e^{-\sqrt{t}J(\by)/\epsilon} = 1$ for every $\by\in\dom J$ and is equal to $0$ for every $\by\notin\dom J$, Fatou's lemma yields
\[
\liminf_{\substack{t\to0\\t>0}}w_{\epsilon}(\bx,t)\geqslant e^{-J(\bx)/\epsilon}\left(\frac{1}{(2\pi\epsilon)^{n/2}}\int_{\dom J}e^{-\frac{1}{2\epsilon}\left\Vert \bx-\by\right\Vert _{2}^{2}}\diff\by\right)
\]
for every $\bx\in(\dom J)\backslash\inter(\dom J)$, if any such point exists.

Proof of Lemma~\ref{lem:heat_eqn} (ii)(a): The log-concavity property will be shown using the Pr\'ekopa--Leindler inequality.
\begin{thm}\label{thm:prekopa}
[Pr\'ekopa--Leindler inequality \citep{leindler1972certain, prekopa1971logarithmic}]
Let $f$, $g,$ and $h$ be non-negative real-valued and Borel measurable functions on $\Rn$, and suppose
\[
h(\lambda\by_{1}+(1-\lambda)\by_{2})\geqslant f(\by_{1})^{\lambda}g(\by_{2})^{(1-\lambda)}
\]
for every $\by_{1},$ $\by_{2}\in\Rn$ and $\lambda\in(0,1)$. Then
\[
\int_{\Rn}h(\by)\diff\by\geqslant\left(\int_{\Rn}f(\by)\diff\by\right)^{\lambda}\left(\int_{\Rn}g(\by)\diff\by\right)^{(1-\lambda)}.
\]
\end{thm}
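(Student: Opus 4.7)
The plan is to prove Theorem \ref{thm:prekopa} by induction on the dimension $n$, reducing the general case to one dimension via Fubini's theorem, and proving the one-dimensional case via a layer-cake decomposition combined with the one-dimensional Brunn--Minkowski inequality. This follows the classical Brascamp--Lieb route.

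First I would establish the one-dimensional Brunn--Minkowski inequality: for nonempty compact $A,B\subset\R$ and $\lambda\in(0,1)$,
$$|\lambda A + (1-\lambda)B| \geqslant \lambda|A| + (1-\lambda)|B| \geqslant |A|^{\lambda}|B|^{1-\lambda},$$
the second inequality being AM--GM. For the first, by translation invariance I may assume $\sup(\lambda A) = 0$ and $\inf((1-\lambda)B) = 0$, so that by compactness $0\in\lambda A \cap (1-\lambda)B$. Then $\lambda A = \lambda A + 0 \subset \lambda A + (1-\lambda)B$ and similarly $(1-\lambda)B \subset \lambda A + (1-\lambda)B$. Since $\lambda A\subset(-\infty,0]$ and $(1-\lambda)B\subset[0,+\infty)$, their intersection has Lebesgue measure zero, hence $|\lambda A + (1-\lambda)B|\geqslant|\lambda A \cup (1-\lambda)B| = \lambda|A|+(1-\lambda)|B|$. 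Extension to general Borel sets follows by inner regularity.

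Using this, I would prove the one-dimensional Prékopa--Leindler inequality. By homogeneity I reduce to the case $\|f\|_{\infty}=\|g\|_{\infty}=1$ (replacing $f,g,h$ by $f/\|f\|_{\infty}$, $g/\|g\|_{\infty}$, $h/(\|f\|_{\infty}^{\lambda}\|g\|_{\infty}^{1-\lambda})$; unbounded $f,g$ are handled via truncation and monotone convergence). For every $t\in(0,1)$, the hypothesis implies
$$\{h > t\} \supset \lambda\{f>t\} + (1-\lambda)\{g>t\},$$
since $f(\by_{1}),g(\by_{2})>t$ forces $h(\lambda\by_{1}+(1-\lambda)\by_{2})\geqslant f(\by_{1})^{\lambda}g(\by_{2})^{1-\lambda}>t$. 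Combining with the one-dimensional Brunn--Minkowski inequality and Fubini's layer-cake formula yields
$$\int_{\R} h\,\diff y \geqslant \int_{0}^{1}|\{h>t\}|\,\diff t \geqslant \int_{0}^{1}\bigl(\lambda|\{f>t\}| + (1-\lambda)|\{g>t\}|\bigr)\,\diff t = \lambda\int_{\R} f + (1-\lambda)\int_{\R} g,$$
which by AM--GM dominates $\bigl(\int f\bigr)^{\lambda}\bigl(\int g\bigr)^{1-\lambda}$.

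For the inductive step, assume the theorem holds in dimension $n-1$. Writing $\bx=(x_{1},\bx')\in\R\times\R^{n-1}$, define $F(a)=\int_{\R^{n-1}}f(a,\bx')\,\diff\bx'$ and analogously $G,H$. For $a,b\in\R$ and $c=\lambda a + (1-\lambda)b$, the pointwise hypothesis gives $h(c,\lambda\bx'_{1}+(1-\lambda)\bx'_{2})\geqslant f(a,\bx'_{1})^{\lambda}g(b,\bx'_{2})^{1-\lambda}$, so the inductive hypothesis in $\R^{n-1}$ yields $H(c)\geqslant F(a)^{\lambda}G(b)^{1-\lambda}$. Applying the one-dimensional case to $F,G,H$ and invoking Fubini finishes the argument. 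The main obstacle is the measurability of the Minkowski sum $\lambda\{f>t\}+(1-\lambda)\{g>t\}$, which need not be Borel even when the summands are; I would handle this by passing to inner Lebesgue measure (or approximating the superlevel sets by compact subsets via inner regularity and taking a limit), together with the truncation argument needed to reduce to bounded $f,g$ with finite integral.
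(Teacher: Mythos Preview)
Your proof is correct and follows the classical Brascamp--Lieb approach: induction on dimension, with the base case handled via the layer-cake decomposition and the one-dimensional Brunn--Minkowski inequality. The reductions (normalization to $\|f\|_\infty=\|g\|_\infty=1$, truncation for unbounded $f,g$, inner regularity for the measurability of Minkowski sums) are the standard ones, and you have flagged them appropriately. One small point worth making explicit in the one-dimensional step: the Brunn--Minkowski lower bound $|\{h>t\}|\geqslant\lambda|\{f>t\}|+(1-\lambda)|\{g>t\}|$ requires both superlevel sets to be nonempty, and this is exactly what the normalization $\|f\|_\infty=\|g\|_\infty=1$ guarantees for $t\in(0,1)$; you use this implicitly but it deserves a sentence.

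As for comparison with the paper: there is nothing to compare. The paper does not prove Theorem~\ref{thm:prekopa}; it merely states it with citations to \cite{leindler1972certain,prekopa1971logarithmic} and then applies it as a black box to establish the joint log-concavity of $(\bx,t)\mapsto t^{n/2}w_\epsilon(\bx,t)$ in Lemma~\ref{lem:heat_eqn}(ii)(a). Your proposal therefore supplies a self-contained argument where the paper defers to the literature.
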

Let $\epsilon>0$, $\lambda\in(0,1)$, $\bx=\lambda\bx_{1}+(1-\lambda)\bx_{2}$, $\by=\lambda\by_{1}+(1-\lambda)\by_{2}$, and $t=\lambda t_{1}+(1-\lambda)t_{2}$ for any $\bx_{1},\,\bx_{2},\,\by_{1},\,\by_{2}\in\Rn$ and $t_{1},\,t_{2}\in(0,+\infty)$. The joint convexity of the function $\Rn\times(0,+\infty)\ni(\boldsymbol{z},t)\mapsto\frac{1}{2t}\left\Vert \boldsymbol{z}\right\Vert _{2}^{2}$ and convexity of $J$ imply
\[
\frac{1}{2t}\left\Vert \bx-\by\right\Vert _{2}^{2}+J(\by) \leqslant \frac{\lambda}{2t_{1}}\left\Vert \bx_{1}-\by_{1}\right\Vert _{2}^{2}+\frac{(1-\lambda)}{2t_{2}}\left\Vert \bx_{2}-\by_{2}\right\Vert _{2}^{2}+\lambda J(\by_{1})+(1-\lambda)J(\by_{2}),
\]
This gives
\[
\frac{e^{-\left(\frac{1}{2t}\left\Vert \bx-\by\right\Vert _{2}^{2}+J(\by)\right)/\epsilon}}{(2\pi\epsilon)^{n/2}}\geqslant\left(\frac{e^{-\left(\frac{1}{2t_{1}}\left\Vert \bx_{1}-\by_{1}\right\Vert _{2}^{2}+J(\by_{1})\right)/\epsilon}}{(2\pi\epsilon)^{n/2}}\right)^{\lambda}\left(\frac{e^{-\left(\frac{1}{2t_{2}}\left\Vert \bx_{2}-\by_{2}\right\Vert _{2}^{2}+J(\by_{2})\right)/\epsilon}}{(2\pi\epsilon)^{n/2}}\right)^{1-\lambda}.
\]
Applying the Pr\'ekopa--Leindler inequality with
\[
h(\by)=\frac{e^{-\left(\frac{1}{2t}\left\Vert \bx-\by\right\Vert _{2}^{2}+J(\by)\right)/\epsilon}}{(2\pi\epsilon)^{n/2}},
\]
\[
f(\by)=\frac{e^{-\left(\frac{1}{2t_{1}}\left\Vert \bx_{1}-\by\right\Vert _{2}^{2}+J(\by)\right)/\epsilon}}{(2\pi\epsilon)^{n/2}},
\]
and 
\[
g(\by)=\frac{e^{-\left(\frac{1}{2t_{2}}\left\Vert \bx_{2}-\by\right\Vert _{2}^{2}+J(\by)\right)/\epsilon}}{(2\pi\epsilon)^{n/2}},
\]
and using the definition \eqref{eq:heat_w} of $w_{\epsilon}(\bx,t)$, we get
\[
t^{n/2}w_{\epsilon}(\bx,t)\geqslant\left(t_{1}^{n/2}w_{\epsilon}(\bx_{1},t_{1})\right)^{\lambda}\left(t_{2}^{n/2}w_{\epsilon}(\bx_{2},t_{2})\right)^{(1-\lambda)},
\]
As a result, the function $(\bx,t)\mapsto t^{n/2}w_{\epsilon}(\bx,t)$ is jointly log-concave on $\Rn\times(0,+\infty)$.

Proof of Lemma~\ref{lem:heat_eqn} (ii)(b): Since $t\mapsto\frac{1}{t}$ is strictly monotone decreasing on $(0,+\infty)$, for any $\bx\in\Rn$, $\epsilon>0$, and $0<t_1<t_2$,
\[
\frac{e^{-\left(\frac{1}{2t_1}\left\Vert \bx-\by\right\Vert _{2}^{2}+J(\by)\right)/\epsilon}}{(2\pi\epsilon)^{n/2}}<\frac{e^{-\left(\frac{1}{2t_2}\left\Vert \bx-\by\right\Vert _{2}^{2}+J(\by)\right)/\epsilon}}{(2\pi\epsilon)^{n/2}}
\]
whenever $\bx\neq\by$. Integrating both sides of the inequality with respect to $\by$ over $\dom J$ yields
\[
\frac{1}{(2\pi\epsilon)^{n/2}}\int_{\dom J}e^{-\left(\frac{1}{2t_1}\left\Vert \bx-\by\right\Vert _{2}^{2}+J(\by)\right)/\epsilon}\diff\by<\frac{1}{(2\pi\epsilon)^{n/2}}\int_{\dom J}e^{-\left(\frac{1}{2t_2}\left\Vert \bx-\by\right\Vert _{2}^{2}+J(\by)\right)/\epsilon}\diff\by,
\]
As a result, the function $t\mapsto t^{n/2}w_{\epsilon}(\bx,t)$ is strictly monotone increasing on $(0,+\infty)$. 

Proof of Lemma~\ref{lem:heat_eqn} (ii)(c): Since $\epsilon\mapsto\frac{1}{\epsilon}$ is strictly monotone decreasing on $(0,+\infty)$ and $\by\mapsto J(\by)$ is non-negative by assumption (A3), for any $\bx\in\Rn$, $t>0$, and $0<\epsilon_1<\epsilon_2$,
\[
e^{-\left(\frac{1}{2t}\left\Vert \bx-\by\right\Vert _{2}^{2}+J(\by)\right)/\epsilon_1}<e^{-\left(\frac{1}{2t}\left\Vert \bx-\by\right\Vert _{2}^{2}+J(\by)\right)/\epsilon_2}
\]
whenever $\bx\neq\by$. Integrating both sides of the inequality with respect to $\by$ over $\dom J$ yields 
\[
\int_{\dom J}e^{-\left(\frac{1}{2t}\left\Vert \bx-\by\right\Vert _{2}^{2}+J(\by)\right)/\epsilon_1}d\by<\int_{\dom J}e^{-\left(\frac{1}{2t}\left\Vert \bx-\by\right\Vert _{2}^{2}+J(\by)\right)/\epsilon_2}\diff\by,
\]
As a result, the function $\epsilon\mapsto\epsilon^{n/2}w_{\epsilon}(\bx,t)$ is strictly monotone increasing on $(0,+\infty)$.

Proof of Lemma~\ref{lem:heat_eqn} (ii)(d): Let $\epsilon>0$, $t>0$, $\lambda\in(0,1)$, $\bx_1,\bx_2\in \Rn$ with $\bx_1\neq\bx_2$ and $\bx=\lambda\bx_{1}+(1-\lambda)\bx_{2}$. Then
\begin{align*}
    e^{\frac{1}{2t\epsilon}\normsq{\bx}}w_\epsilon(\bx,t) &= \frac{1}{(2\pi t\epsilon)^{n/2}}\int_{\dom J} e^{\left(\left\langle \bx,\by\right\rangle/t -\frac{1}{2t}\normsq{\by} - J(\by)\right)/\epsilon}\diff\by \\
    &=\int_{\dom J} \left(\frac{e^{\left(\left\langle \bx_1,\by\right\rangle/t-\frac{1}{2t}\normsq{\by} - J(\by)\right)/\epsilon}}{(2\pi t \epsilon)^{n/2}}\right)^\lambda \left(\frac{e^{\left\langle \bx_2,\by\right\rangle/t\epsilon-\frac{1}{2t}\normsq{\by} - J(\by)/\epsilon}}{(2\pi t \epsilon)^{n/2}}\right)^{1-\lambda} \diff\by.
\end{align*}
H\"older's inequality (\citep{folland2013real}, theorem 6.2) then implies
\begin{align*}
    e^{\frac{1}{2t\epsilon}\normsq{\bx}}w_\epsilon(\bx,t) &\leqslant \left(\int_{\dom J} \frac{e^{\left(\left\langle \bx_1,\by\right\rangle/t-\frac{1}{2t}\normsq{\by} - J(\by)\right)/\epsilon}}{(2\pi t \epsilon)^{n/2}} \diff\by\right)^\lambda \left(\int_{\dom J}  \frac{e^{\left\langle \bx_2,\by\right\rangle/t\epsilon-\frac{1}{2t}\normsq{\by} - J(\by)/\epsilon}}{(2\pi t \epsilon)^{n/2}} \diff\by \right)^{1-\lambda} \\
    &=\left(e^{\frac{1}{2t\epsilon}\normsq{\bx_1}}w_\epsilon(\bx_1,t)\right)^\lambda \left(e^{\frac{1}{2t\epsilon}\normsq{\bx_2}}w_\epsilon(\bx_2,t)\right)^{1-\lambda},
\end{align*}
where the inequality in the equation above is an equality if and only if there exists a constant $\alpha \in \mathbb{R}$ such that $\alpha e^{\left\langle \bx_1,\by\right\rangle/t\epsilon} = e^{\left\langle \bx_x,\by\right\rangle/t\epsilon}$
for almost every $\by \in \dom J$. This does not hold here since $\bx_1\neq\bx_2$. As a result, the function $\Rn\ni\bx\mapsto e^{\frac{1}{2t\epsilon}\normsq{\bx}}w_{\epsilon}(\bx,t)$ is strictly log-convex.
\end{proof}

Proof of Theorem~\ref{thm:visc_hj_eqns} (i) and (ii)(a)-(d): The proof of these follow from Lemma~\ref{lem:heat_eqn} and classic results about the Cole--Hopf transform (see, e.g., \citep{evans1998partial}, Section 4.4.1), with $S_\epsilon(\bx,t) \coloneqq -\epsilon\log(w_\epsilon(\bx,t))$.

Proof of Theorem~\ref{thm:visc_hj_eqns} (iii): The formulas follow from a straightforward calculation of the gradient, divergence, and Laplacian of $S_\epsilon(\bx,t)$ that we omit here. Since the function $\bx \mapsto \frac{1}{2}\normsq{\bx}-tS_{\epsilon}(\bx,t)$ is strictly convex, we can invoke (Corollary 26.3.1, \cite{rockafellar1970convex}) to conclude that its gradient, which is precisely $\bu_{PM}(\bx,t,\epsilon)$, is bijective.

Proof of Theorem~\ref{thm:visc_hj_eqns} (iv): The proof we present here is based on techniques from large deviation theory in probability theory \citep{dembo38large,deuschel2001large,varadhan2016large} tailored to equation \eqref{eq:hj_s_eps} and is adapted from Lemmas 2.1.7 and 2.1.8 of Deuschel and Stroock's book on Large Deviations \citep{deuschel2001large}. We proceed in three steps:
\begin{enumerate}
\item[Step 1.] Show that 
\[
\limsup_{\substack{\epsilon\to0\\
\epsilon>0
}
}S_{\epsilon}(\bx,t)\leqslant\inf_{\by\in\inter(\dom J)}\left\{ \frac{1}{2t}\left\Vert \bx-\by\right\Vert _{2}^{2}+J(\by)\right\} 
\]
 and 
\[
\inf_{\by\in\inter(\dom J)}\left\{ \frac{1}{2t}\left\Vert \bx-\by\right\Vert _{2}^{2}+J(\by)\right\} = \inf_{\by\in\dom J}\left\{ \frac{1}{2t}\left\Vert \bx-\by\right\Vert _{2}^{2}+J(\by)\right\} = S_0(\bx,t).
\]
\item[Step 2.] Show that $\liminf_{\substack{\epsilon\to0\\
\epsilon>0
}
}S_{\epsilon}(\bx,t)\geqslant S_0(\bx,t)$. 
\item[Step 3.] Conclude that $\lim_{\substack{\epsilon\to0\\
\epsilon>0
}
}S_{\epsilon}(\bx,t)=S_0(\bx,t)$. Pointwise and local uniform convergence of the gradient $\lim_{\substack{\epsilon\to0\\
\epsilon>0
}
}\nabla_{\bx}S_{\epsilon}(\bx,t)=\nabla_{\bx}S_0(\bx,t)$, the partial derivative $\lim_{\substack{\epsilon\to0\\
\epsilon>0
}
}\frac{\partial S_{\epsilon}(\bx,t)}{\partial t}=\frac{\partial S_0(\bx,t)}{\partial t}$, and the Laplacian $\lim_{\substack{\epsilon\to0\\
\epsilon>0}}\frac{\epsilon}{2}\Laplacian S_{\epsilon}(\bx,t)=0$ then follow from the convexity and differentiability of the solutions $(\bx,t)\mapsto S_0(\bx,t)$ and $(\bx,t)\mapsto S_{\epsilon}(\bx,t)$ to the HJ PDEs~\eqref{eq:hj_non_visc_pde} and~\eqref{eq:hj_visc_pde}.
\end{enumerate}

We will use the following large deviation principle \citep{deuschel2001large}: For every Lebesgue measurable set $\mathcal{A} \in \Rn$,
\[
\lim_{\substack{\epsilon\to0\\
\epsilon>0
}
}-\epsilon\ln\left(\frac{1}{(2\pi t\epsilon)^{n/2}}\int_{\mathcal{A}}e^{-\frac{1}{2t\epsilon}\left\Vert \bx-\by\right\Vert _{2}^{2}}\diff\by\right)=\essinf_{\by\in\mathcal{A}}\frac{1}{2t}\left\Vert \bx-\by\right\Vert _{2}^{2},
\]
where essential infimum means infimum that holds almost everywhere, that is, if the essential infimum above is attained at $\boldsymbol{a}\in\mathcal{A}$, then $\left\Vert \bx-\boldsymbol{a}\right\Vert _{2}^{2}/2t\leqslant\left\Vert \bx-\by\right\Vert _{2}^{2}/2t$ for almost every $\by\in\mathcal{A}$. 

Step 1. (Adapted from \citet{deuschel2001large}, Lemma 2.1.7.) By convexity, the function $J$ is continuous for every $\by_{0}\in\inter(\dom J)$, the latter set being open. Therefore, for every such $\by_{0}$ there exists a number $r_{\by_{0}}>0$ such that for every $0<r\leqslant r_{\by_{0}}$ the open ball $B_{r}(\by_{0})$ is contained in $\inter(\dom J)$. Hence
\begin{align*}
S_{\epsilon}(\bx,t) & \coloneqq-\epsilon\ln\left(\frac{1}{(2\pi t\epsilon)^{n/2}}\int_{\inter(\dom J)}e^{-(\frac{1}{2t}\normsq{\bx-\by}+J(\by))/\epsilon}\diff\by\right)\\
 & \leqslant-\epsilon\ln\left(\frac{1}{(2\pi t\epsilon)^{n/2}}\int_{B_{r}(\by_{0})}e^{-(\frac{1}{2t}\normsq{\bx-\by}+J(\by))/\epsilon}\diff\by\right)\\
 & \leqslant-\epsilon\ln\left(\frac{1}{(2\pi t\epsilon)^{n/2}}\int_{B_{r}(\by_{0})}e^{-\frac{1}{2t\epsilon}\normsq{\bx-\by}}\diff\by\right)+\sup_{\by\in B_{r}(\by_{0})}J(\by).
\end{align*}
Take $\limsup_{\substack{\epsilon\to0\\ \epsilon>0}}$ and apply the large deviation principle to the term on the right to get
\[
\limsup_{\substack{\epsilon\to0\\\epsilon>0}}S_{\epsilon}(\bx,t)\leqslant\essinf_{\by\in B_{r}(\by_{0})}\frac{1}{2t}\left\Vert \bx-\by\right\Vert _{2}^{2}+\sup_{\by\in B_{r}(\by_{0})}J(\by).
\]
Taking $\lim_{r\to0}$ on both sides of the inequality yields
\[
\limsup_{\substack{\epsilon\to0\\\epsilon>0}}S_{\epsilon}(\bx,t)\leqslant\frac{1}{2t}\left\Vert \bx-\by_{0}\right\Vert _{2}^{2}+J(\by_{0}).
\]
Since the inequality holds for every $\by_{0}\in\inter(\dom J)$, we can take the infimum over all $y\in\inter(\dom J)$ on the right-hand-side of the inequality to get
\[
\limsup_{\substack{\epsilon\to0\\\epsilon>0}}S_{\epsilon}(\bx,t)\leqslant\inf_{\by\in\inter(\dom J)}\left\{ \frac{1}{2t}\left\Vert \bx-\by\right\Vert _{2}^{2}+J(\by)\right\}.
\]
By convexity of $J$, the infimum on the right hand side is equal to that taken over $\dom J$ (\citep{rockafellar1970convex}, Corollary 7.3.2), i.e.,
\begin{equation*}
\inf_{\by\in\inter(\dom J)}\left\{ \frac{1}{2t}\left\Vert \bx-\by\right\Vert _{2}^{2}+J(\by)\right\}   =  \inf_{\by\in\dom J}\left\{ \frac{1}{2t}\left\Vert \bx-\by\right\Vert _{2}^{2}+J(\by)\right\} \equiv S_0(\bx,t).
\end{equation*}
Hence
\[\limsup_{\substack{\epsilon\to0\\\epsilon>0}}S_{\epsilon}(\bx,t)\leqslant S_0(\bx,t).
\]

Step 2. We can directly invoke (\citet{deuschel2001large}, Lemma 2.1.8) to get\footnote{In the notation of \citet{deuschel2001large}, $\Phi=-J$, which is upper semicontinuous, $\by\mapsto\frac{1}{2t}\left\Vert \bx-\by\right\Vert _{2}^{2}$ is the rate function, and note that the tail condition (2.1.9) is satisfied by assumption (A3) in that $\sup_{\by\in\Rn} -J(\by) = -\inf_{\by\in\Rn} J(\by) = 0$.} 
\[
\liminf_{\substack{\epsilon\to0\\\epsilon>0}}S_{\epsilon}(\bx,t)\geqslant S_0(\bx,t).
\]

Step 3. Combining the two limits derived in steps 1 and 2 yield
\[
\lim_{\substack{\epsilon\to0\\\epsilon>0}}S_{\epsilon}(\bx,t)=S_0(\bx,t)
\]
for every $\bx\in\Rn$ and $t>0$, where convergence is uniform on every compact subset $(\bx,t)$ of $\Rn\times(0,+\infty)$ (\citep{rockafellar1970convex}, Theorem 10.8). 

By differentiability and joint convexity of both $(\bx,t) \mapsto S_0(\bx,t)$ and $(\bx,t)\mapsto S_{\epsilon}(\bx,t)-\frac{n\epsilon}{2}\ln t$ (Theorem~\ref{thm:e_u_nonvisc_hj} (i), and Theorem~\ref{thm:visc_hj_eqns} (i) and (ii)(a)), we can invoke (\citep{rockafellar1970convex}, Theorem 25.7) to get 
\[
\lim_{\substack{\epsilon\to0\\\epsilon>0}}\nabla_{\bx}S_{\epsilon}(\bx,t)=\nabla_{\bx}S_0(\bx,t)\,\mbox{and}\,\lim_{\substack{\epsilon\to0\\\epsilon>0}}\left(\frac{\partial S_{\epsilon}(\bx,t)}{\partial t}-\frac{n\epsilon}{2t}\right)=\lim_{\substack{\epsilon\to0\\
\epsilon>0}}\frac{\partial S_{\epsilon}(\bx,t)}{\partial t}=\frac{\partial S_0(\bx,t)}{\partial t},
\]
for every $\bx\in\Rn$ and $t>0$, where convergence is uniform on every compact subset of $\Rn\times(0,+\infty)$. Furthermore, the viscous HJ PDE~\eqref{eq:hj_visc_pde} for $S_\epsilon$ implies that
\begin{align*}
\lim_{\substack{\epsilon\to0\\
\epsilon>0
}
}\frac{\epsilon}{2}\Laplacian S_{\epsilon}(\bx,t) & =\lim_{\substack{\epsilon\to0\\
\epsilon>0
}
}-\left(\frac{\partial S_{\epsilon}(\bx,t)}{\partial t}+\frac{1}{2}\left\Vert \nabla_{\bx}S_{\epsilon}(\bx,t)\right\Vert ^{2}\right),\\
 & =-\left(\frac{\partial S_{0}(\bx,t)}{\partial t}+\frac{1}{2}\left\Vert \nabla_{\bx}S_0(\bx,t)\right\Vert ^{2}\right)\\
 & =0,
\end{align*}
where the last equality holds by the structure of the first-order HJ PDE~\ref{eq:hj_non_visc_pde} (see Theorem \ref{thm:e_u_nonvisc_hj}). Here, again, the limit holds for every $\bx\in\Rn$ and $t>0$, and convergence is uniform over any compact subset of $\Rn\times(0,+\infty)$. Finally, the limit $\lim_{\substack{\epsilon\to0\\\epsilon>0}} \bu_{PM}(\bx,t,\epsilon) = \bu_{MAP}(\bx,t)$ holds directly as a consequence to the limit $\lim_{\substack{\epsilon\to0\\\epsilon>0}}\nabla_{\bx}S_{\epsilon}(\bx,t)=\nabla_{\bx}S_0(\bx,t)$ and the representation formulas~\eqref{eq:grad_s_eps} and \eqref{eq:grad_map_rep} for the posterior mean and MAP estimates, respectively.
\section{Proof of Proposition~\ref{prop:topo_properties}} \label{app:topo}
We will prove that $\bu_{PM}(\bx,t,\epsilon)\in\inter(\dom J)$ in two steps. First, we will use the linearity of the projection operator \eqref{eq:proj_def} and the posterior mean estimate to prove by contradiction that $\bu_{PM}(\bx,t,\epsilon)\in\cl(\dom J)$. Second, we will use the following variant of the Hahn--Banach theorem for convex body in $\Rn$ to show in fact that $\bu_{PM}(\bx,t,\epsilon)\in\inter(\dom J)$.
\begin{thm} \label{thm:tech_res_2} (\citep{rockafellar1970convex}, Theorem 11.6 and Corollary 11.6.2)
Let $C$ be a convex set. A point $\bx\in C$ is a relative boundary point of $C$ if and only if there exist a vector $\boldsymbol{a}\in\Rn\backslash\{\boldsymbol{0}\}$ and a number $b\in\R$ such that 
\[
\bx=\argmax_{\by\in C}\left\langle \boldsymbol{a},\by\right\rangle +b,
\]
i.e., there exists an affine function on $C$ that is not identically constant and achieves its maximum over $C$ at $\bx$.
\end{thm}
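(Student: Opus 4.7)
The plan is to prove the two directions separately by relating the characterization to a proper supporting hyperplane at $\bx$. Write $\ell(\by) \coloneqq \langle \boldsymbol{a}, \by \rangle + b$ and let $L$ denote the linear subspace parallel to the affine hull of $C$, so that $\bx \in \ri C$ iff for every $\bv \in L$ there is some $\eta > 0$ with both $\bx + \eta \bv$ and $\bx - \eta \bv$ in $C$. I will also invoke the proper separation theorem for convex sets in $\Rn$ (Rockafellar, Theorem~11.3): two nonempty convex sets with disjoint relative interiors can be separated by a hyperplane that does not contain both of them.

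For the ``if'' direction, I would assume $\bx \in C$ maximizes $\ell$ over $C$ with $\ell$ not identically constant on $C$, and suppose for contradiction that $\bx \in \ri C$. For any $\bv \in L$, one may choose $\eta > 0$ with $\bx \pm \eta \bv \in C$; maximality of $\ell$ at $\bx$ then forces $\ell(\bx \pm \eta \bv) \leqslant \ell(\bx)$, hence $\langle \boldsymbol{a}, \bv \rangle = 0$. Therefore $\boldsymbol{a}$ annihilates $L$, so $\ell$ is constant on the affine hull of $C$ and in particular on $C$, contradicting the hypothesis. Thus $\bx \in \cl C \setminus \ri C$, i.e., $\bx$ is a relative boundary point.

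For the ``only if'' direction, I would start from $\bx \in \cl C \setminus \ri C$. The convex sets $\{\bx\}$ and $\ri C$ are disjoint, so the proper separation theorem produces $\boldsymbol{a} \in \Rn \setminus \{\boldsymbol{0}\}$ and $b \in \R$ with $\langle \boldsymbol{a}, \by \rangle + b \leqslant 0$ for every $\by \in \ri C$, equality at $\bx$, and strict inequality at some $\by_0 \in \ri C$. Passing to closures extends the inequality to all of $\cl C \supset C$. Hence $\ell$ attains its maximum over $C$ at $\bx$, and $\ell$ is non-constant since $\ell(\by_0) < \ell(\bx)$, which is exactly the required affine function.

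The main obstacle will be guaranteeing \emph{properness} of the separating hyperplane in the ``only if'' direction: a raw Hahn--Banach separation can yield a functional $\boldsymbol{a}$ whose restriction to the affine hull of $C$ is identically zero, which would violate the non-constancy requirement and make the hyperplane contain all of $C$. To circumvent this, I would first carry out the separation inside $\mathrm{aff}(C)$, where $\ri C$ is a convex set with nonempty interior relative to that affine hull, producing a non-trivial affine functional there, and then extend it linearly to all of $\Rn$ by any extension that preserves its values on $\mathrm{aff}(C)$. This yields the desired non-zero $\boldsymbol{a}$ and scalar $b$ with the required properties.
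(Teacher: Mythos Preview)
Your argument is correct and is essentially the standard proof of this supporting-hyperplane characterization (indeed, it is the argument behind Rockafellar's Theorem~11.6 and Corollary~11.6.2). Note, however, that the paper does not supply its own proof of this statement: it is quoted as a known result from \cite{rockafellar1970convex} and used as a black box in the proof of Proposition~\ref{prop:topo_properties}. So there is no ``paper's proof'' to compare against here; your write-up simply fills in what the paper cites.

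One small redundancy worth tightening: once you invoke Rockafellar's proper separation theorem (Theorem~11.3) for $\{\bx\}$ and $C$, properness is already part of the conclusion, since $\ri\{\bx\}=\{\bx\}$ is disjoint from $\ri C$ by hypothesis. Your final paragraph, which reduces to the affine hull to force non-triviality of $\boldsymbol{a}$ on $\mathrm{aff}(C)$, is effectively a sketch of how Theorem~11.3 itself is proved rather than an additional step you need on top of it. Either present it as a self-contained proof (dropping the citation to Theorem~11.3) or drop the last paragraph and rely on Theorem~11.3 directly; as written, the two overlap.
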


Step 1. Suppose $\bu_{PM}(\bx,t,\epsilon)\notin\cl(\dom J)$. Since the set $\cl(\dom J)$ is closed and convex, the projection of $\bu_{PM}(\bx,t,\epsilon)$ onto $\cl(\dom J)$ given by $\pi_{\mbox{cl}(\mbox{dom }J)}(\bu_{PM}(\bx,t,\epsilon)) \equiv \bar{\bu}$ is well-defined and unique (see Definition \ref{def:projections}), with $\bu_{PM}(\bx,t,\epsilon)\neq\bar{\bu}$ by assumption. It also satisfies the characterization \eqref{eq:proj_char}, namely
\[
\left\langle \bu_{PM}(\bx,t,\epsilon)-\bar{\bu},\by-\bar{\bu})\right\rangle \leqslant0
\]
for every $\by\in\cl(\dom J)$. However, by linearity of the posterior mean estimate,
\begin{equation*}
\left\Vert \bu_{PM}(\bx,t,\epsilon)-\bar{\bu}\right\Vert ^{2} = \expectationJ{\left\langle \bu_{PM}(\bx,t,\epsilon)-\bar{\bu},\by-\bar{\bu}\right\rangle)} \leqslant 0,
\end{equation*}
which implies that $\bu_{PM}(\bx,t,\epsilon) = \bar{\bu}$, in contradiction with the assumption that $\bu_{PM}(\bx,t,\epsilon)\notin\cl(\dom J)$.

Step 2. Suppose $\bu_{PM}(\bx,t,\epsilon)\notin\inter(\dom J)$. We know $\bu_{PM}(\bx,t,\epsilon)\in\cl(\dom J)$ by the first step, and therefore $\bu_{PM}(\bx,t,\epsilon)$ must be a boundary point of $\dom J$. If $J$ has no boundary point, then we get a contradiction and conclude $\bu_{PM}(\bx,t,\epsilon)\in\inter(\dom J)$. If not, Theorem \ref{thm:tech_res_2} applies and therefore there exist a vector $\boldsymbol{a}\in\Rn\backslash\{\boldsymbol{0}\}$ and a number $b\in\R$ such that $\bu_{PM}(\bx,t,\epsilon)=\argmax_{_{\by\in\cl(\dom J)}}\left\{\left\langle \boldsymbol{a},\by\right\rangle +b\right\}$, with $\left\langle \boldsymbol{a},\by\right\rangle +b<\left\langle \boldsymbol{a},\bu_{PM}(\bx,t,\epsilon)\right\rangle +b$ for every $\by\in\inter(\dom J)$. However, by linearity of the posterior mean estimate,
\begin{align*}
\left\langle \boldsymbol{a},\bu_{PM}(\bx,t,\epsilon)\right\rangle +b & = \expectationJ{\left\langle \boldsymbol{a},\by\right\rangle +b} \\
& < \expectationJ{\left\langle \boldsymbol{a},\bu_{PM}(\bx,t,\epsilon)\right\rangle +b}\\
& =\left\langle \boldsymbol{a},\bu_{PM}(\bx,t,\epsilon)\right\rangle +b,
\end{align*}
where the strict inequality in the second line follows from integrating over $\inter(\dom J)$. This gives a contradiction, and hence $\bu_{PM}(\bx,t,\epsilon)\in\inter(\dom J)$. 

As a consequence, the subdifferential of $J$ at $\bu_{PM}(\bx,t,\epsilon)$ is non-empty because the subdifferential $\partial J$ is non-empty at every point $\by \in \inter (\dom J)$ (\citep{rockafellar1970convex}, Theorem 23.4). Hence there exists a subgradient $\bp \in \partial J(u_{PM}(\bx,t,\epsilon))$ such that
\begin{equation*}
    J(\by)\geqslant J(\bu_{PM}(\bx,t,\epsilon))-\left\langle \bp,\by-\bu_{PM}(\bx,t,\epsilon)\right\rangle.
\end{equation*}
Taking the expectation $\expectationJ{\cdot}$ on both sides yield the inequality $J(\bu_{PM}(\bx,t,\epsilon)) \leqslant \expectationJ{J(\by)}$. Now, the convex inequality $1+z\leqslant e^{z}$ (which holds for every $z\in\R\cup\{+\infty\}$ with the understanding that $+\infty\leqslant+\infty$) with the choice of $z=J(\by)/\epsilon$ for $\by\in\Rn$ yield $J(\by)e^{-J(\by)/\epsilon}\leqslant\epsilon(1-e^{-J(\by)/\epsilon})$. After multiplying both sides by $\frac{1}{Z_J(\bx,t,\epsilon)}e^{-\frac{1}{2t\epsilon}\left\Vert \bx-\by\right\Vert _{2}^{2}}$ and integrating with respect to \textbf{$\by$}, we find $\expectationJ{J(\by)}\leqslant\epsilon\left(e^{S_\epsilon(\bx,t)/\epsilon}-1\right) < + \infty$.
\section{Proof of Proposition~\ref{prop:pm_rep_props}} \label{app:rep}
Proof of (i): Here, we suppose $\dom J = \Rn$. Let $D_J$ denote the set of points at which $J$ is continuously differentiable on $\Rn$, let $\nabla J(\by)$ denote the gradient of $J$ at these points, let $N_J$ denote the set of the points at which $J$ is not continuously differentiable on $\Rn$, and let $\bv \in \Rn\setminus\{\boldsymbol{0}\}$ by any nonzero vector in $\Rn$. We will derive the representation formulas~\eqref{eq:diff_pm_rep}~and~\eqref{eq:diff_var_rep} by showing that the divergence of the vector fields $\by \mapsto \by e^{-\left(\frac{1}{2t}\left\Vert \bx-\by\right\Vert _{2}^{2} + J(\by)\right)/\epsilon}$ and $\by \mapsto \bv e^{-\left(\frac{1}{2t}\left\Vert \bx-\by\right\Vert _{2}^{2} + J(\by)\right)/\epsilon} $ integrate to zero on $D_J$, i.e.,
\begin{equation}\label{eq:appB_int1}
\int_{D_J} \nabla_{\by} \cdot \left(\bv e^{-\left(\frac{1}{2t}\left\Vert \bx-\by\right\Vert _{2}^{2} + J(\by)\right)\epsilon}\right) \diff{\by} = 0.
\end{equation}
and
\begin{equation}\label{eq:appB_int2}
\int_{D_J} \nabla_{\by} \cdot \left(\by e^{-\left(\frac{1}{2t}\left\Vert \bx-\by\right\Vert _{2}^{2} + J(\by)\right)\epsilon}\right) \diff{\by} = 0.
\end{equation}
We will first assume that these equations hold and derive the representation formulas~\eqref{eq:diff_pm_rep}~and~\eqref{eq:diff_var_rep}, and we will then prove that equations~\eqref{eq:appB_int1} and~\eqref{eq:appB_int2} hold.

Suppose that equations \eqref{eq:appB_int1} and \eqref{eq:appB_int2} hold. First, write the integral in~\eqref{eq:appB_int1} as
\[\left \langle \bv,  \int_{D_J} \left(\frac{\by - \bx}{t} + \nabla J(\by)\right) e^{-\left(\frac{1}{2t}\left\Vert \bx-\by\right\Vert _{2}^{2} + J(\by)\right)/\epsilon} \diff{\by}  \right \rangle = 0.
\]
As $\bv$ is an arbitrary nonzero vector, the integral in the inner product is equal to zero. Since the minimal subgradient $\pi_{\partial J(\by)}(\boldsymbol{0}) = \nabla J(\by)$ everywhere on $D_J$, the set $D_J$ is dense in $\Rn$, and the $n$-dimensional Lebesgue measure of $N_J$ is zero (\cite{rockafellar1970convex}, Theorem 25.5), the gradient $\nabla J(\by)$ in this integral may be replaced with the minimal subgradient $\pi_{\partial J(\by)}(\boldsymbol{0})$ and the domain be taken to be $\Rn$ without changing the value of the integral. Hence, we have
\[
\int_{\Rn} \left(\frac{\by - \bx}{t} + \pi_{\partial J(\by)}(\boldsymbol{0})\right) e^{-\left(\frac{1}{2t}\left\Vert \bx-\by\right\Vert _{2}^{2} + J(\by)\right)/\epsilon} \diff{\by} = 0.
\]
Dividing through by the partition function $Z_J(\bx,t,\epsilon)$ and rearranging yield
\[
\bu_{PM}(\bx,t,\epsilon) = \bx -t\expectationJ{\pi_{\partial J(\by)}(\boldsymbol{0})},
\]
which is the representation formula~\eqref{eq:diff_pm_rep}. In particular, we find the representation formula $\nabla_{\bx}S_\epsilon(\bx,t) = \expectationJ{\pi_{\partial J(\by)}(\boldsymbol{0})}$ via Equation~\eqref{eq:grad_s_eps} in Theorem~\ref{thm:visc_hj_eqns}(iii). Next, write the integral in~\eqref{eq:appB_int2} as
\[\int_{D_J} \left(n\epsilon - \left \langle \by , \left(\frac{\by - \bx}{t} + \nabla J(\by)\right)\right \rangle \right) e^{-\left(\frac{1}{2t}\left\Vert \bx-\by\right\Vert _{2}^{2} + J(\by)\right)/\epsilon}  \diff{\by} = 0.
\]
As discussed previously, we may replace the gradient $\nabla J(\by)$ with the minimal subgradient $\pi_{\partial J(\by)}(\boldsymbol{0})$ and take the domain to be $\Rn$ in this integral without affecting its value. With these changes and on dividing through by the partition function $Z_J(\bx,t,\epsilon)$, we find
\[
\expectationJ{\left \langle \by , \left(\frac{\by - \bx}{t} + \pi_{\partial J(\by)}(\boldsymbol{0})\right)\right \rangle} = n\epsilon.
\]
We can re-write this as
\[
\expectationJ{\left\langle \by, \by-\bx \right\rangle + t\left\langle\bu_{PM}(\bx,t,\epsilon), \pi_{\partial J(\by)}(\boldsymbol{0})\right\rangle} = nt\epsilon -t\expectationJ{\left\langle \by-\bu_{PM}(\bx,t,\epsilon), \pi_{\partial J(\by)}(\boldsymbol{0}) \right\rangle},
\]
and we can write the left hand side as $\expectationJ{\normsq{\by-\bu_{PM}(\bx,t,\epsilon)}}$ using the representation formula $\expectationJ{\pi_{\partial J(\by)}(\boldsymbol{0})} = \left(\frac{\bx-\bu_{PM}(\bx,t,\epsilon)}{t}\right)$ derived previously, which gives the representation formula~\eqref{eq:diff_var_rep}. In particular, we find the representation formula $\Laplacian S_{\epsilon}(\bx,t) = \frac{1}{t\epsilon}\expectationJ{\left \langle \pi_{\partial J(\by)}(\boldsymbol{0}), \by - \bu_{PM}(\bx,t,\epsilon) \right \rangle}$ via Equation~\eqref{eq:exact_variance} in Theorem~\ref{thm:visc_hj_eqns}(iii).

We now establish the two equalities~\eqref{eq:appB_int1} and \eqref{eq:appB_int2}. To do so, we will use a measure-theoretic version of the divergence theorem due to \citet{pfeffer1990divergence} that will apply to the vector fields $\by e^{-\left(\frac{1}{2t\epsilon}\left\Vert \bx-\by\right\Vert _{2}^{2} + J(\by)\right)}$ and $\bv e^{-\left(\frac{1}{2t\epsilon}\left\Vert \bx-\by\right\Vert _{2}^{2} + J(\by)\right)}$.

Let $r>0$ and $\cballr \coloneqq \{\by \in \Rn \colon \left\Vert \by \right\Vert_2 \leqslant r\}$ denote the set of points in the open ball of radius $r$ centered at $\boldsymbol{0}$ in $\Rn$. As $\cballr$ is a bounded, convex, and open set on which the function $J$ is Lipschitz continuous (\cite{rockafellar1970convex}, Theorem 10.4), the set of points $N_J$ at which $J$ is not differentiable in the closed ball $\cl{\cballr}$ constitutes a $\sigma$-finite measurable set with respect to the $n-1$ dimensional Lebesgue measure (\cite{alberti1992singularities}, Theorem 4.1). Thanks to these properties, we can invoke (\cite{pfeffer1990divergence}, Theorem 4.14) to conclude that there exist two unit normal vectors $\boldsymbol{n}_{\by}$ and $\boldsymbol{n}_{\bv}$ such that
\[
\int_{D_J\cap~\cl{\cballr}} \nabla_{\by}\cdot\left(\bv e^{-(\frac{1}{2t}\normsq{\bx - \by} + J(\by))/\epsilon}\right) \diff\by = \int_{\bd\left(D_J\cap~\cl{\cballr}\right)} (\bv\cdot\boldsymbol{n}_{\bv})e^{-\left(\frac{1}{2t}\normsq{\bx - \by} + J(\by)\right)/\epsilon} \diff{\bS}
\]
and
\[
\int_{D_J\cap~\cl{\cballr}} \nabla_{\by}\cdot\left(\by e^{-(\frac{1}{2t}\normsq{\bx - \by} + J(\by))/\epsilon}\right) \diff\by = \int_{\bd\left(D_J\cap~\cl{\cballr}\right)} (\by\cdot\boldsymbol{n}_{\by})e^{-\left(\frac{1}{2t}\normsq{\bx - \by} + J(\by)\right)/\epsilon} \diff{\bS},
\]
where $\diff{S}$ denotes the $n-1$ dimensional Lebesgue measure. Take the limit $r \to +\infty$ on both sides to get
\[
\int_{D_J} \nabla_{\by}\cdot\left(\bv e^{-(\frac{1}{2t}\normsq{\bx - \by} + J(\by))/\epsilon}\right) \diff\by = \lim_{r\to+\infty} \int_{\bd\left(D_J\cap~\cl{\cballr}\right)} (\bv\cdot\boldsymbol{n_{\bv}})e^{-\left(\frac{1}{2t}\normsq{\bx - \by} + J(\by)\right)/\epsilon} \diff{\bS}
\]
and
\[
\int_{D_J} \nabla_{\by}\cdot\left(\by e^{-(\frac{1}{2t}\normsq{\bx - \by} + J(\by))/\epsilon}\right) \diff\by = \lim_{r\to+\infty}\int_{\bd\left(D_J\cap~\cl{\cballr}\right)} (\by\cdot\boldsymbol{n_{\by}})e^{-\left(\frac{1}{2t}\normsq{\bx - \by} + J(\by)\right)/\epsilon} \diff{\bS},
\]
The right hand side of these equations are equal to zero since the limits
\begin{equation*}
0 \leqslant \lim_{\left\Vert \by\right\Vert _{2}\to+\infty}\left\Vert (\bv\cdot \boldsymbol{n_{\bv}})e^{-\left(\frac{1}{2t}\left \Vert \bx - \by \right \Vert_{2}^{2} + J(\by)\right)/\epsilon}\right\Vert _{2} \leqslant \lim_{\left\Vert \by\right\Vert _{2}\to+\infty}\left\Vert \bv\right\Vert_{2} e^{-\left(\frac{1}{2t}\left \Vert \bx - \by \right \Vert_{2}^{2} \right)/\epsilon} = 0
\end{equation*}
and
\begin{equation*}
0 \leqslant \lim_{\left\Vert \by\right\Vert _{2}\to+\infty}\left\Vert (\by\cdot \boldsymbol{n_{\by}})e^{-\left(\frac{1}{2t}\left \Vert \bx - \by \right \Vert_{2}^{2} + J(\by)\right)/\epsilon}\right\Vert _{2} \leqslant \lim_{\left\Vert \by\right\Vert _{2}\to+\infty}\left\Vert \by\right\Vert_{2} e^{-\left(\frac{1}{2t}\left \Vert \bx - \by \right \Vert_{2}^{2} \right)/\epsilon} = 0
\end{equation*}
and a short calculation yield \[
\lim_{r\to+\infty} \int_{\bd(D_J\cap~\cl{\cballr})} (\bv\cdot\boldsymbol{n_{\bv}})e^{-\left(\frac{1}{2t}\left \Vert \bx - \by \right \Vert_{2}^{2} + J(\by)\right)/\epsilon}) \diff \bS= 0
\] 
and 
\[
\lim_{r\to+\infty} \int_{\bd\left(D_J\cap~\cl{\cballr}\right)} (\bv\cdot\boldsymbol{n_{\bv}})e^{-\left(\frac{1}{2t}\normsq{\bx - \by} + J(\by)\right)/\epsilon} \diff{\bS}.
\]
Hence Equations~\eqref{eq:appB_int1} and \eqref{eq:appB_int2}, and the proof of (i) is finished.

Proof of (ii): Let $\{\mu_k\}_{k=1}^{+\infty}$ be a sequence of positive real numbers converging to zero and $\by \in \dom J$. By Theorem~\ref{thm:e_u_nonvisc_hj}(i), the sequence of real numbers $\{S_0(\bx,\mu_k)\}_{k=1}^{+\infty}$ converges to $J(\by)$, and by assumption (A3) that $\inf_{\by \in \Rn} J(\by) = 0$ the sequence $\{S_0(\bx,\mu_k)\}_{k=1}^{+\infty}$ is bounded uniformly from below by $0$, i.e.,
\begin{align*}
S_0(\bx,\mu_k) & =\inf_{\by\in\Rn}\left\{\frac{1}{2\mu_k}\normsq{\bx-\by} + J(\by)\right\} \\
 & \geqslant\inf_{\by\in\Rn}\left\{\frac{1}{2\mu_k}\normsq{\bx-\by}\right\} +\inf_{\by\in\Rn}J(\by)\\
 & = 0,
\end{align*}
and therefore,
\begin{equation*}
\frac{1}{(2\pi t\epsilon)^{n/2}}\int_{\Rn}e^{-\left(\frac{1}{2t}\left\Vert \bx-\by\right\Vert _{2}^{2}+S_0(\bx,\mu_k)\right)/\epsilon}\diff\by \leqslant\frac{1}{(2\pi t\epsilon)^{n/2}}\int_{\Rn}e^{-\frac{1}{2t\epsilon}\left\Vert \bx-\by\right\Vert _{2}^{2}}\diff\by=1.
\end{equation*}
Using the Lebesgue dominated convergence theorem (\citep{folland2013real}, Theorem 2.24) and the limit result $\lim_{k\to+\infty}e^{-S_0(\bx,\mu_k)/\epsilon}=e^{-J(\by)/\epsilon}$ for every $\by\in\dom J$ from Theorem \ref{thm:e_u_nonvisc_hj}(i) (with $\lim_{k\to+\infty}e^{-S_0(\bx,\mu_k)/\epsilon}=0$ for every $\by\notin\dom J$), we find
\[
\lim_{k\to+\infty} -\epsilon\log\left(\frac{1}{(2\pi t\epsilon)^{n/2}}\int_{\Rn}e^{-\left(\frac{1}{2t}\left\Vert \bx-\by\right\Vert _{2}^{2}+S_0(\bx,\mu_k)\right)/\epsilon}\diff\by\right) = S_\epsilon(\bx,t).
\]
Thanks to the convexity of $\bx \mapsto S_\epsilon(\bx,t)$ proven in Theorem~\ref{thm:visc_hj_eqns}(ii)(a) as well as the representation formula~\eqref{eq:diff_pm_rep} derived in part (i) of this proof, we can invoke (\cite{rockafellar1970convex}, Theorem 25.7) to find
\[
\nabla_{\bx}S_\epsilon(\bx,t) = \lim_{k\to +\infty}\left(\frac{\int_{\Rn} \nabla_{\by}S_0(\by,\mu_k)e^{-\left(\frac{1}{2t}\normsq{\bx-\by} + S_0(\by,\mu_k)\right)/\epsilon}\diff\by}{\int_{\Rn} e^{-\left(\frac{1}{2t}\normsq{\bx-\by} + S_0(\by,\mu_k)\right)/\epsilon}\diff\by}\right).
\]
Finally, we can use formula~\eqref{eq:grad_s_eps} and the limit above to find
\[
\bu_{PM}(\bx,t,\epsilon) = \bx - t\lim_{k\to +\infty}\left(\frac{\int_{\Rn} \nabla_{\by}S_0(\by,\mu_k)e^{-\left(\frac{1}{2t}\normsq{\bx-\by} + S_0(\by,\mu_k)\right)/\epsilon}\diff\by}{\int_{\Rn} e^{-\left(\frac{1}{2t}\normsq{\bx-\by} + S_0(\by,\mu_k)\right)/\epsilon}\diff\by}\right).
\]
\section{Proof of Proposition~\ref{prop:mono_properties}} \label{app:mono}
Let $\{\mu_k\}_{k=1}^{+\infty}$ be a sequence of positive real numbers converging to zero and let $S_0\colon \Rn \times (0,+\infty) \to \R$ denote the solution to the first-order HJ PDE~\eqref{eq:hj_non_visc_pde} with initial data $J$. Define the function $F\colon\dom \partial J \times\dom \partial J\times\Rn\times(0,+\infty)\to[0,+\infty)$ as
\begin{equation*}
F(\by,\by_0,\bx,t)=
\left\langle \left(\frac{\by - \bx}{t} + \pi_{\partial J(\by)}(\boldsymbol{0})\right)-\left(\frac{\by_0 - \bx}{t} + \pi_{\partial J(\by_0)}(\boldsymbol{0})\right),\by-\by_{0}\right\rangle e^{-\left(\frac{1}{2t}\left \Vert \bx - \by \right \Vert_{2}^{2} + J(\by)\right)/\epsilon}
\end{equation*}
and the sequence of functions $\{F_{\mu_k}\}_{k=1}^{+\infty}$ with $F_{\mu_k}\colon\dom \partial J\times\dom \partial J\times\Rn\times(0,+\infty)\to[0,+\infty)$ as 
\[
F_{\mu_k}(\by,\by_0,\bx,t)=\left\langle \left(\frac{\by - \bx}{t} + \nabla_{\by}S_0(\by,\mu_k)\right)-\left(\frac{\by_0 - \bx}{t} + \nabla_{\by}S_0(\by_0,\mu_k)\right),\by-\by_{0}\right\rangle e^{-\left(\frac{1}{2t}\left \Vert \bx - \by \right \Vert_{2}^{2} + S_0(\by,\mu_k)\right)/\epsilon}.
\]
Since $\lim_{k\to +\infty} S_0(\by,\mu_k) = J(\by)$ and $\lim_{k\to +\infty} \nabla_{\by}S_0(\by,\mu_k) = \pi_{\partial J(\by)}(\boldsymbol{0})$ for every $\by \in \Rn$ by Theorem~\ref{thm:e_u_nonvisc_hj}(i) and (iv), the limit $\lim_{k \to +\infty} F_{\mu}(\by,\by_0,\bx,t) = F(\by,\by_0,\bx,t)$ holds for every $\by \in \dom\partial J$, $\by_0 \in \dom\partial J$, $\bx \in \Rn$ and $t > 0$. Moreover, the function $F$ and sequence of functions $\{F_{\mu_k}\}_{k=1}^{+\infty}$ are positive functions in their arguments because the strong convexity of both $\dom\partial J \ni \by\mapsto\frac{1}{2t}\left\Vert \bx-\by\right\Vert _{2}^{2}+J(\by)$ and $\dom\partial J \ni \by\mapsto\frac{1}{2t}\left\Vert \bx-\by\right\Vert _{2}^{2}+S_0(\by,\mu)$ with modulus $\left(\frac{1+mt}{t}\right)$ imply
\begin{equation}\label{eq:app_convex_ineq}
\left(\frac{1+mt}{t}\right)\normsq{\by-\by_{0}} \leqslant \left\langle \left(\frac{\by - \bx}{t} + \pi_{\partial J(\by)}(\boldsymbol{0})\right)-\left(\frac{\by_0 - \bx}{t} + \pi_{\partial J(\by_0)}(\boldsymbol{0})\right),\by-\by_{0}\right\rangle
\end{equation}
and
\[
\left(\frac{1+mt}{t}\right)\normsq{\by-\by_{0}} \leqslant \left\langle \left(\frac{\by - \bx}{t} + \nabla_{\by}S_0(\by,\mu_k)\right)-\left(\frac{\by_0 - \bx}{t} + \nabla_{\by}S_0(\by_0,\mu_k)\right),\by-\by_{0}\right\rangle.
\]
As a consequence, Fatou's lemma applies to the sequence of functions $\{F_{\mu_k}\}_{k=1}^{+\infty}$, and hence
\begin{equation*}
\begin{alignedat}{1}
    \int_{\dom \partial J}F(\by,\by_0,\bx,t) \diff\by &\leqslant \liminf_{k \to +\infty} \int_{\dom\partial J} F_{\mu_k}(\by,\by_0,\bx,t) \diff\by \\
    & \leqslant \liminf_{k \to +\infty} \int_{\Rn} F_{\mu_k}(\by,\by_0,\bx,t) \diff\by. \\
\end{alignedat}
\end{equation*}
By the representation formulas~\eqref{eq:diff_pm_rep} and \eqref{eq:diff_var_rep} derived in Proposition~\ref{prop:pm_rep_props} applied to the initial data $\by \mapsto S_0(\by,\mu)$,
\[
  \int_{\Rn} F_{\mu_k}(\by,\by_0\bx,t) \diff\by = \int_{\Rn} \left(n\epsilon - \left\langle \frac{\by_0 -\bx}{t} + \nabla_{\by}S_0(\by_0,\mu_k) , \by-\by_0 \right\rangle\right)e^{-(\frac{1}{2t}\normsq{\bx - \by} + S_0(\by,\mu_k))/\epsilon} \diff\by.
\]
A straightforward calculation using assumption (A3) yields $S_0(\by,\mu_k) \geqslant 0$ for every $k\in\mathbb{N}$, and it implies $\left\Vert \by-\by_0 \right\Vert_{2}e^{-(\frac{1}{2t}\normsq{\bx - \by} + S_0(\by,\mu_k))/\epsilon} \leqslant \left\Vert \by-\by_0 \right\Vert_{2} e^{-(\frac{1}{2t}\normsq{\bx - \by})/\epsilon}$, which is integrable over $\Rn$. Hence the Lebesgue dominated convergence theorem applies and since $\lim_{k \to +\infty} \nabla_{\by}S_0(\by,\mu_k) = \pi_{\partial J(\by)}(\boldsymbol{0})$ by Theorem~\ref{thm:e_u_nonvisc_hj}(iv), we get
\[
\liminf_{k \to +\infty} \int_{\Rn} F_{\mu_k}(\by,\bx,t) \diff\by = n\epsilon - \left\langle \frac{\by_0 -\bx}{t} + \pi_{\partial J(\by_0)}(\boldsymbol{0}), \int_{\dom\partial J}(\by-\by_0)e^{-(\frac{1}{2t}\normsq{\bx - \by} + J(\by))/\epsilon} \diff\by \right\rangle.
\]
Finally, combining this limit with the strong convexity inequality \eqref{eq:app_convex_ineq} and dividing through by the partition function $Z_J(\bx,t,\epsilon)$, we get inequality~\eqref{eq:monotonicity_prop}, which proves Proposition~\ref{prop:mono_properties}.
\section{Proof of Theorem~\ref{thm:bregman_div}}
Proof of (i): For every $\bu \in \Rn$, the Bregman divergence of $\dom\partial J\ni\by\mapsto\Phi_{J}(\by,\bx,t)$ at $(\bu,\varphi_{J}(\by,\bx,t))$ is given by
\begin{align*}
D_{\Phi_{J}}(\bu,\varphi_{J}(\by,\bx,t)) & =\Phi_{J}(\bu,\bx,t)-\left\langle \varphi_{J}(\by,\bx,t),\bu\right\rangle +\Phi_{J}^{*}(\varphi_{J}(\by,\bx,t))\\
 & =\Phi_{J}(\bu,\bx,t)-\Phi_{J}(\by,\bx,t)-\left\langle \varphi_{J}(\by,\bx,t),\bu-\by\right\rangle
\end{align*}
by definition of the subdifferential (see definition \ref{def:legendre_t}), where equality holds because $\varphi_J(\by,\bx,t) \in \partial \Phi_J(\by,\bx,t)$. Note that the expected value $\expectationJ{D_{\Phi_{J}}(\bu,\varphi_{J}(\by,\bx,t))}$ is finite because the expected value $\expectationJ{\left\langle \varphi_J(\by,\bx,t), \bu-\by \right\rangle}$ in the second line of the previous equation is finite thanks to the monotonicity property~\eqref{eq:monotonicity_prop} and finiteness of the mean minimal subgradient $\expectationJ{\pi_{\partial J(\by)}(\boldsymbol{0})}$ by Corollary~\ref{cor:mmsp}. Now, we have
\begin{equation*}
\begin{alignedat}{1}
\expectationJ{D_{\Phi_{J}}(\bu,\varphi_{J}(\by,\bx,t))} &= \Phi_{J}(\bu,\bx,t)-\left\langle \expectationJ{\varphi_{J}(\by,\bx,t)},\bu\right\rangle + \expectationJ{\Phi_{J}^{*}(\varphi_{J}(\by,\bx,t))}
\\
&= \Phi_{J}(\bu,\bx,t)+ \left\langle \nabla_{\bx}S_\epsilon(\bx,t) - \expectationJ{\pi_{\partial J(\by)}(\boldsymbol{0})},\bu\right\rangle + \expectationJ{\Phi_{J}^{*}(\varphi_{J}(\by,\bx,t))}
\\
&= \frac{1}{2t}\normsq{\bx-\bu} + \left(J(\bu) + \left\langle \nabla_{\bx}S_\epsilon(\bx,t) - \expectationJ{\pi_{\partial J(\by)}(\boldsymbol{0})},\bu\right\rangle\right) + \expectationJ{\Phi_{J}^{*}(\varphi_{J}(\by,\bx,t))}.
\end{alignedat}
\end{equation*}
Since $\bu \mapsto J(\bu) + \left\langle \nabla_{\bx}S_\epsilon(\bx,t) - \expectationJ{\pi_{\partial J(\by)}(\boldsymbol{0})},\bu\right\rangle$ is a convex function and $J \in \gmRn$ by assumption (A1), we can invoke Theorem~\ref{thm:e_u_nonvisc_hj}(ii) to conclude that $\bu \mapsto \expectationJ{D_{\Phi_{J}}(\bu,\varphi_{J}(\by,\bx,t))}$ has a unique minimizer $\bar{\bu}$ that satisfies the inclusion
\[
\left(\frac{\bx - \bar{\bu}}{t}\right) \in \partial J(\bar{\bu}) + \left(\nabla_{\bx}S_\epsilon(\bx,t) - \expectationJ{\pi_{\partial J(\by)}(\boldsymbol{0})}\right).
\].

Proof of (ii): If $\dom J = \Rn$, then the representation formula $\nabla_{\bx}S_\epsilon(\bx,t) = \expectationJ{\pi_{\partial J(\by)}(\boldsymbol{0})}$ derived in Proposition~\ref{prop:pm_rep_props} holds and the characterization of the minimizer $\bar{\bu}$ in equation~\eqref{eq:subdiff_rep1} reduces to $\left(\frac{\bx - \bar{\bu}}{t}\right)\in \partial J(\bar{\bu})$. The unique minimizer that satisfies this characterization is the MAP estimate $\bu_{MAP}(\bx,t)$ (Theorem~\ref{thm:e_u_nonvisc_hj}(ii)), i.e., $\bar{\bu} = \bu_{MAP}(\bx,t)$.

\clearpage{}
\bibliographystyle{plainnat}

\end{document}